\renewcommand{\PrintDOI}[1]{\doi{#1}}
\theoremstyle{plain}
\newtheorem{thm}{Theorem}[section]
\newtheorem{theorem}[thm]{Theorem}
\newtheorem{lemma}[thm]{Lemma}
\newtheorem{corollary}[thm]{Corollary}
\newtheorem{proposition}[thm]{Proposition}
\newtheorem{conjecture}[thm]{Conjecture}
\newtheorem*{claim*}{Claim}
\theoremstyle{definition}
\newtheorem{definition}[thm]{Definition}
\newtheorem{remark}[thm]{Remark}
\newcommand{\abs}[1]{\ensuremath{{\lvert {#1} \rvert}}}
\DeclareMathOperator{\field}{GF}
\DeclareMathOperator{\bw}{bw}
\DeclareMathOperator{\n}{N}
\begin{document}

\begin{frontmatter}[classification=text]

\title{Obstructions for Bounded Branch-depth\\ in Matroids} 

\author[jpg]{J.~Pascal Gollin\thanks{Supported by the Institute for Basic Science (IBS-R029-C1).}}
\author[kh]{Kevin Hendrey\footnotemark[1]}
\author[dm]{Dillon Mayhew\thanks{Supported by a Rutherford Discovery Fellowship, managed by the Royal Society Te Ap\={a}rangi.}}
\author[so]{Sang{-}il Oum\footnotemark[1]}

\begin{abstract}
    DeVos, Kwon, and Oum introduced the concept of branch-depth of matroids
    as a natural analogue of tree-depth of graphs. 
    They conjectured that a matroid of sufficiently large branch-depth contains the uniform matroid~$U_{n,2n}$ or the cycle matroid of a large fan graph as a minor. 
    We prove that matroids with sufficiently large branch-depth 
    either contain the cycle matroid of a large fan graph as a minor
    or have large branch-width.
    As a corollary, we prove their conjecture for matroids representable over a fixed finite field and quasi-graphic matroids, where the uniform matroid is not an option. 
\end{abstract}
\end{frontmatter}

\section{Introduction}
\label{sec:intro}

Motivated by the notion of tree-depth of graphs, 
DeVos, Kwon, and Oum~\cite{branchdepth} introduced the branch-depth\footnotemark\ 
of a matroid~$M$ as follows. 
\footnotetext{Note that there is also a different notion of branch-depth of a matroid introduced by Kardo\v{s}, Kr\'{a}l', Liebenau, and Mach~\cite{KKLM-depth}. 
The differences between these notions are discussed in~\cite{branchdepth}.}
Recall that the connectivity function~$\lambda_M$ of a matroid~$M$ is defined as ${\lambda_{M}(X) = r(X) + r(E(M) \setminus X) - r(E(M))}$, 
where~$r$ is the rank function of~$M$.
A \emph{decomposition} is a pair~${(T,\sigma)}$ consisting of a tree~$T$
with at least one internal node and a bijection~$\sigma$ from~${E(M)}$ to the set of leaves of~$T$. 
For an internal node~$v$ of~$T$, the \emph{width} of~$v$ is defined as 
\[
    \max_{\mathcal{P}' \subseteq \mathcal{P}_{v}} \lambda_{M} \left( \bigcup_{X \in \mathcal{P}'} X \right), 
\]
where~${\mathcal{P}_v}$ is the partition of~${E(M)}$ into sets induced by components of~${T - v}$ under~$\sigma^{-1}$. 
The \emph{width} of a decomposition~${(T,\sigma)}$ is defined as the maximum width of its internal nodes. 
The radius of~${(T,\sigma)}$ is the radius of~$T$. 
A decomposition is a \emph{$(k,r)$-decomposition} if its width is at most~$k$ and its radius is at most~$r$. 
The \emph{branch-depth} of a matroid~$M$ is defined 
to be the minimum integer~$k$ 
for which~$M$ admits a ${(k,k)}$-decomposition
if~${E(M)}$ has more than one element,
and is defined to be~$0$ otherwise. 

It is well known that graphs of large tree-depth contain a long path as a subgraph (see the book of Ne\v{s}et\v{r}il and Ossona de Mendez~\cite{NO2012}*{Proposition 6.1}). 
DeVos, Kwon, and Oum~\cite{branchdepth} made an analogous conjecture for matroid branch-depth as follows. 
Since the cycle matroid of a path graph has branch-depth at most~$1$, paths no longer are obstructions for small branch-depth. 
Instead, they use the cycle matroid of fans. 

The \emph{fan matroid}~${M(F_n)}$ is the cycle matroid of a fan~$F_n$, which is the union of a star~$K_{1,n}$ together with a path with~$n$ vertices through the leaves of the star, see Figure~\ref{fig:fan}. 
Note that the path with~${2n - 1}$ vertices is a fundamental graph of~${M(F_n)}$ (we will define the fundamental graph of a matroid with respect to a base in Section~\ref{sec:twisted}). 

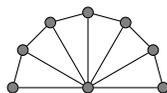
\begin{figure}[h]
    \centering
    \begin{tikzpicture}
        \tikzstyle{every node}=[circle,draw,fill=black!50,inner sep=0pt,minimum width=4pt]
        \node at(0,0)(v){};
        \draw (0:1) node (x0){} 
        \foreach \x in {1,...,6}{
            -- (30*\x:1) node (x\x){}
        };
        \foreach \x in {0,...,6}
            \draw (x\x)--(v);
    \end{tikzpicture}
    \caption{The fan~$F_7$. The fan matroid~${M(F_7)}$ is the cycle matroid of a fan~$F_7$.}\label{fig:fan}
\end{figure}

DeVos, Kwon, and Oum~\cite{branchdepth}*{Propositions~4.1,~4.2, and Lemma~5.17} showed that the fan matroid~${M(F_n)}$ has branch-depth~${\Theta(\log n/\log\log n)}$. 
Hence, fan matroids are indeed obstructions for bounded branch-depth. 

We write~$U_{n,2n}$ to denote the uniform matroid of rank~$n$ on~${2n}$ elements. 
Now, here is the conjecture. 

\begin{conjecture}[DeVos, Kwon, and Oum~\cite{branchdepth}]
    \label{conj:branchdepth}
    For every positive integer~$n$,
    there is an integer~$d$
    such that every matroid of branch-depth at least~$d$ 
    contains a minor isomorphic to~${M(F_n)}$ or~${U_{n,2n}}$. 
\end{conjecture}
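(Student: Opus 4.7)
The plan is to split the conjecture into two independent statements: statement~(A), that every matroid of sufficiently large branch-depth either contains $M(F_n)$ as a minor or has large branch-width, and statement~(B), that large branch-width in a matroid forces either $M(F_n)$ or $U_{n,2n}$ as a minor. Statement~(A) is an intrinsic result about branch-depth that does not use any representability assumption, and is where the main technical work lies. Statement~(B) should follow from existing structure theorems --- the matroid grid-minor theorem of Geelen, Gerards, and Whittle for matroids representable over a fixed finite field, and reductions to the graphic setting for quasi-graphic matroids. Combining (A) with (B) yields Conjecture~\ref{conj:branchdepth} in those settings; for the fully general unrestricted case, (B) is essentially the open matroid grid-minor conjecture, so I would be willing to settle for~(A) as the principal contribution.

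For~(A), I would argue contrapositively. Assume $M$ has branch-width at most some fixed~$k$, and pick a decomposition $(T,\sigma)$ witnessing this, so its width is at most~$k$. Since $M$ has very large branch-depth, no such decomposition can simultaneously have small radius, and so $T$ has a long root-to-leaf path $v_0 v_1 \cdots v_r$. Following this path produces a nested chain
\[
    \emptyset = X_0 \subsetneq X_1 \subsetneq \cdots \subsetneq X_r = E(M)
\]
with $\lambda_M(X_i) \le k$ for every~$i$. I would analyse this chain through the fundamental graph of $M$ with respect to a carefully chosen base $B$: each interface has bounded connectivity, so only a bounded number of basis elements are ``active'' across it. A Ramsey-style pigeonhole applied to a long sub-chain then gives a sub-sequence of levels whose interfaces are structurally isomorphic; picking one representative element $e_i \in X_{i+1}\setminus X_i$ per level that interacts uniformly with the interface, and then contracting and deleting to clean up, should leave a minor of $M$ whose fundamental graph is a path of length $2n-1$, that is, the fundamental graph of $M(F_n)$.

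The main obstacle will be realising this fan-extraction concretely. Low-order separations in matroids behave with considerably more latitude than low-order separations in graphs, so a long chain of width-$k$ separations can hide non-uniform behaviour that a naive pigeonhole will miss. The bulk of the technical work should go into isolating a robust \emph{path-like} or \emph{fan-like} sub-decomposition from any sufficiently long width-$k$ chain, and verifying that such a sub-decomposition genuinely supports $M(F_n)$ as a minor rather than only a looser path-like structure. Provided this can be made quantitative, Conjecture~\ref{conj:branchdepth} for representable and quasi-graphic matroids follows by pairing~(A) with the appropriate known instances of~(B).
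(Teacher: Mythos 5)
Your two-part reduction --- statement (A), that large branch-depth together with bounded branch-width forces an $M(F_n)$ minor, and statement (B), that large branch-width forces $M(F_n)$ or $U_{n,2n}$ --- is exactly the architecture of the paper: Theorem~\ref{thm:main-intro} is your~(A), and the paper explicitly records (Conjecture~\ref{conj:grid-theorem} and the corollary after it) that the full conjecture reduces to the matroid grid-minor conjecture. Your observation that (B) is essentially open in general and that (A) is the attainable contribution is also correct and matches the paper.

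Your proposed proof of (A) is not what the paper does, and it has a genuine gap precisely where you flag one. You fix one branch-decomposition, follow one long root-to-leaf path to get a nested chain $X_0 \subsetneq \cdots \subsetneq X_r$ of order-at-most-$w$ separations, and then pigeonhole on ``structurally isomorphic interfaces'' relative to a single carefully chosen base. For general matroids there is no canonical bounded type of a low-order interface relative to a fixed base, and the fundamental graph is extremely sensitive to the base; it is far from clear that any one base makes a long sub-chain of interfaces line up into an induced path. The paper never commits to a single base. It works with twisted matroids, whose central object is the lollipop (Definition~\ref{def:lollipop}): an induced path (stick) in the fundamental graph attached to a connected piece of high branch-depth (candy). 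Theorem~\ref{thm:lollipops} lengthens the stick by induction on its length. At each step, bounded branch-width gives a low-order separation between the set of anchor vertices and a still-high-branch-depth part of the candy (Lemma~\ref{lem:AXYseparation}, Lemma~\ref{lem:lollipop-bridge}); Lemma~\ref{lem:feasibility} turns the low connectivity into a circuit of size at most~$w$ hanging off one anchor; and a feasible twist (Corollary~\ref{cor:lollipop-twist}, Lemma~\ref{lem:nested-lollipops}) changes the base so this circuit becomes the fundamental circuit of that anchor, extending the stick into the new candy. These per-step base changes do the work your Ramsey step is meant to do, and I do not see how to dispense with them while holding both the chain and the base fixed; they are the mechanism, not a cosmetic convenience. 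To make your approach rigorous you would need, at minimum, a lemma controlling how the fundamental graph changes as you move along the chain for a suitably pivoted base, which is more or less where the lollipop machinery comes in.
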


Our main theorem verifies their conjecture for matroids of bounded branch-width as follows.
Note that~$U_{n,2n}$ has large branch-width if~$n$ is big and so there is no need for the following theorem to mention~$U_{n,2n}$. 

\begin{restatable}{theorem}{mainthmintro}
    \label{thm:main-intro}
    For positive integers~$n$ and~$w$, 
    every matroid of branch-depth at least~${3(3w)^{2n}}$ 
    contains a minor isomorphic to~${M(F_n)}$ or has branch-width more than~$w$. 
\end{restatable}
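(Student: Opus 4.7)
The plan is to prove the contrapositive: if $M$ has branch-width at most $w$ and does not contain $M(F_n)$ as a minor, then the branch-depth of $M$ is less than $3(3w)^{2n}$. Starting from a branch-decomposition $(T_0,\sigma_0)$ of $M$ of width at most $w$ with $T_0$ cubic (guaranteed by the branch-width assumption), I would aim to restructure $T_0$ into a (generally non-cubic) decomposition whose radius \emph{and} width are both at most $3(3w)^{2n}-1$. Equivalently, if no such restructuring succeeds, I want to read off an $M(F_n)$ minor directly from $T_0$.

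To locate a potential fan, I would root $T_0$ at a centroid and consider a root-to-leaf path $P$ of length $L\ge 3(3w)^{2n}$. The edges of $P$ give a strictly nested chain of separations $(A_1,B_1)\subsetneq(A_2,B_2)\subsetneq\dots\subsetneq(A_L,B_L)$, each of width at most $w$. At each vertex of $P$ the cubicity of $T_0$ leaves exactly one side-subtree dangling, and together with the width bound this tags the $i$th step of $P$ with a local ``separation type'' drawn from a set of size about $3w$ (the $3$ recording the cubicity). Iterating pigeonhole $2n$ times, each round collapsing length by a factor of at most $3w$, extracts a subsequence of length at least $L/(3w)^{2n}\ge 3$ on which the type is constant, i.e., a linked chain of separations of a single fixed type and length $\approx 2n$.

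From such a long linked chain I would construct $M(F_n)$ as a minor via the fundamental-graph and twisted-matroid machinery of Section~\ref{sec:twisted}. The idea is that a suitable base $B$ of $M$ restricts to bases of the $A_i$; the uniform separation type ensures that, at each step of the chain, a specific pivot edge in the fundamental graph connects a new element on one side to an old element on the other. Composing these pivots traces out an induced path in the fundamental graph of length $2n-1$, which is precisely a fundamental graph of $M(F_n)$, so a standard minor-extraction argument delivers $M(F_n)$ as a minor.

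The chief obstacle is this last step: showing that a linked chain of the specified length really forces $M(F_n)$ and not merely some looser substructure such as $U_{2,4}$ or a spike. One has to certify that the pivots do not introduce ``short-circuits'' destroying the fan's path shape, and this is also where I expect the doubling $2n$ in the exponent to arise, since each of the $2n-1$ elements of $F_n$ appears to need both a level of the chain to host it and a companion level to supply its pivot partner. I expect the twisted-matroid formalism from Section~\ref{sec:twisted} to do most of this bookkeeping, reducing the final step to a Ramsey-type argument on paths in fundamental graphs.
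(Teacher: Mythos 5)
Your high-level plan is genuinely different from the paper's. The paper never examines the radius of a branch-decomposition tree: it works with the branch-depth hypothesis directly, recursively building a chain of nested ``lollipop'' minors (Definition~\ref{def:lollipop}, Lemmas~\ref{lem:induction-start}, \ref{lem:lollipop-extension}, \ref{lem:nested-lollipops}), where each lollipop has a short induced path (the stick) attached to a connected piece of high branch-depth (the candy). Branch-width enters only once per induction level, via Lemma~\ref{lem:AXYseparation}, which balances a separation over the $\ell = 3w-2$ distinguished elements $z_1,\dots,z_\ell$; that separation is then fed into Lemma~\ref{lem:feasibility} to produce a small circuit whose least-indexed $z_i$ becomes the new end of the stick. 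The exponent $2n$ appears because the stick must have length $2n-1$ to realise the fundamental graph of $M(F_n)$ (Proposition~\ref{prop:path-to-fan}), and the factor $3w$ per level tracks the $3k+1$ threshold in Lemma~\ref{lem:AXYseparation} — so your intuition about where the bound's shape comes from is correct, even though the mechanism is not.

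The genuine gap is exactly the step you flag as ``the chief obstacle'', and it is not a loose end but the entire difficulty. A long root-to-leaf path in a width-$w$ branch-decomposition gives a nested chain of separations of order $<w$, and a pigeonhole on local ``types'' gives a constant-type subchain — but such a chain does not force $M(F_n)$. A long circuit $U_{m-1,m}$ or a long swirl has a width-$2$ path decomposition whose separations along the path all look identical, yet contains no $M(F_n)$ minor for moderate $n$; those matroids have branch-depth $1$ and so are harmless to the theorem, but they show that ``uniform-type nested chain'' alone is too weak an invariant. What rescues the paper is that the candy of each lollipop is certified to have high branch-depth (Lemma~\ref{lem:Xcomponent} keeps this invariant alive after deleting the $\le w$ circuit elements), and the pivot that adds a vertex to the stick is manufactured by Lemma~\ref{lem:feasibility}, not discovered by type-matching; Corollary~\ref{cor:lollipop-twist} and Proposition~\ref{prop:fundamental-graph-outside-twist} then guarantee the new stick really is an induced path, i.e.\ no ``short-circuit''. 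Your proposal never threads the branch-depth hypothesis through the construction — it enters only as the contrapositive target — so there is no invariant preventing the pigeonholed chain from collapsing to something fan-free. Without a substitute for the lollipop/feasibility machinery at that step, the argument does not close.
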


This allows us to obtain the following corollary for matroids representable over a fixed finite field, since we can use a well-known grid theorem for matroids of high branch-width by Geelen, Gerards, and Whittle~\cite{GGW:large-branchwidth}.

\begin{restatable}{corollary}{mainthmintrorepresentable}\label{cor:main-intro-representable}
    For every~positive integer $n$ and every finite field~$\field(q)$, 
    there is an integer~$d$
    such that every $\field(q)$-representable matroid with branch-depth at least~$d$ 
    contains a minor isomorphic to~${M(F_n)}$. 
\end{restatable}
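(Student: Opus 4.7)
The plan is to derive Corollary~\ref{cor:main-intro-representable} quickly from Theorem~\ref{thm:main-intro} together with the grid theorem for $\field(q)$-representable matroids of large branch-width due to Geelen, Gerards, and Whittle~\cite{GGW:large-branchwidth}. Theorem~\ref{thm:main-intro} reduces the problem to matroids whose branch-width is bounded in terms of $n$, and on such matroids the grid theorem forces structured minors, each of which I can show contains $M(F_n)$.

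Concretely, I would first invoke the grid theorem to choose an integer $w = w(n,q)$ so that every $\field(q)$-representable matroid of branch-width greater than $w$ has a minor isomorphic either to the graphic matroid $M(K_{n+1})$ or to a projective geometry $PG(k-1,\field(q'))$ of rank $k \geq n$, for some subfield $\field(q')$ of $\field(q)$. Setting $d = 3(3w)^{2n}$, any $\field(q)$-representable matroid $M$ of branch-depth at least $d$ either already has $M(F_n)$ as a minor by Theorem~\ref{thm:main-intro} (so we are done), or has branch-width exceeding $w$ and the grid theorem applies.

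In the latter case I would argue that each kind of minor guaranteed by the grid theorem contains $M(F_n)$. The fan $F_n$ has $n+1$ vertices and is a subgraph of $K_{n+1}$, so $M(F_n)$ is a minor of $M(K_{n+1})$. On the other hand, $M(F_n)$ is graphic and hence representable over every field, in particular over $\field(q')$; since it has rank $n$, it occurs as a restriction of any projective geometry $PG(k-1,\field(q'))$ with $k \geq n$.

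The main obstacle is really only to locate the Geelen--Gerards--Whittle grid theorem in the form appropriate for $\field(q)$-representable matroids, with a conclusion listing only graphic and projective-geometry minors; this is exactly the point at which the uniform matroid $U_{n,2n}$ appearing in Conjecture~\ref{conj:branchdepth} can be dropped, since it fails to be $\field(q)$-representable once $n$ is large relative to $q$. Given the correct statement of the grid theorem, the remaining verifications about $F_n \subseteq K_{n+1}$ and about $M(F_n)$ being a restriction of a projective geometry over any subfield are entirely routine.
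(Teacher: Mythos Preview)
Your overall strategy is exactly the paper's: combine Theorem~\ref{thm:main-intro} with the Geelen--Gerards--Whittle grid theorem for matroids without large uniform minors, and check that the structured minor it produces contains $M(F_n)$. The gap is in your statement of the grid theorem itself.

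The theorem the paper cites (Theorem~2.2 of~\cite{GGW:large-branchwidth}) asserts that a matroid with no $U_{2,q+2}$ or $U_{q,q+2}$ minor and sufficiently large branch-width contains the cycle matroid of the $n\times n$ grid as a minor. It does \emph{not} promise a minor isomorphic to $M(K_{n+1})$ or to a projective geometry, and in fact your version is false. The cycle matroid of a large planar grid is $\field(2)$-representable with branch-width as large as you like, yet it has no $M(K_5)$ minor (the grid is planar, so it has no $K_5$ minor as a graph) and no $PG(2,2)=F_7$ minor (grid matroids are graphic, and $F_7$ is not). So the very objects the grid theorem actually outputs are counterexamples to the form you quote.

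The repair is immediate and recovers precisely the paper's argument: take $w=w(n,q)$ from the correct grid theorem, set $d=3(3w)^{2n}$, and in the high-branch-width case obtain the cycle matroid of the $n\times n$ grid as a minor. Since $F_n$ is a planar graph on $n+1$ vertices, it is a minor of the $n\times n$ grid, hence $M(F_n)$ is a minor of the grid matroid and therefore of $M$. Your checks about $K_{n+1}$ and projective geometries, while correct in themselves, are then unnecessary.
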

Previously, Kwon, McCarty, Oum, and Wollan~\cite{obstructions}*{Corollary~4.9} verified the conjecture for binary matroids, 
as a corollary of their main result about vertex-minors and rank-depth. 

In a big picture, our proof follows the strategy of Kwon, McCarty, Oum, and Wollan~\cite{obstructions}. 
As the branch-width is small, we can find, in every large set, a large subset having small connectivity function value. 
We use that recursively to find a long path in a fundamental graph, which results in a minor isomorphic to the fan matroid. 

The paper is organized as follows. 
In Section~\ref{sec:prelims}, we will introduce our notations and a few results for matroids, branch-depth, and branch-width.
In Section~\ref{sec:twisted}, we will discuss the concept of twisted matroids introduced by Geelen, Gerards, and Kapoor~\cite{GGK2000}.
In Section~\ref{sec:proof}, we prove our main theorem, Theorem~\ref{thm:main-intro} by finding a `lollipop' inside a twisted matroid.
In Section~\ref{sec:consequences}, we prove its consequences for 
matroids representable over a fixed finite field 
and quasi-graphic matroids.

\section{Preliminaries}\label{sec:prelims}

\subsection{Set systems}

A \emph{set system~$S$} is a pair~${(E, \mathcal{P})}$ consisting  of a finite set~$E$ and a subset~$\mathcal{P}$ of the power set of~$E$. 
We call~$E$ the \emph{ground set of~$S$} and may denote it by~${E(S)}$. 

For ${i \in \{1,2\}}$ let ${S_i = (E_i, \mathcal{P}_i)}$ be set systems. 
A map ${\varphi \colon E_1 \to E_2}$ is an \emph{isomorphism between~$S_1$ and~$S_2$} if it is bijective and~${P \in \mathcal{P}_1}$ if and only if~${\varphi(P) \in \mathcal{P}_2}$. 
We say~$S_1$ and~$S_2$ are \emph{isomorphic} if there is such an isomorphism. 

Given two sets~$X$ and~$Y$, we denote by 
\[
    {X \triangle Y := (X \setminus Y) \cup (Y \setminus X)}
\]
the \emph{symmetric difference of~$X$ and~$Y$}. 

Given a set system~${S = (E, \mathcal{P})}$ and a subset~${X \subseteq E}$ we define 
\[
    \mathcal{P} \triangle X := \{ P \triangle X \colon P \in \mathcal{P} \} 
    \qquad \textnormal{ and } \qquad 
    \mathcal{P} | X := \{ P \subseteq X \colon P \in \mathcal{P} \}.
\]

Given an integer~$n$, we write~${[n]}$ for the set~${\{ i \in \mathbb{Z} \colon 1 \leq i \leq n \}}$ of positive integers up to~$n$.

\subsection{Matroids}

Whitney~\cite{whitney} introduced matroids. 
We mostly follow the notation in~\cite{oxley}. 

A \emph{matroid}~$M$ is a set system~${(E,\mathcal{B})}$ satisfying the following properties: 
\begin{enumerate}
    [label=(B\arabic*)]
    \item\label{axiom:B1} $\mathcal{B}$ is non-empty.
    \item\label{axiom:B2} For every ${B_1, B_2 \in \mathcal{B}}$ and every ${x \in B_1 \setminus B_2}$, there exists an element~$y \in B_2 \setminus B_1$ such that \linebreak ${(B_1 \setminus \{x\}) \cup \{y\} \in \mathcal{B}}$. 
\end{enumerate}

An element of~$\mathcal{B}$ is called a \emph{base} of~$M$. 
We denote the set of bases of a matroid~$M$ by~${\mathcal{B}(M)}$. 
A set~$X$ is \emph{independent} if it is a subset of a base, and we denote the set of independent sets of~$M$ by~${\mathcal{I}(M)}$. 
A set~$X$ is \emph{dependent} if it is not independent. 
A \emph{circuit} is a minimal dependent set, and we denote the set of circuits of~$M$ by~${\mathcal{C}(M)}$. 
The \emph{rank} of a set~$X$ in a matroid~$M$, denoted by~${r_M(X)}$, is defined as the size of a maximum independent subset of~$X$. 
We write~${r(M)}$ to denote~${r_M(E(M))}$, the \emph{rank} of~$M$. 
The rank function satisfies the \emph{submodular inequality}: 
for all~${X, Y \subseteq E(M)}$, 
\begin{equation}\label{eq:submodular}
    r_M(X) + r_M(Y) \geq r_M(X \cap Y) + r_M(X \cup Y).
\end{equation}

The \emph{dual matroid} of~$M$, denoted by~$M^*$, is the matroid on~${E(M)}$ where a set~$B$ is a base of~$M^*$ if and only if~${E(M) \setminus B}$ is a base of~$M$. 
It is well known that 
\[ 
    r_{M^*}(X) = r_M(E(M) \setminus X) + \abs{X} - r(M). 
\]
For a set~${X \subseteq E(M)}$, we write~${M \setminus X}$ for the matroid~${(E(M) \setminus X, \mathcal{B}')}$, 
where~$\mathcal{B}'$ is the set of maximal elements of~${\mathcal{I}(M)|(E(M) \setminus X)}$. 
This operation is called the \emph{deletion}. 
The \emph{contraction} is defined as~${M \slash X = (M^* \setminus X)^*}$. 
The \emph{restriction} is defined as~${M|X = M \setminus (E(M) \setminus X)}$. 
A matroid~$N$ is a \emph{minor} of a matroid~$M$
if~${N = M \setminus X \slash Y}$ for some disjoint subsets~$X$ and~$Y$ of~${E(M)}$. 
Note that if~$N$ is a minor of~$M$, then there are sets~${X, Y \subseteq E(M)}$, where~$X$ is independent in~$M^\ast$ and~$Y$ is independent in~$M$, such that~${N = M \setminus X \slash Y}$, see~\cite{oxley}*{Lemma~3.3.2}. 

The \emph{connectivity function}~$\lambda_M$ of a matroid~$M$ is defined as 
\[
    \lambda_M(X) = r_M(X) + r_M(E(M) \setminus X) - r(M). 
\]
It is easy to check that~${\lambda_{M}(X) = \lambda_{M^*}(X)}$. 

The connectivity function satisfies the following three inequalities. 

\begin{proposition}\label{prop:conn}
    Let~$M$ be a matroid. 
    \begin{enumerate}
        [label=(F\arabic*)]
        \item\label{item:f1} ${0 \leq \lambda_{M}(X) \leq \abs{X}}$ for all~${X \subseteq E(M)}$. 
        \item\label{item:f2} ${\lambda_{M}(X) = \lambda_{M}(E(M) \setminus X)}$ for all~${X \subseteq E(M)}$. 
        \item\label{item:f3} ${\lambda_{M}(X) + \lambda_{M}(Y)\geq \lambda_{M}(X \cap Y) + \lambda_{M}(X\cup Y)}$ for all~${X, Y \subseteq E(M)}$. 
    \end{enumerate}
\end{proposition}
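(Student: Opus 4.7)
The plan is to deduce all three properties directly from the submodular inequality~\eqref{eq:submodular} and the monotonicity bound $r_M(Y) \leq \min(|Y|, r(M))$ satisfied by the rank function.

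For~\ref{item:f1}, to get the lower bound I would apply submodularity of the rank to $X$ and $E(M) \setminus X$, noting that $X \cap (E(M) \setminus X) = \emptyset$ has rank~$0$ and $X \cup (E(M) \setminus X) = E(M)$ has rank $r(M)$; this gives $r_M(X) + r_M(E(M) \setminus X) \geq r(M)$, which is exactly $\lambda_M(X) \geq 0$. For the upper bound, I use $r_M(X) \leq |X|$ and $r_M(E(M) \setminus X) \leq r(M)$, giving $\lambda_M(X) \leq |X|$.

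Property~\ref{item:f2} is immediate from the defining formula, since the right-hand side of $\lambda_M(X) = r_M(X) + r_M(E(M) \setminus X) - r(M)$ is manifestly symmetric under the substitution $X \leftrightarrow E(M) \setminus X$.

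For~\ref{item:f3}, I would write $\lambda_M(X) + \lambda_M(Y) = r_M(X) + r_M(Y) + r_M(E(M) \setminus X) + r_M(E(M) \setminus Y) - 2r(M)$ and apply submodularity twice: once to $X$ and $Y$, and once to $E(M) \setminus X$ and $E(M) \setminus Y$. Using $(E(M) \setminus X) \cap (E(M) \setminus Y) = E(M) \setminus (X \cup Y)$ and $(E(M) \setminus X) \cup (E(M) \setminus Y) = E(M) \setminus (X \cap Y)$, the two inequalities combine to yield exactly $\lambda_M(X) + \lambda_M(Y) \geq \lambda_M(X \cap Y) + \lambda_M(X \cup Y)$ after subtracting $2r(M)$ from both sides.

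There is no genuine obstacle here; these are classical identities and the only thing to be careful about is correctly tracking complements when deriving~\ref{item:f3}, so that the two applications of rank submodularity combine into the stated submodular inequality for $\lambda_M$ on the matched pair $(X \cap Y, X \cup Y)$.
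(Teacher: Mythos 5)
Your proof is correct. The paper states this proposition without proof, treating it as a classical fact about the connectivity function, so there is no paper argument to compare against; your derivation is the standard one, deducing everything from submodularity and monotonicity of the rank function. All three parts check out: for \ref{item:f1} the lower bound via submodularity applied to $X$ and its complement (with $r_M(\emptyset)=0$) and the upper bound via $r_M(X)\leq\abs{X}$ and $r_M(E(M)\setminus X)\leq r(M)$; \ref{item:f2} by inspection of the symmetric formula; and \ref{item:f3} by two applications of submodularity, one to $X,Y$ and one to their complements, using De Morgan's laws to identify the resulting terms with $\lambda_M(X\cap Y)+\lambda_M(X\cup Y)$ after subtracting $2r(M)$.
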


A matroid~$M$ is \emph{connected} if~${\lambda_{M}(X) \neq 0}$ for all non-empty proper subsets~$X$ of~${E(M)}$. 
A \emph{component} of a matroid~$M$ with~${\abs{E(M)} \neq 0}$ is a minimal non-empty set~$X$ such that~${\lambda_M(X) = 0}$, and the empty set is the unique component of the empty matroid~${(\emptyset,\{\emptyset\})}$. 
So a matroid is connected if and only if it has exactly~$1$ component, namely its ground set. 
By a slight abuse of notation, if~$C$ is a component of~$M$, we may also refer to the matroid~${M|C}$ as a component of~$M$.

For a matroid ${M = (E, \mathcal{B})}$, a base~${B \in \mathcal{B}}$, and an element~${e \in E \setminus B}$, 
the \emph{fundamental circuit of~$e$ with respect to~$B$}, denoted by~$C_{M}(e,B)$,
is the circuit that is a subset of~${B \cup \{e\}}$.
It is straightforward to see that such a circuit exists and is unique. 

We omit the subscript~$M$ in~$r_{M}$,~$C_{M}$,~$\lambda_{M}$ if it is clear from the context.

\begin{lemma}
    \label{lem:connfunction}
    If~$N$ is a minor of a matroid~$M$ and~$X$ is a subset of~${E(N)}$, 
    then 
    \[
        \lambda_{M}(X) \leq \lambda_{M}(E(N)) + \lambda_{N}(X).
    \]
\end{lemma}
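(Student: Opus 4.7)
The plan is to verify the inequality by expanding all three connectivity values in terms of the rank function of $M$ and then invoking submodularity (inequality~\eqref{eq:submodular}) twice. Since $N$ is a minor of $M$, I would write $N = M \setminus D \slash C$ for disjoint $D, C \subseteq E(M)$ with $D \cup C = E(M) \setminus E(N)$, and recall the standard identity $r_N(Y) = r_M(Y \cup C) - r_M(C)$ for $Y \subseteq E(N)$, which in particular gives $r(N) = r_M(E(M) \setminus D) - r_M(C)$.

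Substituting into the definitions of $\lambda_M(X)$, $\lambda_M(E(N))$, and $\lambda_N(X)$, and using the set-theoretic identities $E(N) \cup C = E(M) \setminus D$ and $(E(N) \setminus X) \cup C = E(M) \setminus (D \cup X)$, the $r(M)$ terms on each side cancel, and the desired inequality reduces to
\[
r_M(X) + r_M(E(M) \setminus X) + r_M(C) + r_M(E(M) \setminus D)
\;\le\;
r_M(X \cup C) + r_M(E(M) \setminus (D \cup X)) + r_M(E(N)) + r_M(D \cup C).
\]
I would then obtain this by summing two instances of submodularity applied to $r_M$: first to the pair $X \cup C$ and $E(N)$, whose intersection is $X$ (since $X \subseteq E(N)$ while $C$ is disjoint from $E(N)$) and whose union is $E(M) \setminus D$; and second to the pair $E(M) \setminus (D \cup X)$ and $D \cup C$, whose intersection is $C$ (since $X$ is disjoint from $C$) and whose union is $E(M) \setminus X$. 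Adding the two resulting inequalities produces exactly the displayed bound.

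The only real difficulty is bookkeeping: spotting those two particular pairs of sets so that the six ``left-over'' rank terms match up with the four intersections and unions produced by submodularity. A more modular attempt that handles deletion and contraction in two separate steps does not combine cleanly here, because whichever order one applies the two cases (first $M \to M \slash C \to M \slash C \setminus D$, or first $M \to M \setminus D \to M \setminus D \slash C$), the resulting telescoped bound exceeds $\lambda_M(E(N))$ by the non-negative quantity $r_M(C) + r_M(D) - r_M(C \cup D)$. Hence a single direct argument via two well-chosen submodularity applications is the most economical route.
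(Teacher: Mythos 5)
Your proof is correct and is essentially identical to the paper's: the paper also expands $\lambda_M(E(N)) + \lambda_N(X)$ via $r_N(Y) = r_M(Y \cup C) - r_M(C)$ and applies submodularity to the very same two pairs — $X \cup C$ with $E(N)$ (there written $X \cup Y$ for $Y := E(N)\setminus X$), and $E(M)\setminus(D\cup X)$ (there $Y \cup C$) with $D \cup C$. Your closing observation that the two-step deletion/contraction telescoping overshoots by $r_M(C) + r_M(D) - r_M(C \cup D)$ is also accurate and explains why the paper's one-shot argument is needed.
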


\begin{proof}
    Let~${N = M \setminus D \slash C}$. 
    We may assume that~$C$ is independent.
    Let~${Z := C \cup D}$. 
    Let~${E := E(M)}$ and let~${Y := E(N) \setminus X}$.
    Let~${r := r_M}$ be the rank function of~$M$. 
    Then 
    \begin{itemize}
        \item ${r_N(X) = r(X \cup C) - r(C)}$, 
        \item ${r_N(Y) = r(Y \cup C) - r(C)}$, and 
        \item ${r_N(X\cup Y) = r(X \cup Y \cup C)-r(C)}$. 
    \end{itemize}
    Since~$r$ is submodular, we deduce the following:
    \begin{align*}
        \lefteqn{\lambda_M(Z) + \lambda_N(X)}\\
        &= r(Z) + r(X \cup Y) - r(E) + r(X \cup C) + r(Y \cup C) - r(X \cup Y \cup C) - r(C)\\
        &= \big( r(X \cup C) + r(X \cup Y) - r(X \cup Y \cup C) \big) + \big( r(Y \cup C) + r(Z) - r(C) \big) - r(E) \\ 
        &\geq r(X) + r(Y \cup Z) - r(E) = \lambda_M(X).\qedhere 
    \end{align*}
\end{proof}

\begin{lemma}
    \label{lem:connfunction2}
    If~$N$ is a minor of a matroid~$M$ and~$X$ is a subset of~${E(M)}$, 
    then 
    \[
        \lambda_{M}(X) \leq \lambda_{N}( X \cap E(N) ) + \abs{ E(M) \setminus E(N) } .
    \]
\end{lemma}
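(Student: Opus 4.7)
My plan is to prove this by induction on $k := \abs{E(M) \setminus E(N)}$. The base case $k = 0$ is immediate, since then $N = M$ and both sides of the inequality equal $\lambda_M(X)$.

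For the inductive step, I choose any $e \in E(M) \setminus E(N)$ and let $M' \in \{M \setminus e,\, M / e\}$ be the intermediate minor obtained by removing $e$ from $M$ via the same operation (deletion or contraction) used to form $N$. Then $N$ is a minor of $M'$ with $\abs{E(M') \setminus E(N)} = k - 1$, so the inductive hypothesis applied to $N$ as a minor of $M'$ yields
\[
    \lambda_{M'}(X \cap E(M')) \leq \lambda_N(X \cap E(N)) + (k-1).
\]
Hence it suffices to establish the one-step inequality
\[
    \lambda_M(X) \leq \lambda_{M'}(X \setminus \{e\}) + 1.
\]

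For this one-step inequality I would split into four cases according to whether $M' = M \setminus e$ or $M' = M / e$, and whether $e \in X$ or $e \notin X$. In each case, unfolding the definitions using $r_{M \setminus e}(A) = r_M(A)$ and $r_{M/e}(A) = r_M(A \cup \{e\}) - r_M(\{e\})$ writes $\lambda_M(X) - \lambda_{M'}(X \setminus \{e\})$ as a sum of two rank-increments of the form $r_M(A \cup \{e\}) - r_M(A)$ (one with sign $+$, one with sign $-$); since each such increment lies in $\{0, 1\}$, the total lies in $\{-1, 0, 1\}$ and in particular is at most~$1$. The two contraction cases can moreover be deduced from the deletion cases by duality, using $\lambda_M = \lambda_{M^*}$ and $(M / e)^* = M^* \setminus e$.

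The main conceptual obstacle I anticipate is explaining why one cannot simply concatenate Lemma~\ref{lem:connfunction} with the $1$-Lipschitz property $\abs{\lambda_M(A) - \lambda_M(B)} \leq \abs{A \triangle B}$: that combination would give
\[
    \lambda_M(X) \leq \lambda_M(X \cap E(N)) + \abs{X \setminus E(N)} \leq \lambda_M(E(N)) + \lambda_N(X \cap E(N)) + \abs{X \setminus E(N)},
\]
and since $\lambda_M(E(N))$ alone can already equal $\abs{E(M) \setminus E(N)}$, this would yield a bound of $2\abs{E(M) \setminus E(N)}$ rather than the sharp $\abs{E(M) \setminus E(N)}$. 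The element-by-element induction extracts the tight constant precisely because each individual deletion or contraction contributes at most~$1$ to the difference.
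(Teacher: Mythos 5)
Your proof is correct and takes essentially the same route as the paper's: reduce to the removal of a single element, then bound the one-step change in $\lambda$ by observing it is a difference of two rank increments each lying in $\{0,1\}$ (the paper phrases this via submodularity together with $r(\{e\}) \leq 1$). The only presentational difference is that the paper also invokes $\lambda_M(X) = \lambda_M(E(M) \setminus X)$ to collapse the $e \in X$ versus $e \notin X$ cases, so after the duality reduction it verifies a single case rather than two.
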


\begin{proof}
    It is enough to prove the inequality when~${\abs{E(M)} - \abs{E(N)} = 1}$.
    We may assume that~${N = M \setminus \{e\}}$ for some~${e \in E(M)}$ by taking the dual if necessary.
    By~\ref{item:f2} applied to~$\lambda_M$ and~$\lambda_{N}$, we may assume that~${e \in X}$.
    Let~$r$ be the rank function of~$M$.
    Observe that ${r(X \setminus \{e\}) + r(\{e\}) \geq r(X)}$ by equation~\eqref{eq:submodular} and~${r(E(M) \setminus \{e\}) \leq r(E(M))}$. 
    Thus 
    \begin{align*}
        \lambda_M(X) 
        &= r(X) + r(E(M) \setminus X) - r(E(M))\\
        &\leq r(X \setminus \{e\}) + r(\{e\}) + r(E(M) \setminus X) - r(E(M) \setminus \{e\})\\
        &\leq \lambda_N(X\setminus\{e\}) + 1. \qedhere
    \end{align*}
\end{proof}

\subsection{Branch-depth}\label{subsec:branchdepth}

We will use the following lemma of DeVos, Kwon, and Oum~\cite{branchdepth}.
Here we state it for matroids.

\begin{lemma}[DeVos, Kwon, and Oum~\cite{branchdepth}*{Lemma~2.3}]
    \label{lem:component}
    Let~${m}$ be a non-negative integer, 
    let~$M$ be a matroid, 
    and let~${\{ E_i \colon i \in [m] \}}$ be a partition of~${E(M)}$ into non-empty sets 
    such that~${\lambda_{M}(E_i) = 0}$ for all~${i \in [m]}$. 
    Let~$k_i$ be the branch-depth of~${M|E_i}$ for~${i \in [m]}$, 
    and let~${k := \max \{ k_i \colon i \in [m] \}}$. 
    Then the branch-depth of~$M$ is~$k$ or~${k + 1}$.
    Moreover, if the branch-depth of~$M$ is~${k+1}$, then it has 
    a~${(k,k+1)}$-decomposition. 
\end{lemma}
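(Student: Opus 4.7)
The plan is to establish that the branch-depth of $M$ is at least $k$ and at most $k+1$, with the upper bound argument explicitly yielding a $(k, k+1)$-decomposition.

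For the upper bound, I would construct a decomposition of $M$ by joining decompositions of the restrictions $M|E_i$ at a new central node. For each $i$ with $|E_i| \geq 2$, pick a $(k_i, k_i)$-decomposition $(T_i, \sigma_i)$ of $M|E_i$ and a central node $c_i$ of $T_i$; for each singleton $E_i = \{e\}$, take $T_i$ to be a single leaf labeled by $e$ with $c_i$ this same vertex. Introducing a new node $v$ adjacent to every $c_i$ yields a tree $T$ with the obvious leaf bijection $\sigma$, and its radius is at most $k+1$. To verify the width is at most $k$, note first that the partition $\mathcal{P}_v = \{E_1, \ldots, E_m\}$ has every subunion of connectivity $0$, by iterated submodularity (Proposition~\ref{prop:conn}~(F3)) together with $\lambda_M(E_i) = 0$. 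For an internal node $u$ inherited from some $T_i$, the partition $\mathcal{P}_u^T$ differs from $\mathcal{P}_u^{T_i}$ only in that the part lying on the $c_i$-side is enlarged by $E(M) \setminus E_i$; using the identity $\lambda_M(X) = \lambda_{M|E_i}(X)$ for $X \subseteq E_i$ (a standard consequence of $\lambda_M(E_i) = 0$ and the resulting rank direct-sum decomposition) and passing to complements via (F2) whenever the enlarged part is included, the width at $u$ stays bounded by $k_i \leq k$.

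For the lower bound, given any $(d, d)$-decomposition $(T, \sigma)$ of $M$ and any $i$ with $|E_i| \geq 2$, I would take $T_i$ to be the minimal subtree of $T$ containing $\sigma(E_i)$ together with $\sigma_i := \sigma|_{E_i}$; this is a standard restriction-to-subtree operation preserving the radius bound $d$. The partition at any internal node $u$ of $T_i$ is the trace on $E_i$ of $\mathcal{P}_u^T$, so a typical subunion has the form $Y \cap E_i$ where $Y$ is a union of parts of $\mathcal{P}_u^T$. Applying submodularity (F3) to $Y$ and $E_i$, combined with $\lambda_M(E_i) = 0$, gives $\lambda_M(Y \cap E_i) \leq \lambda_M(Y) \leq d$, which equals $\lambda_{M|E_i}(Y \cap E_i)$. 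Hence $k_i \leq d$ for every $i$, so $k \leq$ branch-depth of $M$. The two bounds together prove the statement, and the decomposition constructed in the upper bound argument proves the moreover.

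The main obstacle is bookkeeping around degenerate cases rather than conceptual difficulty: ensuring the combined tree has at least one internal node when $m \geq 2$ (which holds since $v$ has degree $m$, with the case $m = 1$ trivial as $M = M|E_1$), handling singleton $E_i$ by direct attachment of the element to $v$, and verifying that the minimal-subtree construction for the lower bound gives a valid decomposition (e.g.\ that internal nodes of $T_i$ are internal in $T$). Once these are addressed, the argument reduces to two applications of submodularity together with the direct-sum identity $\lambda_M(X) = \lambda_{M|E_i}(X)$ for $X \subseteq E_i$.
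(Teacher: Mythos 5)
The paper only \emph{cites} this lemma from DeVos, Kwon, and Oum~\cite{branchdepth} and does not reproduce a proof, so there is no in-paper argument to compare against. Assessed on its own merits, your proposal is correct and is, in essence, the natural argument. For the upper bound, joining $(k_i,k_i)$-decompositions of the $M|E_i$'s at a fresh central node $v$ gives radius at most $k+1$, width $0$ at $v$ by iterated submodularity, and width at most $k_i$ at nodes inherited from $T_i$ (using $\lambda_M(X)=\lambda_{M|E_i}(X)$ for $X\subseteq E_i$ and (F2) for unions containing the outside-enlarged part); one should read the phrase ``the part on the $c_i$-side is enlarged'' as instead adding $E(M)\setminus E_i$ as a brand-new part when $u=c_i$, but your complement argument handles this case identically. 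For the lower bound, restricting a $(d,d)$-decomposition of $M$ to the minimal subtree on $\sigma(E_i)$ and using $\lambda_M(Y\cap E_i)\le\lambda_M(Y)+\lambda_M(E_i)=\lambda_M(Y)$ correctly yields $k_i\le d$. The bookkeeping caveats you flag (the case $m=1$, singleton parts, internality of inherited nodes) are the right ones, and they all check out.
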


\begin{lemma}\label{lem:Xcomponent}
    Let~$m$ be a non-negative integer. 
    Let~$M$ be a 
    matroid of branch-depth~$m$ 
    and let~$X$, $Y$ be disjoint subsets of~${E(M)}$ such that~${X \cup Y \neq \emptyset}$. 
    Then~${M \setminus X \slash Y}$ has a component of branch-depth at least~${m - \abs{X} - \abs{Y}}$. 
\end{lemma}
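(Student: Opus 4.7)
The plan is to induct on $|X| + |Y|$, reducing to the case of a single deletion or contraction and then extending a decomposition of $M \setminus e$ (or $M/e$) by attaching one new leaf. The contraction case follows from the deletion case applied to $M^*$, since $M/e = (M^* \setminus e)^*$, duality preserves $\lambda$ (and hence branch-depth and components), and $(N|C)^* = N^*|C$ for $C$ a union of components of $N$. For the base case $|X| + |Y| = 1$ with $e \in X$, I want to show that $M \setminus e$ has a component of branch-depth at least $m - 1$. The cases $m \leq 1$ or $|E(M)| \leq 2$ are immediate, so assume $m \geq 2$ and $|E(M \setminus e)| \geq 2$. Suppose for contradiction every component of $M \setminus e$ has branch-depth at most $m - 2$; then Lemma~\ref{lem:component} yields a decomposition $(T, \sigma)$ of $M \setminus e$ of width at most $m - 2$ and radius at most $m - 1$.

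From $(T, \sigma)$ I construct a $(m-1, m-1)$-decomposition $(T', \sigma')$ of $M$ by choosing a center $c$ of $T$ (necessarily internal, as $T$ has at least three vertices), attaching a new leaf $\ell_e$ to $c$, and setting $\sigma'(e) = \ell_e$ while $\sigma' = \sigma$ on $E(M \setminus e)$. The tree $T'$ has radius at most $m - 1$, since $c$ has eccentricity $\max(\mathrm{ecc}_T(c), 1) \leq m - 1$ in $T'$. For the width, at every internal node $v$ of $T'$ and every union $U$ of parts of the partition at $v$, the set $U \setminus \{e\}$ is a union of parts of the corresponding partition at $v$ in $T$ (at $v = c$, the fresh singleton $\{e\}$ just trims to $\emptyset$). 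Lemma~\ref{lem:connfunction2} then gives
\[
    \lambda_M(U) \leq \lambda_{M \setminus e}(U \setminus \{e\}) + 1 \leq (m - 2) + 1 = m - 1,
\]
contradicting the assumption that the branch-depth of $M$ equals $m$.

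For the inductive step $|X| + |Y| \geq 2$, pick $e \in X \cup Y$; say $e \in X$ (the case $e \in Y$ is analogous using $M/e$). By the base case, $M \setminus e$ has a component $C$ of branch-depth at least $m - 1$. Since $M \setminus e$ splits as the direct sum $(M \setminus e)|C \oplus (M \setminus e)|(E(M) \setminus (C \cup \{e\}))$ and deletion and contraction distribute over direct sum, one summand of $M \setminus X / Y$ is $((M \setminus e)|C) \setminus X_1 / Y_1$ with $X_1 := (X \setminus \{e\}) \cap C$ and $Y_1 := Y \cap C$. If $X_1 \cup Y_1 = \emptyset$, then $(M \setminus e)|C$ is itself a component of $M \setminus X / Y$ of branch-depth at least $m - 1 \geq m - |X| - |Y|$; otherwise the inductive hypothesis applied to $(M \setminus e)|C$ (of branch-depth at least $m - 1$) with $X_1, Y_1$ (satisfying $|X_1| + |Y_1| \leq |X| + |Y| - 1$) produces a component of branch-depth at least $(m - 1) - |X_1| - |Y_1| \geq m - |X| - |Y|$, which remains a component in the direct sum $M \setminus X / Y$. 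The most delicate point is the width verification in the base case: even at $c$, where the partition gains the singleton $\{e\}$, Lemma~\ref{lem:connfunction2} is exactly what keeps every part-union's connectivity at most $m - 1$.
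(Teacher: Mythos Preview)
Your argument is correct. The route, however, differs from the paper's: you induct on $|X|+|Y|$, reducing everything to the single-element case, and there you start from a $(m-2,m-1)$-decomposition of $M\setminus e$ (obtained via Lemma~\ref{lem:component}) and append one leaf at a centre, invoking only Lemma~\ref{lem:connfunction2} for the width bound. The paper instead handles all of $X\cup Y$ in one shot: it takes $(d,d)$-decompositions of the components of $M\setminus X/ Y$ with $d=m-|X\cup Y|-1$, joins them at a fresh root $r$, and hangs \emph{all} elements of $X\cup Y$ as leaves at $r$; the width at $r$ is bounded by Lemma~\ref{lem:connfunction2}, and at every other internal node by Lemma~\ref{lem:connfunction}. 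The paper's construction is shorter and avoids the duality bookkeeping and the direct-sum chase in your inductive step; your approach, on the other hand, never needs Lemma~\ref{lem:connfunction} and makes the single-element case reusable on its own. One small point worth tightening in your write-up: the inductive hypothesis is stated for matroids of branch-depth \emph{exactly} $m$, so when you apply it to $(M\setminus e)|C$ you should name its actual branch-depth $m'\ge m-1$ and then note $m'-|X_1|-|Y_1|\ge m-|X|-|Y|$, rather than writing ``of branch-depth at least $m-1$'' as if that were the hypothesis.
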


\begin{proof}
    We follow the idea of \cite{obstructions}*{Lemma 2.6}. 
    If~${\abs{X \cup Y} \geq m}$, then there is nothing to prove. 
    So we may assume that~${0 < \abs{X \cup Y} < m}$. 

    Let~${d := m - \abs{X \cup Y} - 1}$, and suppose that every component of~${M \setminus X \slash Y}$ has 
    branch-depth at most~$d$. 
    Let~${\{ C_i \colon i \in [t] \}}$ be the set of components of~${M \setminus X \slash Y}$. 
    For each~${i \in [t]}$, 
    if~${\abs{C_i} \geq 2}$, then let~${(T_i,\sigma_i)}$ be 
    a~${(d,d)}$-decomposition
    with a node~$r_i$ of~$T_i$ having distance at most~$d$ 
    to every node of~$T_i$. 
    If~${\abs{C_i} = 1}$, then we let~$T_i$ be the one-node graph on~${\{r_i\}}$ and take~${\sigma_i \colon C_i \to \{r_i\}}$. 

    We construct a decomposition~${(T,\sigma)}$ of~$M$ 
    by letting~$T$ be a tree obtained from the disjoint union of all~$T_i$'s 
    by adding a new node~$r$ and adding edges~${rr_i}$ for all~${i \in [t]}$, letting~$\sigma$ map~${v \in C_i}$ to~${\sigma_i(v)}$, and appending~${\abs{X \cup Y}}$ leaves to~$r$ and 
    letting~$\sigma$ map each element of~${X \cup Y}$ to a distinct leaf attached to~$r$. 
    Then~${(T,\sigma)}$ has radius at most~${d + 1}$, which is strictly less than $m$ since $\abs{X\cup Y}\neq 0$. 
    Furthermore,~$r$ has width at most~${\abs{X \cup Y} \leq m - 1}$ by Lemma~\ref{lem:connfunction2}, and all other internal nodes of $T$ have width at most~${d + \abs{X \cup Y} = m - 1}$ 
    by Lemma~\ref{lem:connfunction}. 
    This contradicts our assumption that~$M$ has branch-depth~$m$. 
    Thus we conclude that~${M \setminus X \slash Y}$ has a component inducing a matroid of branch-depth at least~${d+1 = m - \abs{X} - \abs{Y}}$. 
\end{proof}

\begin{lemma}\label{lem:XYcomponents}
    Let~$m$ and~$k$ be non-negative integers, 
    let~$M$ be a matroid, and let~$N_1$ and~$N_2$ be minors of~$M$ such that~${(E(N_1),E(N_2))}$ is a partition of~${E(M)}$ 
    and~${\lambda_{M}(E(N_1)) \leq k}$.
    If all components of both~$N_1$ and~${N_2}$ have branch-depth at most~$m$,
    then~$M$ has branch-depth at most~${\max(m+k,m+2)}$.
\end{lemma}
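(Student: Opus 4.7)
The plan is to build a decomposition of $M$ by stitching together $(m,m)$-decompositions of the components of $N_1$ and $N_2$ through two new internal nodes. First I dispose of degenerate cases: if $E(N_1)=\emptyset$ then $N_2 = M$ and every component of $M$ has branch-depth at most $m$, so Lemma~\ref{lem:component} already gives that the branch-depth of $M$ is at most $m+1 \leq \max(m+k,m+2)$, and the case $E(N_2)=\emptyset$ is symmetric. So assume both are non-empty, with components $A_1,\ldots,A_s$ of $N_1$ and $B_1,\ldots,B_t$ of $N_2$. For each component of size at least two, fix an $(m,m)$-decomposition whose root is an internal node of eccentricity at most $m$; for singleton components use a single-node tree. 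I form $T$ by taking the disjoint union of all these trees, adding two new nodes $v_1,v_2$ joined by an edge, with $v_1$ further adjacent to each root $r_i^{(1)}$ on the $N_1$ side and $v_2$ adjacent to each root $r_j^{(2)}$ on the $N_2$ side; let $\sigma$ be the union of the component bijections.

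The radius of $(T,\sigma)$ is at most $m+2$, since from $v_1$ any leaf of a $T_i^{(1)}$ is within distance $m+1$ and any leaf of a $T_j^{(2)}$ within distance $m+2$. For the width at $v_1$, the partition $\mathcal{P}_{v_1}$ is $\{A_1,\ldots,A_s,E(N_2)\}$, so any subfamily's union is either $\bigcup_{j\in J}A_j$ or has complement of that form; by~\ref{item:f2} it suffices to bound $\lambda_M\bigl(\bigcup_{j\in J}A_j\bigr)$. Such a set is a union of components of $N_1$, hence $\lambda_{N_1}$-null, so Lemma~\ref{lem:connfunction} applied with $N=N_1$ gives $\lambda_M\bigl(\bigcup_{j\in J}A_j\bigr)\le\lambda_M(E(N_1))\le k$. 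The same bound holds at $v_2$. This is where the choice of two centers rather than one matters: with a single center, the partition would mix $A_i$'s and $B_j$'s, and submodularity would only yield a bound of $2k$, which could exceed $\max(m+k,m+2)$.

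The bulk of the work lies with internal nodes $v$ inside the component trees. Fix $v$ in some $T_i^{(1)}$. In the combined tree, $\mathcal{P}_v$ consists of the subsets of $A_i$ corresponding to subtrees of $T_i^{(1)}-v$ that avoid $r_i^{(1)}$, together with a single ``root-side'' part that additionally absorbs $E(M)\setminus A_i$. For any subfamily whose union is a subset $X\subseteq A_i$, the original $(m,m)$-decomposition gives $\lambda_{N_1|A_i}(X)\le m$, and two applications of Lemma~\ref{lem:connfunction} chain as follows: first, with $N=N_1$, one obtains $\lambda_M(A_i)\le\lambda_M(E(N_1))+\lambda_{N_1}(A_i)\le k$; then, with $N=N_1|A_i$, one obtains $\lambda_M(X)\le\lambda_M(A_i)+\lambda_{N_1|A_i}(X)\le k+m$. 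Subfamilies containing the root-side part have complementary union of the first type, handled again by~\ref{item:f2}. The same bound applies inside each $T_j^{(2)}$. Combining, $(T,\sigma)$ has width at most $m+k$ and radius at most $m+2$, hence is a $(\max(m+k,m+2),\max(m+k,m+2))$-decomposition of $M$. I expect this last step, controlling the internal-node widths, to be the main technical obstacle, precisely because the root-side piece mixes part of $A_i$ with all of $E(M)\setminus A_i$; it is the chained application of Lemma~\ref{lem:connfunction} through the intermediate minor $N_1|A_i$, combined with~\ref{item:f2} to swap a set with its complement, that keeps the bound at $m+k$ rather than letting it blow up.
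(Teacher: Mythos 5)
Your proof is correct and takes essentially the same approach as the paper. The paper invokes Lemma~\ref{lem:component} to get $(m,m+1)$-decompositions of $N_1$ and $N_2$ and joins their roots through a single new node~$r$, whereas you inline that step by attaching the component decompositions directly to two hubs $v_1,v_2$ — this yields the paper's tree with the degree-two node $r$ suppressed — and both arguments rest on Lemma~\ref{lem:connfunction} (applied once or in a short chain) to bound the internal widths by $m+k$.
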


\begin{proof}
    By Lemma~\ref{lem:component}, both~${N_1}$ and~${N_2}$ have branch-depth at most~${m + 1}$ 
    and if any of them has branch-depth equal to~${m + 1}$, then it has a 
    ${(m,m+1)}$-decomposition.
    
    For~${i \in [2]}$, if~${\abs{E(N_i)} > 1}$, then let~${(T_i,\sigma_i)}$ be a ${(m,m+1)}$-decomposition of~${N_i}$ 
    and let~$r_i$ be a node of~$T_i$ within distance~${m+1}$ from every node of~$T_i$. 
    If~${\abs{E(N_i)} = 1}$, then 
    let~${(T_i,\sigma_i)}$ be the one-node tree on~${\{ r_i \}}$ and take~${\sigma_i \colon E(N_i) \to \{ r_i \}}$. 
    
    Let~$T$ be a tree obtained from the disjoint union of~$T_1$ and~$T_2$
    by adding a new node~$r$ and adding two edges~${rr_1}$ and~${rr_2}$. 
    Let~$\sigma$ be the bijection from~${E(M)} $ to the set of leaves of~$T$ induced by~$\sigma_1$ and~$\sigma_2$. 
    Then~${(T,\sigma)}$ is a decomposition of radius at most~${m + 2}$. 
    Furthermore by Lemma~\ref{lem:connfunction}, the width of~${(T,\sigma)}$
    is at most~${m + k}$. 
    Thus, the branch-depth of~$M$ is at most~${\max(m+k,m+2)}$. 
\end{proof}

\subsection{Branch-width}

Robertson and Seymour~\cite{graphminorsX} introduced the concept of branch-width.
A \emph{subcubic} tree is a tree such that every node has degree~$1$ or~$3$. 
A \emph{branch-decomposition} of a matroid~$M$ is defined as a pair~$ {(T,\sigma)} $  consisting of
a subcubic tree~$T$ and a bijection~$\sigma$ from~${E(M)}$ to the set of leaves of~$T$. 
The \emph{width} of an edge~$e$ in~$T$ is defined as~${\lambda_{M}(A_e)+1}$, 
where~${(A_e,B_e)}$ is the partition of~${E(M)}$ induced by the components of~${T \setminus e}$ under~$\sigma^{-1} $. 
The \emph{width} of a branch-decomposition~${(T,\sigma)} $ is the maximum width of edges in~$T$. 
The \emph{branch-width} of a matroid~$M$, denoted by~$\bw(M)$, is defined to be the minimum integer~$k$ 
for which~$M$ admits a branch-decomposition of width~$k$ if~${E(M)}$ has more than one element, and is defined to be~$1$ otherwise. 

Here is a classical lemma on branch-width. 
For the completeness of this paper, we include its proof. 
An equivalent lemma appears in \cite{ggrw}*{Lemma 4.2}, \cite{os2006}*{Theorem 5.1}.
\begin{lemma}\label{lem:AXYseparation}
    Let~$w$ and~$k$ be positive integers. 
    Let~$M$ be a matroid of branch-width at most~$w$ and let~${Z \subseteq E(M)}$. 
    If~${\abs{Z} \geq 3k + 1}$, 
    then there is a partition~$(X,Y)$ of~$E(M)$ such that
    \[
        \lambda(X) < w
        \quad \text{ and } \quad
        \min ( \abs{Z \cap X}, \abs{Z \cap Y} ) > k. 
    \]
\end{lemma}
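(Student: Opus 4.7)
The plan is to fix an optimal branch-decomposition $(T,\sigma)$ of $M$ of width at most $w$, so that every edge $e$ of $T$ induces a partition $(A_e, B_e)$ of $E(M)$ with $\lambda_M(A_e) \leq w - 1 < w$. It therefore suffices to locate an edge $e$ of $T$ for which both $A_e$ and $B_e$ contain more than $k$ elements of $Z$, since then $(X,Y) := (A_e, B_e)$ witnesses the claim.

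I would establish the existence of such an edge by an orientation-and-sink argument on $T$. Suppose for contradiction that no such edge exists. Since $|Z| \geq 3k+1 > 2k$, for each edge $e$ of $T$ exactly one of the two parts $A_e, B_e$ contains more than $k$ elements of $Z$; I will call this part the \emph{heavy side} of $e$, and orient $e$ toward it. Since $T$ is a finite tree with an orientation assigned to each edge, starting at any node and repeatedly following outgoing edges must terminate at a node $v$ with no outgoing edges, namely a \emph{sink}. If $v$ were a leaf of $T$, then its unique incident edge would be oriented into $v$, making the singleton $\{\sigma^{-1}(v)\}$ the heavy side and forcing $k < 1$, contradicting $k \geq 1$. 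Hence $v$ must be an internal node, and so has degree $3$ since $T$ is subcubic.

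Let $T_1, T_2, T_3$ be the three components of $T - v$, and for each $i \in [3]$ let $Z_i$ be the set of elements of $Z$ that $\sigma$ sends into the leaves of $T_i$; the sets $Z_1, Z_2, Z_3$ partition $Z$. Each of the three edges incident to $v$ is oriented into $v$, so for each $i$ the $T_i$-side is the light side of the corresponding edge and hence $|Z_i| \leq k$. Summing gives $|Z| = |Z_1| + |Z_2| + |Z_3| \leq 3k$, contradicting $|Z| \geq 3k+1$, so the desired edge $e$ must exist.

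I expect no real obstacle beyond careful bookkeeping: the hypothesis $k \geq 1$ disposes of the leaf case of the sink, and $|Z| > 2k$ ensures the heavy side of every edge is well-defined. The proof uses no structural matroid theory beyond what is already packaged into the branch-decomposition.
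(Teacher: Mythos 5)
Your proof is correct and takes essentially the same route as the paper: orient each edge of the branch-decomposition tree toward the side with more than $k$ elements of $Z$, find a sink, rule out the leaf case using $k \geq 1$, and get a contradiction at an internal node of degree~$3$. The only difference is cosmetic — the paper allows edges to be oriented in both directions and directly extracts the partition from a bidirected edge, whereas you argue by contradiction so the orientation is uniquely defined; the paper also locates the sink by an edge-count while you follow outgoing edges, but both are valid in a tree.
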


\begin{proof}
    Let~${(T,\sigma)} $ be a branch-decomposition of width at most~$w$. 
    We construct a directed multigraph~$D$ on~$V(T)$ as follows. 
    For each edge~${e = uv}$ of~$T$, we add a directed edge from~$w$ to~$v$ if
    the component of~${T - e}$ containing~$v$ has more than~$k$ leaves in~${\sigma(Z)} $. 
    If between two nodes there are edges oriented in both directions, then 
    that gives a desired partition~${(A_e,B_e)}$. 
    So we may assume that ${\abs{E(D)} \leq \abs{E(T)}}$. 
    Since~${\abs{E(T)} < \abs{V(T)}}$, there is a node~$v$ of~$T$ having no outgoing edges in~$D$.  
    Since~${k \geq 1}$, every edge of~$D$ incident with a leaf of~$T$ is oriented away from the leaf and therefore~$v$ is an internal node of~$T$.  
    However~$v$ has degree~$3$ in~$T$, and so~${\abs{Z} \leq 3k}$, 
    contrary to the assumption that~${\abs{Z} > 3k}$. 
\end{proof}

The following lemma is well{-}known and is an easy consequence of the definitions.

\begin{lemma}[Dharmatilake~\cite{Dharmatilake1996}]
    \label{lem:branch-width-minor-closed}
    If~$N$ is a minor of~$M$, then 
    the branch-width of~$N$ is at most the branch-width of~$M$. 
    \qed
\end{lemma}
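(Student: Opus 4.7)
The plan is to take a branch-decomposition of $M$ of minimum width and restrict it to ${E(N)}$, checking that the width does not grow. If ${\abs{E(N)} \leq 1}$ then ${\bw(N) = 1 \leq \bw(M)}$ by definition, so assume ${\abs{E(N)} \geq 2}$. Let ${(T,\sigma)}$ be a branch-decomposition of $M$ of width ${w := \bw(M)}$ and write ${N = M \setminus X \slash Y}$. I would construct ${(T',\sigma')}$ from ${(T,\sigma)}$ by deleting every leaf corresponding to an element of ${X \cup Y}$, then iteratively deleting every degree-$1$ internal node and suppressing every degree-$2$ node, taking ${\sigma' := \sigma|_{E(N)}}$. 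The resulting tree $T'$ is subcubic with leaves exactly~${\sigma(E(N))}$.

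For the width bound, each edge $e'$ of $T'$ arises from a path $P$ in $T$ whose internal nodes were all suppressed. None of those internal nodes is a leaf of $T$, so no leaf of $T$ in~${\sigma(E(N))}$ branches off $P$ between its endpoints. Hence every edge of $P$ induces the same partition of ${E(N)}$, namely ${(A_e \cap E(N),\, B_e \cap E(N))}$ for any edge $e$ of $P$, where ${(A_e,B_e)}$ is the partition of ${E(M)}$ induced by $e$. So it suffices to prove that for every ${S \subseteq E(M)}$,
\[
    \lambda_N(S \cap E(N)) \leq \lambda_M(S).
\]

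I would establish this inequality by induction on~${\abs{X} + \abs{Y}}$. Since ${\lambda_{M^*} = \lambda_M}$ and contractions are deletions in the dual, the inductive step reduces to the case ${N = M \setminus \{e\}}$, and, without loss of generality taking ${e \in S}$, expanding the definitions reduces the desired inequality to ${r_M(S) - r_M(S \setminus \{e\}) \geq r(M) - r(M \setminus \{e\})}$. The right-hand side is nonzero only when $e$ is a coloop of $M$, in which case $e$ lies in every base and the left-hand side also equals~$1$. Combining this claim with the edge-width formula ${\lambda + 1}$ shows that every edge of ${(T',\sigma')}$ has width at most~$w$, and so ${\bw(N) \leq w = \bw(M)}$.

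The main point deserving care is verifying that suppressing nodes is consistent with the partition induced on~${E(N)}$; this is handled by the observation above that no ${E(N)}$-leaf branches off a suppressed path. The remaining key inequality is a routine rank computation, so I do not expect a genuine obstacle.
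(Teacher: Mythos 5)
The paper cites this lemma to Dharmatilake and does not supply a proof, so there is nothing to compare against; I will just assess your argument. Your proof is correct and is the standard one: restrict the branch-decomposition of~$M$ to~$E(N)$ by deleting the leaves in~$\sigma(X \cup Y)$, iteratively removing degree-$1$ internal nodes and suppressing degree-$2$ nodes, and then verify the width does not grow using the monotonicity~$\lambda_N(S \cap E(N)) \leq \lambda_M(S)$, which you correctly reduce (via duality and one element at a time) to a single application of submodularity.

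One step is stated a little loosely. From ``none of the internal nodes of the suppressed path~$P$ is a leaf of~$T$'' you conclude ``no leaf of~$T$ in~$\sigma(E(N))$ branches off~$P$,'' but the former does not by itself give the latter: a leaf in~$\sigma(E(N))$ could a priori sit in the third subtree hanging off an internal node of~$P$. The correct justification is that each internal node~$w$ of~$P$ was suppressed, hence became degree~$2$ at some stage, hence its off-path subtree of~$T$ must have been deleted entirely beforehand; and since only $\sigma(X \cup Y)$-leaves and degenerate internal nodes are ever deleted, that subtree contains no leaf in~$\sigma(E(N))$. With this short fix supplied, the argument is complete.
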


The branch-width of a graph~$G$ is defined as follows.
Let~$T$ be a subcubic tree, and let~$\sigma $ be a bijection from~${E(G)}$ to the set of leaves of~$T$. 
Then we say that~${(T,\sigma)} $ is a \emph{branch-decomposition} of~$G$. 
Let~$e$ be an edge of~$T$, and let~${(A_{e},B_{e})}$ be a partition of~${E(G)}$ induced by the components of~${T \setminus e}$.
The \emph{width} of~$e$ is the number of vertices that are incident with edges in both~$A_{e}$ and~$B_{e}$.
The \emph{width} of the branch-decomposition is the maximum width of an edge in~$T$. 
The \emph{branch-width} of~$G$ is the minimum integer~$k$ such that~$G$ has a branch-decomposition of width~$k$ when~$G$ has at least two edges (otherwise the branch-width of~$G$ is defined to be~$0$). 

Hicks and McMurray~\cite{HM07} and, independently, Mazoit and Thomass\'{e}~\cite{MT07} proved that the branch-width of the graph~$G$ is equal to the branch-width of the graphic matroid~${M(G)}$, if~$G$ has a cycle of length at least~$2$.

\section{Fundamental graphs and twisted matroids}\label{sec:twisted}

\subsection{The fundamental graph}

Let~$M$ be a matroid on ground set~$E$ and let~$B$ be a base of~$M$. 
We define the \emph{fundamental graph~${G(M,B)}$ of~$M$ with respect to~$B$} 
as the bipartite graph with bipartition classes~$B$ and~${E \setminus B}$ such that there is an edge between~${b \in B}$ and~${e \in E\setminus B}$ if and only if~$b$ is in the fundamental circuit~${C_M(e,B)}$ of~$e$ with respect to~$B$. 

The following statements about the fundamental graph are well known and are easy consequences of the relevant definitions. 

\begin{proposition}
    \label{prop:fundamental-graph}
    Let~$M$ be a matroid and let~$B$ be a base of~$M$. 
    Then the following statements are true. 
    \begin{enumerate}
        [label=(\roman*)]
        \item \label{rmk:connectivity} 
            $M$ is connected if and only if~${G(M,B)}$ is connected 
            (see \cite{oxley}*{Proposition 4.3.2}). 
        \item \label{rmk:dual-fundamental} 
            ${G(M,B)}$ and ${G(M^*, E(M) \setminus B)}$ are equal. 
    \end{enumerate}
\end{proposition}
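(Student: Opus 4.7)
The plan is to treat the two parts separately, both being folklore.

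For part~(i), I would invoke the standard characterisation that~$M$ is connected if and only if every two elements of~${E(M)}$ lie together in some circuit. The easier direction is: if~$G(M,B)$ disconnects into parts~$V_1$ and~$V_2$, then every fundamental circuit~${C_M(e,B)}$ with~${e \in V_i \setminus B}$ lies entirely inside~$V_i$, because its intersection with~$B$ consists precisely of the neighbours of~$e$ in~$G(M,B)$. Since every circuit of~$M$ is obtainable from fundamental circuits by repeated strong circuit elimination, no circuit crosses~${(V_1, V_2)}$, which gives a separator of~$M$. The converse is symmetric: a separator~${(X, E(M) \setminus X)}$ of~$M$ forces each fundamental circuit to lie on one side, so no edge of~$G(M,B)$ crosses the induced bipartition on its vertex set.

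For part~(ii), the key ingredient is the orthogonality between fundamental circuits and fundamental cocircuits: for~${b \in B}$ and~${e \in E(M) \setminus B}$ one has~${b \in C_M(e,B)}$ if and only if~${e \in C^*_M(b,B)}$, where~${C^*_M(b,B)}$ is the unique cocircuit of~$M$ contained in~${(E(M) \setminus B) \cup \{b\}}$. By duality, this cocircuit is exactly the fundamental circuit~${C_{M^*}(b, E(M) \setminus B)}$ of~$b$ in~$M^*$ with respect to the dual base. Both graphs share the bipartition~${\{B, E(M) \setminus B\}}$, and the equivalence above matches their edge sets pair by pair, so they are equal as bipartite graphs.

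Neither step is genuinely difficult; the only care required is bookkeeping the bipartition conventions in part~(ii) and citing the orthogonality fact and the characterisation of connectedness via common circuits from a standard reference such as Oxley.
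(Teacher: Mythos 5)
The paper itself offers no proof of this proposition; it is stated as a collection of well-known facts, with part~\ref{rmk:connectivity} attributed to Oxley and part~\ref{rmk:dual-fundamental} left as an immediate consequence of the definitions. So there is no proof in the paper for me to compare against; I can only assess your argument on its own terms.

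Your proof of part~\ref{rmk:dual-fundamental} is correct and is the standard one: unwinding definitions, $b \in C_M(e,B)$ if and only if $(B \setminus \{b\}) \cup \{e\}$ is a base of~$M$, if and only if $((E(M)\setminus B) \setminus \{e\}) \cup \{b\}$ is a base of~$M^*$, if and only if $e \in C_{M^*}(b, E(M)\setminus B)$, which matches the two edge sets exactly.

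In part~\ref{rmk:connectivity} the direction you call easier is fine, but the pivotal sentence --- ``every circuit of $M$ is obtainable from fundamental circuits by repeated strong circuit elimination'' --- is not a standard quotable fact, and as written it is too vague to carry the argument: strong circuit elimination is nondeterministic, and it is not obvious (nor do you argue) that some sequence of eliminations starting from fundamental circuits reaches an arbitrary circuit. The conclusion you want is true, but it deserves a direct argument. The cleanest one avoids circuit elimination entirely: if $G(M,B)$ splits into parts $V_1, V_2$, then for each $e \in V_i \setminus B$ the fundamental circuit satisfies $C_M(e,B) \setminus \{e\} \subseteq B \cap V_i$, so $V_i \subseteq \mathrm{cl}(B \cap V_i)$ and $r(V_i) = \lvert B \cap V_i\rvert$; summing gives $r(V_1) + r(V_2) = \lvert B \rvert = r(M)$, i.e.\ $\lambda_M(V_1) = 0$. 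Alternatively, a minimal-counterexample argument works: take a crossing circuit $C$ with $\lvert C \setminus B\rvert$ minimum, pick $e \in C \setminus B$ with (say) $C_M(e,B) \subseteq V_1$ and $f \in C \cap V_2$, and strong-eliminate $e$ from $C$ and $C_M(e,B)$ retaining $f$; the resulting circuit still crosses (else it would be a proper subset of $C$) and has strictly fewer elements outside $B$, a contradiction. Either fix closes the gap; as stated, however, this step is an assertion rather than a proof.
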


It is well known that a matroid is binary if and only if for any base~$B$, any circuit~$C$ is the symmetric difference of all fundamental circuits~${C(e,B)}$ with~${e \in C \setminus B}$ \cite{oxley}*{Theorem~9.1.2}. 
Hence, every binary matroid is completely determined by its fundamental graph and a colour class of any proper $2$-colouring of that fundamental graph that is a base of the matroid. 

For general matroids, such a complete determination fails; 
two distinct matroids may have the same fundamental graph with respect to the same base.
But one can ask how a fundamental graph with respect to some base will change when doing base exchange. 

Note that if~${G(M,B)}$ has an edge~$uv$, then ${B' := B \triangle \{u,v\}}$ is a base of~$M$. 
The operation of constructing ${G(M, B')}$ from~${G(M,B)}$ is called a \emph{pivot} on~$uv$. 

\begin{proposition}[Geelen, Gerards, and Kapoor~\cite{GGK2000}]
    \label{prop:fundamental-graph-rules1}
    Let~$M$ be a matroid, let~$B$ be a base of~$M$, and let~$uv$ be an edge of~${G := G(M,B)}$. 
    Then with~${B' := B \triangle \{u,v\}}$ the following statements about~${G' := G(M, B')}$ are true.
    \begin{enumerate}
        [label=(\roman*)]
        \item 
            $\n_{G'}(u) = \n_{G}(v)\triangle \{u,v\}$, 
            and $\n_{G'}(v) = \n_{G}(u)\triangle \{u,v\}$.
        \item 
            If ${x \notin \n_G(u) \cup \n_G(v)}$, then $\n_{G'}(x) = \n_{G}(x)$. 
        \item 
            If ${x \in \n_G(u)}$ and ${y \in \n_G(v) \setminus \n_G(x)}$, then~$xy$ is an edge of~$G'$. 
            
        \item 
            If ${G[\{x,y,u,v\}]}$ is a cycle of length~$4$, then~$xy$ is an edge of~$G'$ if and only if~${B \triangle \{x,y,u,v\}}$ is a base of~$M$. 
    \end{enumerate}
\end{proposition}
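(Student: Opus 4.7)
The plan is to reduce everything to statements about fundamental circuits, noting that in the bipartite graph $G(M,B)$ the neighborhood of a vertex is exactly its fundamental circuit intersected with the opposite bipartition class. Without loss of generality, assume $u \in B$ and $v \in E(M) \setminus B$ (the other case follows by passing to $M^*$ and invoking Proposition~\ref{prop:fundamental-graph}). Since $uv$ is an edge of $G$, we have $u \in C_M(v,B)$, so $B' = (B \setminus \{u\}) \cup \{v\}$ is indeed a base.

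For part~(i), the key observation is that $B' \cup \{u\} = B \cup \{v\}$, and this set contains a unique circuit, which must therefore equal both $C_M(u, B')$ and $C_M(v, B)$. Intersecting with $B'$ gives $\n_{G'}(u) = (\n_G(v) \setminus \{u\}) \cup \{v\} = \n_G(v) \triangle \{u,v\}$; the claim for $\n_{G'}(v)$ then follows by applying the same argument to $M^*$ with base $E(M) \setminus B$ and invoking Proposition~\ref{prop:fundamental-graph} again. For part~(ii), by the same duality we may reduce to $x \in E(M) \setminus B$. The hypothesis $x \notin \n_G(u)$ gives $u \notin C_M(x, B)$, so $C_M(x, B) \subseteq B' \cup \{x\}$, which by uniqueness of fundamental circuits forces $C_M(x, B') = C_M(x, B)$; taking intersection with $B'$ (and noting $v \notin C_M(x, B) \subseteq B \cup \{x\}$) yields $\n_{G'}(x) = \n_G(x)$.

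The heart of the argument is part~(iii), which uses the strong circuit elimination axiom. Applied to the circuits $C_1 = C_M(v, B)$ and $C_2 = C_M(x, B)$ with $u \in C_1 \cap C_2$ (since $uv$ and $ux$ are edges of $G$) and $y \in C_1 \setminus C_2$ (using $y \in \n_G(v) \setminus \n_G(x)$), it produces a circuit $C$ with $y \in C \subseteq (C_1 \cup C_2) \setminus \{u\} \subseteq B' \cup \{x\}$. Since $C$ cannot be contained in the independent set $B'$, it must contain $x$, forcing $C = C_M(x, B')$ and hence $y \in \n_{G'}(x)$. For part~(iv), the 4-cycle hypothesis forces $\{x, y, u, v\}$ to consist of four distinct vertices split across the bipartition as (say) $x \in B$, $y \in E(M) \setminus B$; then $xy$ is an edge of $G'$ if and only if $B' \triangle \{x, y\}$ is a base of~$M$, and a direct calculation yields $B' \triangle \{x, y\} = (B' \setminus \{x\}) \cup \{y\} = B \triangle \{x, y, u, v\}$. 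The main obstacle is part~(iii), which genuinely requires circuit elimination; the other parts are essentially unpackings of the definition of a fundamental circuit combined with the duality of fundamental graphs.
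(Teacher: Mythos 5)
The paper states this proposition without proof, citing Geelen, Gerards, and Kapoor, so there is no in-paper argument to compare against. Your proof is correct and self-contained: the reduction to $u \in B$, $v \notin B$ via Proposition~\ref{prop:fundamental-graph}\ref{rmk:dual-fundamental}, the uniqueness-of-fundamental-circuit arguments for (i) and (ii), the use of strong circuit elimination at $u$ (which is genuinely needed to keep $y$ in the resulting circuit) for (iii), and the observation that $xy$ is an edge of a fundamental graph $G(M,C)$ precisely when $C \triangle \{x,y\}$ is a base, combined with $B' \triangle \{x,y\} = B \triangle \{x,y,u,v\}$, for (iv), are all sound.
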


Note that the first three rules of the proposition allow us to determine from the graph~$G$ the adjacency or non-adjacency in~$G'$ of some pairs of vertices~$x$ and~$y$. 
This is not true of the fourth rule. 
However, if in addition to the edge set of the fundamental graph we were given a list of `hyperedges' ${\{x,y,u,v\}}$ for which ${B \triangle \{x,y,u,v\}}$ is a base, then we could apply all four rules. 

As an extension of that idea, Geelen, Gerards, and Kapoor~\cite{GGK2000} introduced \emph{twisted matroids}, 
which can in a sense be viewed as `fundamental hypergraphs'. 
We introduce their machinery in the next subsection. 

\subsection{Twisted matroids}

Let~${S = (E,\mathcal{P})}$ be a set system and let~${X \subseteq E}$. 
We define the \emph{twist of~$S$ by~$X$} as 
\[
    S \ast X := ( E , \mathcal{P} \triangle X ).
\]
Moreover, we define the \emph{restriction of~$S$ to~$X$} as
\[
    S[X] := (E, \mathcal{P} | X),
\]
where, as noted before, ${\mathcal{P} | X := \{ P \subseteq X \colon P \in \mathcal{P} \}}$. 

\begin{remark}
    \label{rmk:double-twist}
    Let ${S = (E, \mathcal{P})}$ be a set system and let~${X, Y \subseteq E}$. 
    Then 
    \[
        (S \ast X) \ast Y = S \ast (X \triangle Y).
    \]
\end{remark}

\vspace{0.2cm}

\pagebreak[2]

A \emph{twisted matroid}~$W$ is a set system~$(E,\mathcal{F})$ satisfying the following properties: 
\begin{enumerate}
    [label=\upshape{(T\arabic*)}]
    \item\label{axiom:T1} ${\emptyset \in \mathcal{F}}$. 
    \item\label{axiom:T2} For every~${F_1, F_2 \in \mathcal{F}}$ and every~${e \in F_1 \triangle F_2}$, there is an~${f \in F_1 \triangle F_2}$ such that~${F_1 \triangle \{e,f\} \in \mathcal{F}}$.
    \item\label{axiom:T3} There is a set~${B \subseteq E}$ such that~${\abs{B \cap F} = \abs{ (E \setminus B) \cap F}}$ for all~${F \in \mathcal{F}}$. 
\end{enumerate}

We call~$E$ the \emph{ground set} of~$W$ and may denote it by~${E(W)}$. 
We call the elements of~$\mathcal{F}$ \emph{feasible} (with respect to~$W$), and may denote the set~$\mathcal{F}$ by~${\mathcal{F}(W)}$. 
We call a set~$B$ which satisfies~\ref{axiom:T3} a \emph{base of~$W$}. 
We denote by~${\mathcal{B}(W)}$ the set of bases of~$W$. 

We observe that~\ref{axiom:T3} implies that every feasible set has even size. 
And in fact it is enough to restrict our attention to feasible sets of size two, as the following proposition will show. 

\begin{proposition}
    \label{prop:weaker-axiom}
    Let~${W = (E,\mathcal{F})}$ be a set system satisfying~\ref{axiom:T1} and~\ref{axiom:T2}. 
    Then~\ref{axiom:T3} is equivalent to the following axiom. 
    \begin{enumerate}
        [label=\upshape{(T\arabic*$'$)}] 
        \setcounter{enumi}{2}
        \item \label{axiom:T3'} 
            \textnormal{There is a set~${B \subseteq E}$ such that~${\abs{B \cap F} = \abs{ (E \setminus B) \cap F}}$ for all~${F \in \mathcal{F}}$ with $\abs{F} \leq 2$.}
    \end{enumerate}
    
    Moreover, a set~$B$ satisfies~\ref{axiom:T3} if and only if it satisfies~\ref{axiom:T3'}. 
\end{proposition}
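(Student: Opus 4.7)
The direction \ref{axiom:T3}~$\Rightarrow$~\ref{axiom:T3'} is immediate, since \ref{axiom:T3'} is the restriction of \ref{axiom:T3} to feasible sets of size at most two, and the same~$B$ witnesses both. Hence the substantive task is to prove the converse for a fixed~$B$: if~${B \subseteq E}$ satisfies \ref{axiom:T3'}, then it also satisfies \ref{axiom:T3}. Once this is done, the ``moreover'' clause, and with it the equivalence of the two axioms, follows at once.

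For this converse, I would encode the required equality through the signed weight~${\chi \colon E \to \{-1, +1\}}$ defined by~${\chi(e) = +1}$ if~${e \in B}$ and~${\chi(e) = -1}$ otherwise, extended additively to subsets; then~${|B \cap F| = |(E \setminus B) \cap F|}$ is equivalent to~${\chi(F) = 0}$. A crucial first observation is that \ref{axiom:T3'} already rules out singleton feasible sets: if~${\{e\} \in \mathcal{F}}$, then~${\chi(\{e\}) = \pm 1 \neq 0}$ violates \ref{axiom:T3'}.

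Now suppose for contradiction that some~${F \in \mathcal{F}}$ has~${\chi(F) \neq 0}$, and take~$F$ of minimum size; by \ref{axiom:T3'} we have~${|F| \geq 3}$, and by minimality every feasible set smaller than~$F$ has $\chi$-value zero. The core of the argument is to apply \ref{axiom:T2} twice in different guises. First, with~${F_1 = \emptyset}$ and~${F_2 = F}$ at an arbitrary~${e \in F}$, one obtains~${g \in F}$ with~${\{e, g\} \in \mathcal{F}}$; since singletons are not feasible,~${g \neq e}$, and then \ref{axiom:T3'} yields~${\chi(g) = -\chi(e)}$, so every element of~$F$ has a partner of opposite $\chi$-value inside~$F$. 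Second, with~${F_1 = F}$ and~${F_2 = \emptyset}$ at the same~$e$, one obtains~${f \in F}$ such that~${F \triangle \{e, f\}}$ is a strictly smaller feasible set and so has $\chi$-value zero; this gives either~${\chi(F) = \chi(e)}$ (if~${f = e}$) or~${\chi(F) = \chi(e) + \chi(f)}$ (if~${f \neq e}$). In either case~${\chi(F) \in \{-2, -1, 1, 2\}}$.

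The contradiction now falls out of a case analysis on~${|\chi(F)|}$. If~${|\chi(F)| = 1}$, then the sub-case~${f \neq e}$ of the second application gives a value in~${\{-2, 0, 2\}}$ and is therefore impossible, so~${f = e}$ must always hold, forcing~${\chi(e) = \chi(F)}$ for every~${e \in F}$; but then no partner~$g$ of opposite $\chi$-value can exist, contradicting the first application. If~${|\chi(F)| = 2}$, then the sub-case~${f = e}$ gives a value in~${\{-1, +1\}}$ and is impossible, so~${f \neq e}$ always holds and~${\chi(e) + \chi(f) = \chi(F)}$ forces~${\chi(e) = \chi(F)/2}$ for every~${e \in F}$; again~$F$ lies entirely on one side of the bipartition~${(B, E \setminus B)}$, contradicting the first application. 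The main subtlety of the plan is recognising that \ref{axiom:T3'} on its own is already strong enough to forbid singletons from~$\mathcal{F}$, since this is precisely what guarantees that the sets produced by the first application of \ref{axiom:T2} genuinely have size two and therefore fall under the hypothesis of \ref{axiom:T3'}.
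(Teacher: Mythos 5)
Your proposal is correct and follows essentially the same strategy as the paper: take a size-minimal counterexample~$F$, use~\ref{axiom:T2} with~$\emptyset$ both to produce feasible pairs inside~$F$ (which rules out singletons and shows~$F$ meets both~$B$ and~${E \setminus B}$) and to produce a strictly smaller feasible subset of~$F$, then invoke minimality. The paper streamlines the final contradiction by applying~\ref{axiom:T2} at an element~$x$ on the minority side of the bipartition and observing directly that~${F \setminus \{x,z\}}$ is a smaller unbalanced feasible set, whereas you reach the same conclusion by a case analysis on~${\abs{\chi(F)} \in \{1,2\}}$; both are sound.
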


\begin{proof}
    Assume~\ref{axiom:T3'} holds and let~${B \subseteq E}$ be as required. 
    We will show that~$B$ satisfies~\ref{axiom:T3}. 
    Suppose for a contradiction 
    that there is a set~${F \in \mathcal{F}}$ of minimum size 
    for which~${\abs{B \cap F} \neq \abs{(E \setminus B) \cap F}}$. 
    Let~${X, Y \in \{B , (E \setminus B)\}}$ with~${\abs{X \cap F} < \abs{Y \cap F}}$. 
    By applying~\ref{axiom:T2} to~$\emptyset$,~$F$, and some~${e \in F}$, there is an~${f \in F}$ such that~${\{e,f\} = \emptyset \triangle \{e,f\} \in \mathcal{F}}$ and hence~${e \neq f}$ by~\ref{axiom:T3'}. 
    By~\ref{axiom:T3'}, exactly one of~$e$ or~$f$ is in~$B$, so~${X \cap F}$ is non-empty. 
    Applying~\ref{axiom:T2} again to~$F$,~$\emptyset$, and some~${x \in X \cap F}$, there is some~${z \in F}$ such that 
    \[
        {F' := F \triangle \{x,z\} = F \setminus \{x,z\} \in \mathcal{F}}.
    \]
    Now 
    \[
        \abs{F' \cap X} 
        < \abs{F \cap X} 
        \leq \abs{F \cap Y} - 1 
        \leq \abs{F' \cap Y},
    \]
    contradicting that~$F$ was a set of minimum size for which~${\abs{B \cap F} \neq \abs{(E \setminus B) \cap F}}$. 
\end{proof}

Note that this axiomatic definition of twisted matroids does not coincide with the original definition of Geelen, Gerards, and Kapoor~\cite{GGK2000}, in which they defined twisted matroids to be the twist~${M \ast B}$ of a matroid~$M$ with a base~$B$ of~$M$. 
The following proposition establishes together with Remark~\ref{rmk:double-twist} the equivalence of these definitions. 

\begin{proposition}
    \label{prop:twisted-equivalence}
    Let~${M = (E,\mathcal{B})}$ and~${W = (E,\mathcal{F})}$ be set systems and let~${B \subseteq E}$ such that~${W = M \ast B}$ (equivalently~${M = W \ast B}$). 
    Then the following statements are equivalent. 
    \begin{enumerate}
        [label=(\alph*)]
        \item \label{prop-item:equi-matroid} 
            $M$ is a matroid and~${B \in \mathcal{B}}$. 
        \item \label{prop-item:equi-twisted} 
            $W$ is a twisted matroid and ${B \in \mathcal{B}(W)}$. 
    \end{enumerate}
\end{proposition}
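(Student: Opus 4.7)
The plan is to verify each of the two implications by translating directly between the base axioms \ref{axiom:B1}--\ref{axiom:B2} on $\mathcal{B}$ and the twisted matroid axioms \ref{axiom:T1}--\ref{axiom:T3} on $\mathcal{F} = \mathcal{B} \triangle B$, using throughout the elementary identity that $F_1 \triangle F_2 = B_1 \triangle B_2$ whenever $F_i = B_i \triangle B$, together with Remark~\ref{rmk:double-twist}.

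For \ref{prop-item:equi-matroid} $\Rightarrow$ \ref{prop-item:equi-twisted}, axiom \ref{axiom:T1} is immediate since $\emptyset = B \triangle B$ lies in $\mathcal{F}$. For \ref{axiom:T3}, I would exploit the fact that all bases of a matroid have equal cardinality: for each $F = B' \triangle B \in \mathcal{F}$ the equality $\abs{B} = \abs{B'}$ forces $\abs{F \cap B} = \abs{F \cap (E \setminus B)}$, certifying $B$ itself as a base of $W$. The substantive step is \ref{axiom:T2}: given $F_1, F_2 \in \mathcal{F}$ and $e \in F_1 \triangle F_2 = B_1 \triangle B_2$, I need to find $f \in B_1 \triangle B_2$ such that $B_1 \triangle \{e,f\} \in \mathcal{B}$, which then translates back to $F_1 \triangle \{e,f\} \in \mathcal{F}$.

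The main obstacle is that \ref{axiom:B2} as stated is asymmetric: it delivers such an $f$ only in the case $e \in B_1 \setminus B_2$. In the remaining case $e \in B_2 \setminus B_1$, I would appeal to the well-known dual form of basis exchange, a standard consequence of \ref{axiom:B2}, which says that for $B_1, B_2 \in \mathcal{B}$ and any $e \in B_2 \setminus B_1$ there exists $f \in B_1 \setminus B_2$ with $B_1 \triangle \{e,f\} \in \mathcal{B}$. With this in hand, both cases produce the required $f$ inside $B_1 \triangle B_2 = F_1 \triangle F_2$.

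For the converse \ref{prop-item:equi-twisted} $\Rightarrow$ \ref{prop-item:equi-matroid}, I would first observe that \ref{axiom:T3} forces $\abs{F \cap B} = \abs{F}/2$ for every $F \in \mathcal{F}$, so every element of $\mathcal{B} = \mathcal{F} \triangle B$ has size exactly $\abs{B}$. Axiom \ref{axiom:B1} is then immediate from \ref{axiom:T1}. For \ref{axiom:B2}, starting from $B_1, B_2 \in \mathcal{B}$ and $e \in B_1 \setminus B_2 \subseteq B_1 \triangle B_2 = F_1 \triangle F_2$, axiom \ref{axiom:T2} yields some $f \in F_1 \triangle F_2$ with $B_1 \triangle \{e,f\} \in \mathcal{B}$. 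The uniform size $\abs{B}$ of bases forces $\abs{\{e,f\} \cap B_1} = 1$, and since $e \in B_1$ this pins $f$ down to $B_2 \setminus B_1$, completing the argument.
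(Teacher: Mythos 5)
Your proof is correct and follows essentially the same route as the paper's: translate between the base axioms (B1)--(B2) and the twisted-matroid axioms (T1)--(T3) via $F = B' \triangle B$, use equicardinality of bases to verify (T3), and in the converse direction use (T2) to produce $f$ and then (T3)/equicardinality to force $f \in B_2 \setminus B_1$. The one genuine refinement is your treatment of (T2) in the forward direction: you correctly observe that (B2) as stated is one-sided, covering only $e \in B_1 \setminus B_2$, and that the case $e \in B_2 \setminus B_1$ requires the standard dual form of basis exchange (for $e \in B_2 \setminus B_1$ there is $f \in B_1 \setminus B_2$ with $B_1 \triangle \{e,f\} \in \mathcal{B}$); the paper dispatches (T2) with the single remark that it follows from (B2), leaving this case distinction implicit, so your more careful analysis is a small but real gain in completeness.
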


\begin{proof}
    Suppose~\ref{prop-item:equi-matroid} holds. 
    Then~${\emptyset \in \mathcal{B} \triangle B = \mathcal{F}}$. 
    Since~$M$ satisfies~\ref{axiom:B2},~$W$ satisfies~\ref{axiom:T2}. 
    Finally, every element of~${\mathcal{F} \triangle B = \mathcal{B}}$ has size~${\abs{\emptyset \triangle B} = \abs{B}}$, so~$B$ satisfies~\ref{axiom:T3}. 
    
    Suppose~\ref{prop-item:equi-twisted} holds.
    By~\ref{axiom:T1} 
    we have~${B \in \mathcal{B}}$ and hence~\ref{axiom:B1} holds. 
    For~\ref{axiom:B2}, consider bases~${B_1, B_2 \in \mathcal{B}}$ and~${e \in B_1 \setminus B_2}$.
    Let~${F_1 := B_1 \triangle B}$ and~${F_2 := B_2 \triangle B}$. 
    Then~${F_1, F_2 \in \mathcal{F}}$. 
    Note that 
    \[
        {(B_1 \setminus B_2) \cup (B_2 \setminus B_1) = F_1 \triangle F_2}.
    \]
    Then~${e \in F_1 \triangle F_2}$ and hence by~\ref{axiom:T2} there is an~${f \in F_1 \triangle F_2}$ such that~${F_1 \triangle \{e,f\} \in \mathcal{F}}$. 
    If~${f \in B_1 \setminus B_2}$, then either~$F_1$ or~${F_1 \triangle \{e,f\}}$ will contradict~\ref{axiom:T3}. 
    Hence, ${f \in B_2 \setminus B_1}$. 
\end{proof}

For a twisted matroid~$W$ we define 
\[
    \mathcal{M}(W) := \big\{ (M,B) \colon \textnormal{$M$ is a matroid and~$B$ is a base of~$M$ such that } M \ast B = W \big\}.
\]
If ${(M,B) \in \mathcal{M}(W)}$, then we say~$M$ is \emph{associated with~$W$}. 
Note that for a twisted matroid~$W$ and a base~$B$ of~$W$ we have~${(W \ast B, B) \in \mathcal{M}(W)}$ by Proposition~\ref{prop:twisted-equivalence} and Remark~\ref{rmk:double-twist}. 

We define the \emph{fundamental graph~$G(W)$ of~$W$} as the graph on vertex set~$E$ such that there is an edge between~${x, y \in E}$ if and only if ${\{x,y\} \in \mathcal{F}(W)}$. 
By~\ref{axiom:T3}, the fundamental graph of~$W$ is bipartite. 
In fact, by~\ref{axiom:T3'}, a set~$B$ is a base of~$W$ if and only if there is a proper~$2$-colouring of~$G(W)$ in which~$B$ is a colour class. 

\begin{proposition}
    \label{prop:twisted1}
    Let~${W = (E, \mathcal{F})}$ be a twisted matroid, 
    let~${(M,B) \in \mathcal{M}(W)}$ 
    and~${X \subseteq E}$. 
    Then the following statements are true.
    \begin{enumerate}
        [label=(\roman*)]
        \item \label{prop-item:feasible} 
            ${X \in \mathcal{F}(W)}$ if and only if ${B \triangle X \in \mathcal{B}(M)}$. 
        \item \label{prop-item:fundamental-graph} 
            The fundamental graph~${G(M,B)}$ is equal to 
            the fundamental graph~${G(W)}$. 
        \item \label{prop-item:accociated-dual} 
            ${(M^*, E \setminus B) \in \mathcal{M}(W)}$. 
        \item \label{prop-item:associated-connected} 
            If $M$ is connected, then $\mathcal{M}(W)=\{(M,B),(M^*, E \setminus B)\}$.
    \end{enumerate}
\end{proposition}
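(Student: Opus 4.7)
My plan is to prove parts (i)--(iv) in order, with the first three being short unpackings of the defining identity $\mathcal{F}(W) = \mathcal{B}(M) \triangle B$, and part (iv) being the real content.

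Part (i) is immediate from the identity above together with the involutivity of $\triangle$: $X \in \mathcal{F}(W)$ iff $X = B_0 \triangle B$ for some $B_0 \in \mathcal{B}(M)$, iff $B \triangle X \in \mathcal{B}(M)$. For part (ii), I take $b \in B$ and $e \in E \setminus B$, and note that $\{b, e\}$ is an edge of $G(M, B)$ precisely when $(B \setminus \{b\}) \cup \{e\} = B \triangle \{b, e\}$ is a base of $M$, which by (i) means $\{b, e\} \in \mathcal{F}(W)$; Proposition~\ref{prop:weaker-axiom} applied to the base $B$ of $W$ then guarantees that every size-$2$ feasible set meets both $B$ and $E \setminus B$, so these are in fact all the edges of $G(W)$. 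For part (iii), the dual identity $\mathcal{B}(M^\ast) = \{E \setminus B_0 : B_0 \in \mathcal{B}(M)\}$ immediately gives $\mathcal{B}(M^\ast) \triangle (E \setminus B) = \mathcal{B}(M) \triangle B = \mathcal{F}(W)$, and $E \setminus B$ is a base of $M^\ast$.

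The substantive step is (iv). Given $(M', B') \in \mathcal{M}(W)$, I would first invoke (ii) to identify $G(M', B') = G(W) = G(M, B)$. Connectedness of $M$ together with Proposition~\ref{prop:fundamental-graph}\ref{rmk:connectivity} then makes $G(W)$ connected. The remark immediately preceding Proposition~\ref{prop:twisted1} characterises bases of $W$ as colour classes of a proper $2$-colouring of $G(W)$, and a connected bipartite graph has a unique proper $2$-colouring up to swapping its two colour classes, forcing $B' \in \{B, E \setminus B\}$. Substituting each possibility into the equation $\mathcal{B}(M') \triangle B' = \mathcal{B}(M) \triangle B$ yields $M' = M$ in the first case and $\mathcal{B}(M') = \{E \setminus B_0 : B_0 \in \mathcal{B}(M)\} = \mathcal{B}(M^\ast)$, hence $M' = M^\ast$, in the second.

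The main obstacle is the bipartition-uniqueness step in (iv): this is exactly where the hypothesis that $M$ is connected is essential, since without it $G(W)$ could split into several components and permit many other colour classes as bases. A brief sanity check is also required for the trivial case $\abs{E(M)} \leq 1$, where $G(W)$ has no edges and Proposition~\ref{prop:fundamental-graph}\ref{rmk:connectivity} carries no content; here $\mathcal{F}(W) = \{\emptyset\}$ and direct inspection confirms that $\{(M, B), (M^\ast, E \setminus B)\}$ already exhausts $\mathcal{M}(W)$.
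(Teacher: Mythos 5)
Your argument matches the paper's proof essentially step by step: part (i) is unpacked from the identity $\mathcal{F}(W) = \mathcal{B}(M) \triangle B$, part (ii) from the characterisation of fundamental-circuit membership via base exchange, part (iii) from $\mathcal{B}(M^\ast) = \mathcal{B}(M) \triangle E(M)$, and part (iv) from the uniqueness of proper $2$-colourings of a connected bipartite graph. One small note: you correctly invoke Proposition~\ref{prop:fundamental-graph}\ref{rmk:connectivity} in part (iv); the published proof cites \ref{rmk:dual-fundamental} there, which appears to be a typo, so your version is in fact the more accurate reference.
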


\begin{proof}
    For~\ref{prop-item:feasible}, 
    suppose~${X \in \mathcal{F}}$. 
    Then~$M$ has a base~${B'}$ such that~${X = B \triangle B'}$. 
    By the properties of the symmetric difference we obtain~${B' = B \triangle X}$. 
    Conversely, if ${B \triangle X = B' \in \mathcal{B}}$, then~${X = B' \triangle B \in \mathcal{F}}$. 
    
    For~\ref{prop-item:fundamental-graph}, 
    note that~$f$ is on the fundamental circuit of~$e$ with respect to~$B$, if and only if~${(B \setminus \{f\}) \cup \{e\}}$ is a base of~$M$. 
    So ${\{e,f\} = ((B \setminus \{f\}) \cup \{e\}) \triangle B}$ is feasible if and only if~${ef}$ is an edge of~${G(M,B)}$. 
    
    For~\ref{prop-item:accociated-dual}, 
    note that for~${(M, B) \in \mathcal{M}(W)}$ we have 
    \[
        {\mathcal{B}(M^*) 
        = \mathcal{B}(M) \triangle E(M)
        = (\mathcal{F} \triangle B) \triangle E(M) 
        = \mathcal{F} \triangle (E(M) \setminus B)}.
    \]
    
    For~\ref{prop-item:associated-connected}, suppose~$M$ is connected. 
    By Proposition~\ref{prop:fundamental-graph}\ref{rmk:dual-fundamental}, 
    ${G(M,B) = G(W)}$ is connected and hence every proper 2-colouring of~${G(W)}$ has~$B$ and~${E \setminus B}$ as its colour classes. 
\end{proof}

\subsection{Minors of twisted matroids}

\begin{proposition}
    \label{prop:twisted-minors} 
    Let~${W = (E, \mathcal{F})}$ be a twisted matroid and let~${X, F \subseteq E}$. 
    Then the following statements are true. 
    \begin{enumerate}
        [label=(\roman*)]
        \item \label{prop-item:feasible-twist} 
            ${W \ast F}$ is a twisted matroid if and only if~${F \in \mathcal{F}}$. \\
            Additionally, 
            ${W \ast F = M \ast (B \triangle F)}$ for any~${(M,B) \in \mathcal{M}(W)}$. 
        \item \label{prop-item:restriction} 
            ${W[X]}$ is a twisted matroid for which  
            \[
                {G(W[X]) = G(W)[X]} 
                \ \text{ and } \ 
                {\{ B \cap X \colon B \in \mathcal{B}(W) \} \subseteq \mathcal{B}(W[X])}
            \]
        \item \label{prop-item:restricted-twist} 
            If ${F \in \mathcal{F}|X}$, then~${W[X] \ast F = (W \ast F)[X]}$. 
        \item \label{prop-item:compatible-minor} 
            For~${(M,B) \in \mathcal{M}(W)}$, we have
            \[
                {W[X] \ast (B \cap X) = \big( M \slash (B \setminus X) \big) | X}.
            \]
    \end{enumerate}
\end{proposition}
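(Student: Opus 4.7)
The plan is to verify the four claims in order, each following from direct set-system manipulations combined with Remark~\ref{rmk:double-twist} and Propositions~\ref{prop:twisted-equivalence} and~\ref{prop:twisted1}. Part~\ref{prop-item:feasible-twist} is almost immediate: Remark~\ref{rmk:double-twist} gives $W \ast F = (M \ast B) \ast F = M \ast (B \triangle F)$, which is the second equation; by Proposition~\ref{prop:twisted-equivalence}, $M \ast (B \triangle F)$ is a twisted matroid if and only if $B \triangle F \in \mathcal{B}(M)$, equivalently (by Proposition~\ref{prop:twisted1}\ref{prop-item:feasible}) $F \in \mathcal{F}$.

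For part~\ref{prop-item:restriction}, I would check axioms~\ref{axiom:T1},~\ref{axiom:T2}, and~\ref{axiom:T3'} for $W[X] = (E, \mathcal{F}|X)$. Axiom~\ref{axiom:T1} is trivial. For~\ref{axiom:T2}, given $F_1, F_2 \in \mathcal{F}|X$ and $e \in F_1 \triangle F_2$, the element $f \in F_1 \triangle F_2$ supplied by~\ref{axiom:T2} for $W$ lies in $X$, so $F_1 \triangle \{e,f\} \subseteq X$ and hence in $\mathcal{F}|X$. For~\ref{axiom:T3'} with candidate $B \cap X$, every $F \in \mathcal{F}|X$ satisfies $F \subseteq X$, so $(B \cap X) \cap F = B \cap F$ and $(E \setminus (B \cap X)) \cap F = (E \setminus B) \cap F$, reducing the condition to the fact that $B$ is a base of $W$. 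The fundamental graph identity is immediate from matching edges to two-element feasible sets. Part~\ref{prop-item:restricted-twist} is then a direct computation: the feasible sets of $W[X] \ast F$ are $\{P \triangle F : P \in \mathcal{F},\ P \subseteq X\}$ and those of $(W \ast F)[X]$ are $\{P \triangle F : P \in \mathcal{F},\ P \triangle F \subseteq X\}$; since $F \subseteq X$, the conditions $P \subseteq X$ and $P \triangle F \subseteq X$ are equivalent.

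Part~\ref{prop-item:compatible-minor} is the main step. By~\ref{prop-item:restriction}, $B \cap X$ is a base of $W[X]$, so Proposition~\ref{prop:twisted-equivalence} tells us that $W[X] \ast (B \cap X)$ is a matroid whose set of bases is $(\mathcal{F}|X) \triangle (B \cap X)$. Using Proposition~\ref{prop:twisted1}\ref{prop-item:feasible}, these bases take the form $(B' \triangle B) \triangle (B \cap X) = B' \triangle (B \setminus X)$ for $B' \in \mathcal{B}(M)$ with $B' \triangle B \subseteq X$. The latter constraint forces $B \setminus X \subseteq B'$ and $B' \setminus X \subseteq B \setminus X$, so $B' = I \cup (B \setminus X)$ with $I := B' \cap X \subseteq X$, and then $B' \triangle (B \setminus X) = I$. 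Hence the bases reduce to $\{I \subseteq X : I \cup (B \setminus X) \in \mathcal{B}(M)\}$. Since $B \subseteq (B \setminus X) \cup X$, we have $r_M((B \setminus X) \cup X) = r(M)$ and so the rank of $X$ in $M/(B \setminus X)$ equals the full rank of $M/(B \setminus X)$, meaning that these sets are precisely the bases of $(M/(B \setminus X))|X$.

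The main obstacle is the symmetric-difference bookkeeping in~\ref{prop-item:compatible-minor}: identifying exactly which $B' \in \mathcal{B}(M)$ contribute and simplifying $(B' \triangle B) \triangle (B \cap X)$ to $B' \cap X$. The remaining parts are essentially axiom-checks once part~\ref{prop-item:feasible-twist} and the definitions are unwound.
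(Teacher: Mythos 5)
Your proof is correct and follows essentially the same route as the paper's; the main difference is that you spell out the axiom-checks in parts~\ref{prop-item:restriction}--\ref{prop-item:restricted-twist} and the base identification in part~\ref{prop-item:compatible-minor}, which the paper records as direct consequences of the definitions. One small imprecision in part~\ref{prop-item:feasible-twist}: Proposition~\ref{prop:twisted-equivalence} is a biconditional between two \emph{conjunctions} (``$W$ is a twisted matroid \emph{and} $B \in \mathcal{B}(W)$''), so by itself it does not yield the converse direction ``$W\ast F$ a twisted matroid $\Rightarrow F\in\mathcal{F}$''; the paper instead applies~\ref{axiom:T1} to $W\ast F$ to get $\emptyset\in\mathcal{F}\triangle F$ and hence $F\in\mathcal{F}$, which is the precise fix.
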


\begin{proof}
    For~\ref{prop-item:feasible-twist}, 
    suppose~${F \in \mathcal{F}}$. 
    If~${(M,B) \in \mathcal{M}(W)}$, then 
    \[
        {W \ast F = (M \ast B) \ast F = M \ast (B \triangle F)}.
    \]
    Now~${W \ast F}$ is a twisted matroid by Propositions~\ref{prop:twisted1}\ref{prop-item:feasible} and~\ref{prop:twisted-equivalence}. 
    Conversely, suppose that~${W \ast F}$ is a twisted matroid. 
    Now~${\emptyset \in \mathcal{F}(W \ast F) = \mathcal{F} \triangle F}$ by~\ref{axiom:T1}. 
    Hence~${F \in \mathcal{F}}$. 
    
    Both~\ref{prop-item:restriction} and~\ref{prop-item:restricted-twist} are trivial consequences of the definitions. 
    
    For~\ref{prop-item:compatible-minor}, 
    note that if~$B'$ is a base of~$M$ for which~${F'  := B' \triangle B \subseteq X}$, 
    then  
    \[
        {B' \cap X 
        = ((B' \triangle B) \triangle B) \cap X 
        = ((B' \triangle B) \cap X) \triangle (B \cap X) 
        = F'  \triangle (B \cap X)}, 
    \]
    and hence 
    \[
        \big\{ B' \cap X \colon B' \in \mathcal{B}(M) \textnormal{ and } B' \triangle B \subseteq X \big\} 
        = 
        \big\{ F'  \triangle (B \cap X) \colon F'  \in \mathcal{F}|X \big\}. 
    \] 
    Now since
    \[
        {\mathcal{B}\big((M \slash (B \setminus X))|X\big) = \big\{ B' \cap X \colon B' \in \mathcal{B}(M) \textnormal{ and } B' \triangle B \subseteq X \big\}}
    \]
    and
    \[
         \mathcal{F}(W[X]) \triangle (B \cap X)  
        = \big\{ F'  \triangle (B \cap X) \colon F'  \in \mathcal{F}|X \big\}, 
    \]
    we obtain~\ref{prop-item:compatible-minor}. 
\end{proof}

A twisted matroid~$U$ is called a \emph{minor} of a twisted matroid~${W = (E, \mathcal{F})}$ if there are sets~${X \subseteq E}$ and~${F \in \mathcal{F}}$ such that~${U = (W \ast F)[X]}$. 

\begin{proposition}
    \label{prop:twisted-minor-associated}
    Let~$U$ and~$W$ be twisted matroids. 
    Then the following statements are true. 
    \begin{enumerate}
        [label=(\roman*)]
        \item \label{prop-item:twisted-minor-associated1} 
            If~$U$ is a minor of~$W$, then every matroid~$M$ associated with~$W$ has a minor~$N$ associated with~$U$. 
        \item \label{prop-item:twisted-minor-associated2} 
            If some matroid~$M$ associated with~$W$ has a minor~$N$ associated with~$U$, then~$U$ is a minor of~$W$. \\ 
            In particular,~$M$ has a base~$B$ such that~${(M \ast B)[E(U)] = N \ast (B \cap E(U)) = U}$.
    \end{enumerate}
\end{proposition}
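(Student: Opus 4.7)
For part~\ref{prop-item:twisted-minor-associated1}, the idea is to use Proposition~\ref{prop:twisted-minors}\ref{prop-item:compatible-minor} as a bridge between minors of twisted matroids and minors of matroids. I would write $U = (W \ast F)[X]$ with $F \in \mathcal{F}(W)$ and $X \subseteq E(W)$, and take any $(M,B) \in \mathcal{M}(W)$. Then $B' := B \triangle F$ is a base of $M$ by Proposition~\ref{prop:twisted1}\ref{prop-item:feasible}, and $W \ast F = M \ast B'$ by Proposition~\ref{prop:twisted-minors}\ref{prop-item:feasible-twist}, so $(M,B') \in \mathcal{M}(W \ast F)$. Applying Proposition~\ref{prop:twisted-minors}\ref{prop-item:compatible-minor} to this pair with the set $X$ gives $U \ast (B' \cap X) = (M \slash (B' \setminus X)) | X$, so the minor $N := (M \slash (B' \setminus X)) | X$ of $M$ satisfies $N \ast (B' \cap X) = U$ and is therefore associated with $U$.

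For part~\ref{prop-item:twisted-minor-associated2}, suppose $(M,B) \in \mathcal{M}(W)$ and $(N,B_0) \in \mathcal{M}(U)$ with $N$ a minor of $M$. By the remark recalled in the excerpt just after the definition of a minor, I may assume $N = M \setminus D \slash C$ with $C$ independent in $M$ and $D$ independent in $M^*$. Since $D$ is independent in $M^*$ we have $r_M(E(M) \setminus D) = r(M)$, and a short rank computation then yields $\abs{B_0} = r(N) = r(M) - \abs{C}$; combined with $B_0$ being independent in $M \slash C$ and disjoint from $C$, the set $B' := B_0 \cup C$ is independent in $M$ of size $r(M)$, hence a base of $M$. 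Setting $F := B \triangle B'$, Propositions~\ref{prop:twisted1}\ref{prop-item:feasible} and~\ref{prop:twisted-minors}\ref{prop-item:feasible-twist} give $F \in \mathcal{F}(W)$ and $W \ast F = M \ast B'$. Now Proposition~\ref{prop:twisted-minors}\ref{prop-item:compatible-minor} applied to $(M,B') \in \mathcal{M}(W \ast F)$ with $X := E(U) = E(N)$ produces $(W \ast F)[E(U)] \ast (B' \cap E(U)) = (M \slash C) | E(N) = N$; since $B' \cap E(U) = B_0$, this rearranges to $(W \ast F)[E(U)] = N \ast B_0 = U$. Hence $U$ is a minor of $W$, and $B'$ is the base of $M$ witnessing the \emph{in particular} clause.

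The step needing the most care is the construction of $B'$ in part~\ref{prop-item:twisted-minor-associated2}: Proposition~\ref{prop:twisted-minors}\ref{prop-item:compatible-minor} only recovers the given $(N,B_0)$ when $B'$ is a base of $M$ satisfying \emph{both} $B' \cap E(N) = B_0$ and $B' \setminus E(N) = C$ simultaneously. The two independence reductions, on $C$ in $M$ and on $D$ in $M^*$, are precisely what make $B_0 \cup C$ a base of $M$ automatically, so that both conditions fall out for free.
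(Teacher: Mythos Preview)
Your proof is correct and follows essentially the same route as the paper's: for \ref{prop-item:twisted-minor-associated1} you pass from $(M,B)$ to $(M,B\triangle F)\in\mathcal{M}(W\ast F)$ and apply Proposition~\ref{prop:twisted-minors}\ref{prop-item:compatible-minor}, just as the paper does, and for \ref{prop-item:twisted-minor-associated2} both arguments construct a base $B'$ of $M$ with $B'\cap E(N)=B_0$ and $B'\setminus E(N)$ equal to the contracted set, then invoke Proposition~\ref{prop:twisted-minors}\ref{prop-item:compatible-minor}. The only cosmetic difference is that the paper normalises the minor via the Scum Theorem to get $N=(M\slash Z)|E(N)$ with $r(M\slash Z)=r(N)$, whereas you use the coindependent/independent reduction (Oxley, Lemma~3.3.2) already recalled in Section~\ref{sec:prelims}; both devices serve the identical purpose of ensuring $B_0\cup C$ is a base of~$M$.
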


\begin{proof}
    For~\ref{prop-item:twisted-minor-associated1}, let~${(M, B') \in \mathcal{M}(W)}$ and~${X \subseteq E}$ and~${F \in \mathcal{F}}$ such that~${U = (W \ast F)[X]}$. 
    Then  
    \[
        {M = W \ast B' 
        = W \ast (B' \triangle F \triangle F)
        = (W \ast F) \ast (B' \triangle F)}, 
    \]
    and hence for~${B := B' \triangle F}$ we have~${(M, B) \in \mathcal{M}(W \ast F)}$.
    By Proposition~\ref{prop:twisted-minors}\ref{prop-item:compatible-minor}, we have that~${U \ast (B \cap X)}$ is a minor of~$M$, as desired. 
    
    For~\ref{prop-item:twisted-minor-associated2}, suppose for ${(M,B') \in \mathcal{M}(W)}$ and~${(N,B'') \in \mathcal{M}(U)}$ we have that~$N$ is a minor of~$M$. 
    Note that by the Scum Theorem~\cite{oxley}*{Theorem~3.3.1} there is a set~${Z \subseteq E(M) \setminus E(N)}$ such that~${N = (M \slash Z) | E(N)}$ and the rank of~${M \slash Z}$ is equal to the rank of~$N$. 
    Without loss of generality~$Z$ is independent. 
    Now~$B''$ is independent in~${M \slash Z}$.
    By the equality of the ranks, ${B := B'' \cup Z}$ is a base of~$M$, 
    and so~${N = (M \slash (B \setminus E(N)))|E(N)}$. 
    Now since~${F := B \triangle B' \in \mathcal{F}(W)}$ we obtain that~${(M,B) \in \mathcal{M}(W \ast F)}$, 
    and hence by Proposition~\ref{prop:twisted-minors}\ref{prop-item:compatible-minor} 
    \[
        {(W \ast F)[E(N)] \ast (B \cap E(N)) = N = U \ast B'' = U \ast (B \cap E(N))}. 
    \]
    Therefore,~${(M \ast B)[E(U)] = (W \ast F)[E(N)] = U}$, as desired. 
\end{proof}

Lastly, let us remark that the minor relation of twisted matroids is transitive. 

\begin{proposition}
    \label{prop:twist-transitivity}
    Let~${W = (E, \mathcal{F})}$ be a twisted matroid, 
    let~${X' \subseteq X \subseteq E}$, 
    let~${F \in \mathcal{F}}$, 
    and~${F' \in \mathcal{F}(W \ast F)|X}$. 
    Then 
    \[
        {F \triangle F' \in \mathcal{F}} \ \text{ and } \ {(W \ast (F \triangle F'))[X'] = (((W \ast F)[X]) \ast F')[X']}. 
    \]
\end{proposition}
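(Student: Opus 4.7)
The plan is to derive the proposition from three earlier ingredients: the defining equation $\mathcal{F}(W\ast F)=\mathcal{F}\triangle F$, Remark~\ref{rmk:double-twist} (associativity of twists via symmetric difference), and Proposition~\ref{prop:twisted-minors}\ref{prop-item:restricted-twist} (twist and restriction commute when the twisting set lies in $\mathcal{F}|X$), supplemented by the trivial observation that restriction composes, i.e.\ $(S[X])[X'] = S[X']$ whenever $X'\subseteq X$, which is immediate from the definition $\mathcal{P}|X=\{P\subseteq X:P\in\mathcal{P}\}$.

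First I would dispatch the claim $F\triangle F'\in\mathcal{F}$. Since $F'\in\mathcal{F}(W\ast F)|X$, in particular $F'\in\mathcal{F}(W\ast F)=\mathcal{F}\triangle F$, so there is $G\in\mathcal{F}$ with $F'=G\triangle F$, giving $F\triangle F'=G\in\mathcal{F}$. As a bonus, Proposition~\ref{prop:twisted-minors}\ref{prop-item:feasible-twist} now certifies that $W\ast F$ is itself a twisted matroid, which is precisely what allows the next step.

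For the displayed identity I would apply Proposition~\ref{prop:twisted-minors}\ref{prop-item:restricted-twist} with the twisted matroid $W\ast F$ playing the role of $W$ and with $F'\in\mathcal{F}(W\ast F)|X$ playing the role of $F$, obtaining
\[
\bigl((W\ast F)[X]\bigr)\ast F' \;=\; \bigl((W\ast F)\ast F'\bigr)[X].
\]
By Remark~\ref{rmk:double-twist}, $(W\ast F)\ast F'=W\ast(F\triangle F')$, so the right-hand side equals $(W\ast(F\triangle F'))[X]$. Restricting both sides to $X'$ and using $(S[X])[X']=S[X']$ yields the claimed equality $(((W\ast F)[X])\ast F')[X']=(W\ast(F\triangle F'))[X']$.

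I do not expect any real obstacle; the argument is pure bookkeeping with symmetric differences and restrictions. The only point requiring attention is verifying the hypotheses of Proposition~\ref{prop:twisted-minors}\ref{prop-item:restricted-twist} before invoking it, namely that $W\ast F$ is a twisted matroid and that $F'\subseteq X$, both of which are guaranteed by the assumption $F'\in\mathcal{F}(W\ast F)|X$ together with $F\in\mathcal{F}$.
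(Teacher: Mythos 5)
Your proof is correct and follows essentially the same route as the paper: invoke Proposition~\ref{prop:twisted-minors}\ref{prop-item:restricted-twist} for the twisted matroid $W\ast F$ with twisting set $F'$, rewrite via Remark~\ref{rmk:double-twist}, and then restrict to $X'$. The only cosmetic difference is in establishing $F\triangle F'\in\mathcal{F}$: you unwind the definition $\mathcal{F}(W\ast F)=\mathcal{F}\triangle F$ directly, whereas the paper reaches the same conclusion by applying both directions of Proposition~\ref{prop:twisted-minors}\ref{prop-item:feasible-twist}; the two are interchangeable.
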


\begin{proof}
    By Proposition~\ref{prop:twisted-minors}\ref{prop-item:feasible-twist}, ${(W \ast F)\ast F'}$ is a twisted matroid. 
    Since ${(W \ast F) \ast F' = W \ast (F \triangle F')}$, we have that~${F \triangle F' \in \mathcal{F}}$ again by Proposition~\ref{prop:twisted-minors}\ref{prop-item:feasible-twist}.
    
    Now by Proposition~\ref{prop:twisted-minors}\ref{prop-item:restricted-twist} we have
    \[
        (((W \ast F)[X]) \ast F')[X'] 
        = (((W \ast F) \ast F')[X])[X'] 
        = (W \ast (F \triangle F'))[X']. 
        \qedhere
    \]
\end{proof}

\subsection{More on the fundamental graph and twisted matroids}

\begin{proposition}\label{prop:brualdi-krogdahl}
    Let~${W = (E, \mathcal{F})}$ be a twisted matroid and let~${X \subseteq E}$. 
    Then the following statements are true. 
    \begin{enumerate}
        [label=(\roman*)]
        \item\label{prop-item:brualdi} 
            If~$X \in \mathcal{F}$, then~${G(W)[X]}$ has a perfect matching 
            \textnormal{(Brualdi~\cite{brualdi})}. 
        \item\label{prop-item:krogdahl}
            If~${G(W)[X]}$ has a unique perfect matching, then~${X \in \mathcal{F}}$ 
            \textnormal{(Krogdahl~\cite{krogdahl})}. 
    \end{enumerate}
\end{proposition}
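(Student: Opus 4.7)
The plan is to recast both statements in classical matroid-theoretic language via the correspondence of Section~\ref{sec:twisted}, reducing them to the theorems of Brualdi and Krogdahl. Fix~${(M, B) \in \mathcal{M}(W)}$, which exists by Proposition~\ref{prop:twisted-equivalence} (for instance, take~${M := W \ast B}$ for any base~$B$ of~$W$). Setting~${N := (M/(B \setminus X))|X}$, Proposition~\ref{prop:twisted-minors}\ref{prop-item:compatible-minor} gives~${(N, B \cap X) \in \mathcal{M}(W[X])}$; combining Propositions~\ref{prop:twisted-minors}\ref{prop-item:restriction} and~\ref{prop:twisted1}\ref{prop-item:fundamental-graph} yields~${G(W)[X] = G(N, B \cap X)}$; and a short rank computation together with Proposition~\ref{prop:twisted1}\ref{prop-item:feasible} shows that, whenever~${|B \cap X| = |X \setminus B|}$ (which holds under the hypothesis of either part), ${X \in \mathcal{F}(W)}$ is equivalent to~${X \setminus B}$ being a base of~$N$. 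So both parts reduce to the classical forms: (i$'$) if~$B^*$ and~${E(N) \setminus B^*}$ are both bases of a matroid~$N$, then~${G(N, B^*)}$ has a perfect matching; (ii$'$) if~$B^*$ is a base of~$N$ with~${|B^*| = |E(N) \setminus B^*|}$ and~${G(N, B^*)}$ has a unique perfect matching, then~${E(N) \setminus B^*}$ is a base of~$N$. Statement (i$'$) is Brualdi's symmetric exchange theorem, a standard short inductive consequence of axiom~\ref{axiom:B2}, which I would either cite or reproduce directly.

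For (ii$'$) I proceed by induction on~${|B^*|}$; the case~${|B^*| = 0}$ is trivial. In the inductive step I would invoke the standard combinatorial fact that a bipartite graph with a unique perfect matching has pendant vertices on \emph{both} sides of its bipartition (proved by noting that a pendant must exist, lest an alternating cycle yield a second perfect matching, and then inducting on the graph obtained by removing such a pendant and its match). Choosing a pendant~${e^* \in E(N) \setminus B^*}$ matched to some~${b^* \in B^*}$, the fundamental circuit~${C_N(e^*, B^*) = \{b^*, e^*\}}$ has size two, so~$b^*$ and~$e^*$ are parallel in~$N$. Setting~${N' := N / b^* \setminus e^*}$, I verify that~${B^* \setminus \{b^*\}}$ is a base of~$N'$ and~${G(N', B^* \setminus \{b^*\}) = G(N, B^*) \setminus \{b^*, e^*\}}$, which inherits a unique perfect matching. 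By the induction hypothesis,~${E(N) \setminus B^* \setminus \{e^*\}}$ is a base of~$N'$; lifting back to~$N$, the set~${B_1 := (E(N) \setminus B^* \setminus \{e^*\}) \cup \{b^*\}}$ is a base of~$N$. Parallelism of~$b^*$ and~$e^*$ then gives that~${B_1 \triangle \{b^*, e^*\} = E(N) \setminus B^*}$ is also a base, completing the induction.

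The main obstacle is selecting the pendant vertex on the correct side of the bipartition: placing it on the~${E(N) \setminus B^*}$ side forces the parallel-pair structure that closes the induction by a single elementary base exchange, whereas a pendant chosen from the~$B^*$ side would yield only a degree-one element whose treatment requires a more delicate rank argument to propagate through the induction. The remaining bookkeeping---tracking the reduction through the propositions of Section~\ref{sec:twisted} and verifying the fundamental-graph identity for the minor~${N / b^* \setminus e^*}$---is routine and follows from direct calculation with fundamental circuits.
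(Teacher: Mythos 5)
The paper does not prove this proposition; it is stated with citations to Brualdi and Krogdahl and then used as known. Your reduction is correct and is precisely what those citations implicitly require: Proposition~\ref{prop:twisted-minors}\ref{prop-item:compatible-minor} identifies $W[X]$ with the matroid $N := (M/(B\setminus X))|X$ and base $B^* := B\cap X$, Propositions~\ref{prop:twisted-minors}\ref{prop-item:restriction} and~\ref{prop:twisted1}\ref{prop-item:fundamental-graph} give $G(W)[X] = G(N,B^*)$, and Proposition~\ref{prop:twisted1}\ref{prop-item:feasible} applied to $W[X]$ shows $X \in \mathcal{F}(W)$ if and only if $E(N)\setminus B^*$ is a base of $N$; so part~\ref{prop-item:brualdi} is Brualdi's symmetric exchange theorem and part~\ref{prop-item:krogdahl} is Krogdahl's theorem for the complementary pair $(B^*, E(N)\setminus B^*)$. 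Where you go beyond the paper is the self-contained induction for the Krogdahl direction, and it is sound: the pendant $e^* \in E(N)\setminus B^*$ has fundamental circuit $\{b^*,e^*\}$, giving a parallel pair; the identity $G(N/b^* \setminus e^*, B^*\setminus\{b^*\}) = G(N,B^*) - \{b^*,e^*\}$ is most cleanly seen as a second application of Propositions~\ref{prop:twisted-minors}\ref{prop-item:restriction} and~\ref{prop:twisted-minors}\ref{prop-item:compatible-minor} to $(W[X])[X\setminus\{b^*,e^*\}]$; uniqueness of the perfect matching descends because the original unique matching must use the edge $b^*e^*$; and $B_1 \triangle \{b^*,e^*\} = E(N)\setminus B^*$ is a base by parallel exchange. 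Your concern about pendants landing on the $B^*$ side is overstated --- by Proposition~\ref{prop:twisted1}\ref{prop-item:accociated-dual} one can dualise and run the same argument --- but the asymmetric choice you make is perfectly legitimate. In short, the strategy matches what the paper's citations demand, and your contribution is to make the Krogdahl half fully explicit.
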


We deduce the following two propositions easily from the above proposition.

\begin{proposition}\label{prop:uniqpm}
    Let~$M_1$ and~$M_2$ be matroids on the common ground set~$E$ sharing a common base~$B$. 
    If the fundamental graphs of~$M_1$ and~$M_2$ with respect to~$B$ are equal and have no cycles, 
    then~${M_1 = M_2}$. 
\end{proposition}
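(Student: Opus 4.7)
The plan is to translate the statement into the language of twisted matroids and then apply Proposition~\ref{prop:brualdi-krogdahl}. Define $W_i := M_i \ast B$ for $i \in \{1,2\}$. By Proposition~\ref{prop:twisted-equivalence}, each $W_i$ is a twisted matroid with $B$ as a base, and by Proposition~\ref{prop:twisted1}\ref{prop-item:fundamental-graph} the fundamental graphs coincide: $G(W_1) = G(M_1,B) = G(M_2,B) = G(W_2) =: G$. Since $\mathcal{B}(M_i) = \mathcal{F}(W_i) \triangle B$, it suffices to prove that $\mathcal{F}(W_1) = \mathcal{F}(W_2)$.

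The key combinatorial observation is that, whenever $G$ has no cycles, every induced subgraph $G[X]$ has at most one perfect matching. Indeed, $G[X]$ is again a forest, and if $N$ and $N'$ were two distinct perfect matchings of $G[X]$, then $N \triangle N'$ would be a non-empty disjoint union of even cycles alternating between $N$ and $N'$, contradicting acyclicity. Consequently, for any $X \subseteq E$, the graph $G[X]$ has a perfect matching if and only if it has a unique perfect matching.

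Combining this with the two parts of Proposition~\ref{prop:brualdi-krogdahl}, we obtain for each $i \in \{1,2\}$ the equivalence
\[
    X \in \mathcal{F}(W_i) \iff G[X] \text{ has a perfect matching},
\]
whose right-hand side depends only on $G$. Therefore $\mathcal{F}(W_1) = \mathcal{F}(W_2)$, which gives $W_1 = W_2$ and hence $M_1 = M_2$. No serious obstacle is anticipated: the argument is a clean packaging of the Brualdi--Krogdahl theorem together with the elementary uniqueness of perfect matchings in forests, routed through the correspondence between matroids and twisted matroids established earlier in the section.
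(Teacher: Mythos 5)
Your proof is correct and follows essentially the same route as the paper: both pass to the twisted matroids $M_i \ast B$, observe that induced subgraphs of a forest have at most one perfect matching, and then apply the two halves of Proposition~\ref{prop:brualdi-krogdahl} to characterize feasibility entirely in terms of $G$. The only difference is that you spell out a couple of intermediate justifications (the symmetric-difference argument for uniqueness of perfect matchings in forests, and the bijection $\mathcal{B}(M_i) = \mathcal{F}(W_i)\triangle B$) that the paper leaves implicit.
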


\begin{proof}
    Let~${X \subseteq E}$. 
    Let~$G$ denote the common fundamental graph with respect to~$B$, 
    which by Proposition~\ref{prop:twisted1}\ref{prop-item:fundamental-graph} is equal to~$G(M_1 \ast B) = G(M_2 \ast B)$. 
    Since~$G$ is a forest, every induced subgraph 
    has at most one perfect matching and so for~${i \in [2]}$
    by Proposition~\ref{prop:brualdi-krogdahl}, 
    $X$ is feasible in~${M_i \ast B}$ if and only if~${G[X]}$ has a perfect matching.
    Therefore,~${M_1 \ast B = M_2 \ast B}$, and hence~${M_1 = M_2}$.
\end{proof}

\begin{proposition}\label{prop:path-to-fan}
    Let~$n$ be a positive integer. 
    A matroid~$M$ has a minor isomorphic to~${M(F_n)}$ if and only if~$M$ has a base~$B$ such that~$G(M,B)$ has an induced path on~${2n-1}$ vertices, 
    starting and ending in~$B$. 
\end{proposition}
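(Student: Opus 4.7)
The plan is to handle both directions through the twisted-matroid machinery of Section~\ref{sec:twisted}, after pinning down a particularly convenient base of $M(F_n)$. Write the fan $F_n$ with apex $v$ and path vertices $u_1, \dots, u_n$, and let $B^{*} := \{vu_1, \dots, vu_n\}$ be the star at $v$, which is clearly a base of $M(F_n)$. The fundamental circuit of the non-tree edge $u_j u_{j+1}$ is the triangle $\{vu_j, u_ju_{j+1}, vu_{j+1}\}$, so after relabelling $b_j := vu_j$ and $e_j := u_j u_{j+1}$ the fundamental graph $G(M(F_n), B^{*})$ is precisely the path $b_1 e_1 b_2 e_2 \dots b_n$; in particular it has $2n-1$ vertices and both endpoints lie in $B^{*}$.

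For the forward direction, suppose $N \cong M(F_n)$ is a minor of $M$ and let $B''$ be the base of $N$ corresponding to $B^{*}$. Set $U := N \ast B''$, so that by Proposition~\ref{prop:twisted1}\ref{prop-item:fundamental-graph} the fundamental graph $G(U) = G(N, B'')$ is the path on $2n-1$ vertices. Proposition~\ref{prop:twisted-minor-associated}\ref{prop-item:twisted-minor-associated2} then yields a base $B$ of $M$ with $(M \ast B)[E(N)] = U$. Combining this with Proposition~\ref{prop:twisted-minors}\ref{prop-item:restriction} gives
\[
G(M, B)[E(N)] = G\bigl((M \ast B)[E(N)]\bigr) = G(U),
\]
which is the required induced path; its endpoints lie in $B$ because they occupy the larger side of the bipartition of the path, which is exactly $B \cap E(N)$.

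For the converse, let $X$ be the vertex set of the induced path $P$ in $G(M,B)$. Because $P$ is bipartite of odd length with endpoints in $B$, one has $|X \cap B| = n$ and $|X \setminus B| = n-1$; label them $b_1, \dots, b_n$ and $e_1, \dots, e_{n-1}$ along $P$. Consider the minor $N := (M \slash (B \setminus X))|X$. By Proposition~\ref{prop:twisted-minors}\ref{prop-item:compatible-minor}, $B \cap X$ is a base of $N$ and $G(N, B \cap X) = G(M,B)[X] = P$. Now transport $M(F_n)$ to the ground set $X$ via $b_j \mapsto vu_j$ and $e_j \mapsto u_ju_{j+1}$; by the initial computation the transported matroid has $B \cap X$ as a base whose fundamental graph is again $P$. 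Since $P$ is a tree, Proposition~\ref{prop:uniqpm} forces $N$ to equal the transported copy of $M(F_n)$, so $N \cong M(F_n)$.

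The argument is essentially bookkeeping once the fundamental graph of $M(F_n)$ with respect to the star base is identified and once one observes that acyclicity of that fundamental graph lets Proposition~\ref{prop:uniqpm} pin down the minor uniquely. The only mild subtlety is in the forward direction, where Proposition~\ref{prop:twisted-minor-associated}\ref{prop-item:twisted-minor-associated2} must be invoked to produce a base of $M$ whose fundamental graph restricts exactly to the prescribed path, rather than merely containing it.
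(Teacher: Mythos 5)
Your proof follows the same overall route as the paper's: reduce to the fundamental graph of the twisted matroid $N*B''$, use Proposition~\ref{prop:twisted-minor-associated}\ref{prop-item:twisted-minor-associated2} for the forward direction, and use Propositions~\ref{prop:twisted-minors}\ref{prop-item:compatible-minor} and~\ref{prop:uniqpm} for the converse. The converse direction is complete (modulo a notational slip in the direction of the transporting bijection, which should send edges of $F_n$ to $X$).

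There is, however, a gap in the forward direction. Proposition~\ref{prop:twisted-minor-associated}\ref{prop-item:twisted-minor-associated2} gives a base $B$ of $M$ with $(M \ast B)[E(N)] = U$, which indeed makes $G(M,B)[E(N)] = G(U)$ the required path. But you then assert, without argument, that the larger side of the bipartition of that path ``is exactly $B \cap E(N)$''. What is immediate is only that $B \cap E(N)$ is a base of the twisted matroid $U$ (Proposition~\ref{prop:twisted-minors}\ref{prop-item:restriction}), hence one of the two colour classes $B''$ or $E(N) \setminus B''$; if it were the smaller class $E(N) \setminus B''$, the endpoints of the path would lie outside $B$ and the conclusion would fail. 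To close the gap you need one of two further observations: either recall that in the proof of Proposition~\ref{prop:twisted-minor-associated}\ref{prop-item:twisted-minor-associated2} the base $B$ is constructed as $B'' \cup Z$ with $Z \subseteq E(M)\setminus E(N)$, so $B\cap E(N)=B''$; or note that the statement also gives $N \ast (B\cap E(N)) = U = N \ast B''$, and since $M(F_n)$ is connected and has rank $n \neq n-1 = $ corank, Proposition~\ref{prop:twisted1}\ref{prop-item:associated-connected} forces $B\cap E(N) = B''$. The paper sidesteps this entirely by twisting $B$ by $F := (B \cap E(N)) \triangle B'$ to produce $\hat{B}$ with $\hat{B}\cap E(N) = B'$, which is a cleaner way of guaranteeing the endpoints lie in the base without appealing to any special structure of $M(F_n)$.
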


\begin{proof}
    Suppose that~$M$ has a minor~$N$ isomorphic to~${M(F_n)}$.
    Let~$B'$ be the base of~$N$ corresponding to the star~$K_{1,n}$ in~${M(F_n)}$.
    Then the fundamental graph of~$N$ with respect to~$B'$ is a path
    on~${2n-1}$ vertices, starting and ending in~$B'$.
    By Proposition~\ref{prop:twisted-minor-associated}\ref{prop-item:twisted-minor-associated2},~$M$ has a base~$B$ such that~${(M \ast B)[E(N)] = N \ast (B \cap E(N))}$. 
    Now~${B \cap E(N)}$ is a base of~$N$ by Proposition~\ref{prop:twisted-equivalence}, so ${F := (B \cap E(N)) \triangle B'}$ is in~${\mathcal{F}(M \ast B)|E(N)}$, and hence in~${\mathcal{F}(M\ast B)}$. 
    It follows that~${\hat{B} := B \triangle F}$ is a base of~$M$ by Proposition~\ref{prop:twisted1}\ref{prop-item:feasible}. 
    Note that 
    \[
       \hat{B} \cap E(N) = (B \triangle ((B \cap E(N)) \triangle B')) \cap E(N) = B', 
    \]  
    and hence~${(M \ast \hat{B})[E(N)] = N \ast B'}$. 
    Thus~${G(M,\hat{B})[E(N)] = G(N,B')}$ by Propositions~\ref{prop:twisted1}\ref{prop-item:fundamental-graph} and~\ref{prop:twisted-minors}\ref{prop-item:restriction}, which is the required path. 
    
    Conversely, suppose there is a base~$B$ of~$M$ and a set~${X \subseteq E(M)}$ such that~${G(M,B)[X]}$ is a path on~${2n - 1}$ vertices,
    starting and ending in~$B$. 
    Let~${W := M \ast B}$ and~${U := (M \ast B)[X]}$. 
    Since~$U$ is a minor of~$W$, for some~${B' \in \mathcal{B}(U)}$ the matroid~${N := U \ast B'}$ is a minor of~$M$ by Proposition~\ref{prop:twisted-minor-associated}\ref{prop-item:twisted-minor-associated1}. 
    Again, by Propositions~\ref{prop:twisted1}\ref{prop-item:fundamental-graph} and~\ref{prop:twisted-minors}\ref{prop-item:restriction}, we have that 
    \[
        {G(M,B)[X] = G(U) = G(N,B')}.
    \]
    Since~${G(M,B)[X]}$ is a forest, by Proposition~\ref{prop:uniqpm}, 
    there is a unique matroid with base~$B'$ whose fundamental graph is~${G(M,B)[X]}$,
    and that matroid is isomorphic to~$M(F_n)$, as desired. 
\end{proof}

\vspace{0.2cm}

We will need the following result about the change of the fundamental graph of a twisted matroid when twisting with a feasible set, which in particular will not change for the vertices not involved in the twist. 

\begin{proposition}\label{prop:fundamental-graph-outside-twist}
    Let~${W = (E,\mathcal{F})}$ be a twisted matroid, 
    let~${F \in \mathcal{F}}$, 
    let~${e \in E \setminus F}$ such that there is no~${f \in F}$ for which~${\{e,f\}}$ is feasible, 
    and let~${x \in E}$. 
    Then~${\{e,x\} \in \mathcal{F}}$ if and only 
    if~${\{e,x\} \in \mathcal{F} \triangle F}$. 
\end{proposition}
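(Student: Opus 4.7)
The plan is to lift the statement to a matroid associated with~$W$ and track a fundamental circuit through the twist. I would first pick some ${(M,B) \in \mathcal{M}(W)}$ and set ${B' := B \triangle F}$; by Proposition~\ref{prop:twisted1}\ref{prop-item:feasible} the set~$B'$ is a base of~$M$, and by Proposition~\ref{prop:twisted-minors}\ref{prop-item:feasible-twist} we have ${W \ast F = M \ast B'}$, so ${(M,B') \in \mathcal{M}(W \ast F)}$. Applying Proposition~\ref{prop:twisted1}\ref{prop-item:feasible} on each side, the desired equivalence becomes
\[
    B \triangle \{e,x\} \in \mathcal{B}(M) \iff B' \triangle \{e,x\} \in \mathcal{B}(M).
\]
Using Proposition~\ref{prop:twisted1}\ref{prop-item:accociated-dual} to replace ${(M,B)}$ by ${(M^*, E \setminus B)}$ if needed, I may assume ${e \notin B}$, and hence also ${e \notin B'}$ since ${e \notin F}$.

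Next I would use the hypothesis to pin down the fundamental circuit ${C := C_M(e,B)}$. For ${f \in F \cap B}$ the set ${B \triangle \{e,f\} = (B \setminus \{f\}) \cup \{e\}}$ is a base of~$M$ iff ${f \in C}$; and for ${f \in F \setminus B}$ the set ${B \triangle \{e,f\}}$ has the wrong size and is never a base. Hence the non-feasibility hypothesis yields ${C \cap F \cap B = \emptyset}$, and together with ${C \subseteq B \cup \{e\}}$ and ${e \notin F}$ this gives ${C \cap F = \emptyset}$. Consequently ${C \subseteq \{e\} \cup (B \setminus F) \subseteq \{e\} \cup B'}$, and because ${B' \cup \{e\}}$ contains a unique circuit (the base~$B'$ is independent and omits~$e$), namely~${C_M(e,B')}$, we conclude that ${C_M(e,B) = C_M(e,B')}$.

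Finally I would observe that for~${e \notin B}$ and~${x \neq e}$, the set ${B \triangle \{e,x\}}$ is a base exactly when ${x \in C \setminus \{e\}}$, and analogously ${B' \triangle \{e,x\}}$ is a base exactly when ${x \in C_M(e,B') \setminus \{e\}}$; since these two sets are equal, the two base conditions are equivalent. The degenerate case ${x = e}$ is settled by~\ref{axiom:T3'}: both ${\{e\}}$ and ${\{e\} \triangle F}$ have odd size and hence lie in neither~$\mathcal{F}$ nor ${\mathcal{F} \triangle F}$. The only real subtlety is the initial reduction to ${e \notin B}$ via duality; the rest is the standard observation that a fundamental circuit disjoint from the twisted portion of the base is preserved under the twist.
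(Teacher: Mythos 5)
Your proof is correct, but it takes a genuinely different route from the paper's. The paper argues directly from the axioms of a twisted matroid: in each direction it applies \ref{axiom:T1} and \ref{axiom:T2} (once to $W$, once to $W\ast F$) to produce a $y$ with $\{e,y\}$ feasible, then uses the non-feasibility hypothesis and parity to force $y=x$; no associated matroid ever appears, and there is no need to decide which side of the bipartition $e$ lies on. Your argument instead lifts to a matroid $(M,B)\in\mathcal{M}(W)$, reduces by duality to $e\notin B$, and shows that the non-feasibility hypothesis forces the fundamental circuit $C_M(e,B)$ to be disjoint from $F$, hence preserved when passing to $B'=B\triangle F$; the desired equivalence then follows from the usual characterisation of fundamental circuits via base exchange. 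All the steps check out: the identification $C\cap F=\emptyset$, the inclusion $C\subseteq B'\cup\{e\}$, the uniqueness of the circuit in $B'\cup\{e\}$, and the parity treatment of the degenerate case $x=e$ are each correct. The paper's proof is shorter and entirely self-contained within the twisted-matroid formalism; yours is longer but makes explicit the concrete structural fact underneath the proposition, namely that the fundamental circuit of $e$ is unaffected by a pivot that avoids it, which is arguably the more transparent explanation of why the neighbourhood of $e$ in the fundamental graph is stable under the twist.
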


\begin{proof}
    If~${\{e,x\} \in \mathcal{F} \triangle F}$, then~${F' := \{e,x\} \triangle F \in \mathcal{F}}$.
    Since ${e \in F'}$, by~\ref{axiom:T1} and~\ref{axiom:T2} there is a~${y \in F'}$ such that~${\{e,y\} \in \mathcal{F}}$. 
    Now by the premise of this proposition, ${y = x}$, as desired. 
    
    If~${\{e,x\} \in \mathcal{F}}$, then applying~\ref{axiom:T1} and~\ref{axiom:T2} for ${W \ast F}$, 
    there is a ${y \in \{e,x\} \triangle F}$ for which~${\{e,y\} \in \mathcal{F} \triangle F}$. 
    So by the previous paragraph, ${\{e,y\} \in \mathcal{F}}$ and hence~${y \notin F}$. 
    But then ${y = x}$, as desired. 
\end{proof}

\vspace{0.2cm}

If a matroid property is invariant under
dualising components 
then for a twisted matroid~$W$ that property will be shared by every matroid associated with~$W$. 
So for such properties we are justified to call these properties of the twisted matroid. 

For example, we have the following proposition. 

\begin{proposition}\label{prop:twisted-matroid-properties}
    Let~${W}$ be a twisted matroid. 
    Then for all~${(M,B), (M',B') \in \mathcal{M}(W)}$ and all~${X \subseteq E(W)}$ the following statements are true. 
    \begin{enumerate}
        [label=(\roman*)]
        \item $\lambda_{M'}(X) = \lambda_{M}(X)$. 
        \item $X$ is a component of~$M'$ if and only if~$X$ is a component of~$M$. 
        \item $M'$ is connected if and only if~$M$ is connected. 
        \item The branch-depth of~$M'$ is equal to the branch-depth of~$M$. 
        \item The branch-width of~$M'$ is equal to the branch-width of~$M$. 
        \qed
    \end{enumerate}
\end{proposition}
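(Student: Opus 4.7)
The plan is to deduce items~(ii)--(v) from item~(i), since components, connectedness, branch-depth, and branch-width are all defined purely in terms of the ground set and the connectivity function. To prove~(i), I would first establish a structural claim: any $(M,B), (M',B') \in \mathcal{M}(W)$ have the same components, and for each such component $C$, either $(M'|C, B' \cap C) = (M|C, B \cap C)$ or $(M'|C, B' \cap C) = ((M|C)^*, C \setminus B)$. In other words, $M'$ is obtained from $M$ by dualising some subcollection of its components.

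The structural claim will follow by combining Propositions~\ref{prop:fundamental-graph}\ref{rmk:connectivity}, \ref{prop:twisted1}\ref{prop-item:fundamental-graph}, \ref{prop:twisted1}\ref{prop-item:associated-connected}, and~\ref{prop:twisted-minors}\ref{prop-item:compatible-minor}. By Proposition~\ref{prop:twisted1}\ref{prop-item:fundamental-graph}, $G(M,B) = G(W) = G(M',B')$. I would then argue that the components of \emph{any} matroid $M_0$ associated with $W$ are precisely the vertex sets of the connected components of $G(W)$: on one side, if $C$ is a component of $M_0$ then $G(M_0|C, B_0 \cap C) = G(W)[C]$ is connected by Proposition~\ref{prop:fundamental-graph}\ref{rmk:connectivity}; on the other side, any edge of $G(M_0, B_0)$ lies inside a single component of $M_0$ because every fundamental circuit $C_{M_0}(e, B_0)$ is a circuit of $M_0$ and therefore lies in a single component. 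Hence $M$ and $M'$ have the same components. For each such component $C$, since $\lambda_M(C) = 0$ one checks that $M \slash (B \setminus C) | C = M|C$, so by Proposition~\ref{prop:twisted-minors}\ref{prop-item:compatible-minor} we have $(M|C, B \cap C) \in \mathcal{M}(W[C])$, and similarly for $M'$. Since $M|C$ is connected, Proposition~\ref{prop:twisted1}\ref{prop-item:associated-connected} gives $\mathcal{M}(W[C]) = \{(M|C, B \cap C), ((M|C)^*, C \setminus B)\}$, pinning $(M'|C, B' \cap C)$ to one of these two options.

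With the structural claim in hand, proving~(i) reduces by iteration to the case where $M'$ arises from $M$ by dualising a single component $C$. A direct rank computation, using that $\lambda_M(C) = 0$ makes the rank function of $M$ split additively across $C$ and $E(W) \setminus C$, would yield, for every $X \subseteq E(W)$ with $X_1 := X \cap C$ and $X_2 := X \setminus C$,
\[
\lambda_M(X) = \lambda_{M|C}(X_1) + \lambda_{M|(E(W) \setminus C)}(X_2),
\]
and analogously for $\lambda_{M'}$. Combined with the identity $\lambda_N(Y) = \lambda_{N^*}(Y)$ applied to $N = M|C$, this gives $\lambda_{M'}(X) = \lambda_M(X)$, proving~(i). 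Items~(ii)--(v) then follow by unwinding definitions (components are minimal non-empty zero-sets of $\lambda$; connectedness means exactly one component; branch-depth and branch-width are defined purely through $\lambda$ and the ground set).

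The main obstacle is the structural claim, in particular the observation that the matroid components of any matroid associated with $W$ can be recovered from $W$ alone as the connected components of its fundamental graph. Once this is established, everything else is routine rank bookkeeping together with an appeal to $\lambda_N = \lambda_{N^*}$.
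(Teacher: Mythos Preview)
Your proposal is correct and follows exactly the line the paper intends: the paper does not actually give a proof of this proposition (it ends with a bare \qed), relying instead on the one-sentence remark preceding it that any two matroids associated with~$W$ differ only by dualising components, and that the listed properties are invariant under this operation. Your write-up supplies precisely the details behind that remark---identifying the components of any associated matroid with the connected components of~$G(W)$, invoking Proposition~\ref{prop:twisted1}\ref{prop-item:associated-connected} on each component, and then using $\lambda_N=\lambda_{N^*}$ together with the additivity of $\lambda$ across a separation---so there is nothing to correct or compare.
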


Motivated by this proposition, we make the following definitions for a twisted matroid~${W}$. 
Let ${(M,B) \in \mathcal{M}(W)}$ be arbitrary. 
We define the \emph{connectivity function}~$\lambda_{W}$ of~$W$ as~$\lambda_{M}$. 
A \emph{component} of~$W$ is a component of~$M$, and~$W$ is \emph{connected} if~$M$ is connected. 
We define the \emph{branch-depth} and the \emph{branch-width} of~$W$, respectively, as the branch-depth and the branch-width of~$M$, respectively. 

Given these definitions, the related results for matroids in Section~\ref{sec:prelims} also hold for twisted matroids, and we will apply them for twisted matroids without further explanation.

\section{Lollipop minors of twisted matroids}\label{sec:proof}

In this section we complete the proof of Theorem~\ref{thm:main-intro}. 
To do this, we introduce the following class of twisted matroids. 

\subsection{Lollipops}\label{subsec:lollipops}

\begin{definition}\label{def:lollipop}
    Let~$a$,~$b$ be non-negative integers. 
    A twisted matroid~$L$ on ground set~${S \dot\cup \{z\} \dot\cup C}$ 
    with fundamental graph ${G := G(L)}$
    is called an \emph{${(a,b)}$-lollipop} 
    if 
    \begin{enumerate}
        [label=(\arabic*)]
        \item ${\abs{S} \geq a}$; 
        \item $G$ is connected;
        \item ${G[S \cup \{z\}]}$ is a path with terminal vertex~$z$; 
        \item ${G[C]}$ is a connected component of~${G-z}$; and
        \item ${L[C]}$ has branch-depth at least~$b$. 
    \end{enumerate}
    We call the tuple~${(S,z,C)}$ the \emph{witness} of~$L$, 
    the twisted matroid~${L[C]}$ the \emph{candy} of~$L$, 
    and the graph~${G[S \cup \{z\}]}$ the \emph{stick} of~$L$. 
\end{definition}

In order to prove Theorem~\ref{thm:main-intro}, we prove the following theorem.

\begin{restatable}{theorem}{lollipops}\label{thm:lollipops}
    For all non-negative integers~$a$, $b$, and~$w$, 
    there is an integer~$d$ 
    such that every twisted matroid~$W$ 
    of branch-width at most~$w$ and branch-depth at least~$d$ 
    has a minor which is an~${(a,b)}$-lollipop. 
    Moreover, we may take~${d = b + 3(3w)^a}$.
\end{restatable}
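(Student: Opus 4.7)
I will prove Theorem~\ref{thm:lollipops} by induction on $a$. The guiding idea, as described in the introduction, is that the small branch-width provides small-connectivity separations inside any large subset via Lemma~\ref{lem:AXYseparation}, which I use recursively to extend an induced path in the fundamental graph while maintaining a large-branch-depth candy at one end; Lemmas~\ref{lem:Xcomponent} and~\ref{lem:XYcomponents} control the loss of branch-depth at each step.

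For the base case $a=0$, I seek a $(0,b)$-lollipop inside $W$ of branch-depth at least $b+3$. Using Lemma~\ref{lem:component} I may first pass to a connected component of $W$ whose branch-depth is at least $b+2$. Picking any element $z$, Lemma~\ref{lem:Xcomponent} (applied via a matroid associated with $W$, with $X\cup Y=\{z\}$) produces a component $C$ of the fundamental graph $G(W)-z$ for which the twisted matroid $W[C]$ has branch-depth at least $b+1$. Since $W$ is connected, $z$ has a neighbor in $C$ in $G(W)$, so the restriction $W[C \cup \{z\}]$ is the required $(0,b)$-lollipop, with witness $(\emptyset,z,C)$.

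For the inductive step, I set $b' := b + 3(3w)^{a-1}(3w-1)$, so that $b' + 3(3w)^{a-1} = b + 3(3w)^a$, and apply the inductive hypothesis to produce an $(a-1,b')$-lollipop $L_0$ with witness $(S_0,z_0,C_0)$ as a minor of $W$. By Lemma~\ref{lem:branch-width-minor-closed} the candy $L_0[C_0]$ has branch-width at most $w$, and by construction branch-depth at least $b'$. My aim is to extend the stick by one vertex: find a neighbor $z_1 \in C_0$ of $z_0$ in $G(L_0)$ and a subset $C_1 \subseteq C_0 \setminus \{z_1\}$ such that $G(L_0)[C_1]$ is connected, no vertex of $C_1$ is adjacent to $z_0$ in $G(L_0)$, $z_1$ has a neighbor in $C_1$ in $G(L_0)$, and $L_0[C_1]$ has branch-depth at least $b$. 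Because vertices of $C_0$ have no $G(L_0)$-edges to $S_0$ (as $C_0$ is a component of $G(L_0)-z_0$), the restriction $L_0[S_0 \cup \{z_0,z_1\} \cup C_1]$ is then automatically an $(a,b)$-lollipop with witness $(S_0 \cup \{z_0\},z_1,C_1)$.

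The hardest part is producing $z_1$ and $C_1$, since $z_0$ may have arbitrarily many $G(L_0)$-neighbors inside $C_0$, which rules out a direct deletion argument via Lemma~\ref{lem:Xcomponent}. I plan to apply Lemma~\ref{lem:AXYseparation} iteratively inside $L_0[C_0]$: at each step I take a current subset $A \subseteq C_0$ (initially $A=C_0$) and split it into two parts whose connectivity in $L_0[C_0]$ is less than $w$, then use Lemma~\ref{lem:XYcomponents} to keep a component of one side whose branch-depth is reduced by at most $w-1$. By choosing the target set $Z$ in Lemma~\ref{lem:AXYseparation} to be the intersection of $A$ with the $z_0$-neighborhood when that intersection is large, I arrange that each such step also shrinks the number of $z_0$-neighbors in the surviving subset by a constant factor. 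After $O((3w)^{a-1})$ iterations the neighbor count is reduced to one while the branch-depth is still at least $b+1$; the last remaining neighbor becomes $z_1$, and a component of $G(L_0)[A \setminus \{z_1\}]$ adjacent to $z_1$ (supplied by Lemma~\ref{lem:Xcomponent} after removing $z_1$) becomes $C_1$. The delicate bookkeeping that balances the iteration count against the per-step branch-depth loss of $w-1$, while simultaneously ensuring that $z_1$ is adjacent to $C_1$ and that $C_1$ meets none of $z_0$'s other neighbors, is what forces the exponential-in-$a$ bound $d = b + 3(3w)^a$.
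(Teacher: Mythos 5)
Your base case and the general strategy of using Lemma~\ref{lem:AXYseparation} to find low-connectivity separations are in the spirit of the paper, but the inductive step has a genuine gap. You propose to shrink the $G(L_0)$-neighbourhood of $z_0$ inside $C_0$ by iteratively separating off a constant fraction of it; each iteration of Lemma~\ref{lem:AXYseparation} followed by Lemma~\ref{lem:XYcomponents} costs roughly $w-1$ in branch-depth and shrinks the neighbourhood by a factor of about $2/3$. But the number of iterations required to get from an initial neighbourhood of size $m := \abs{\n_{G(L_0)}(z_0)\cap C_0}$ down to size one is about $\log_{3/2} m$, and $m$ is \emph{not} bounded in terms of $a$, $b$, $w$ --- it can be as large as $\abs{C_0}$. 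Your claim that $O((3w)^{a-1})$ iterations suffice has no support, and since the branch-depth budget you start with is $b+3(3w)^a$, independent of the matroid size, the iteration simply runs out of branch-depth on large instances. There is no way to fix this by bookkeeping alone.

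The paper's proof gets around this obstacle with an idea that is missing from your proposal: \emph{twisting to change the fundamental graph}. Rather than try to shrink the neighbourhood of a single fixed $z_0$, the paper first produces $\ell = 3w-2$ nested lollipops (Lemma~\ref{lem:nested-lollipops}) so that it has a set $Z = \{z_1,\dots,z_\ell\}$ of bounded size. It then applies Lemma~\ref{lem:AXYseparation} \emph{once} to this bounded set, and uses Lemma~\ref{lem:feasibility} to find a circuit $O$ of size at most $w$ contained in $X\subseteq Z$, in a suitable minor of either the associated matroid or its dual. Choosing the minimal $i$ with $z_i\in O$ and twisting by a feasible set $\hat F$ so that $O$ becomes the fundamental circuit of $z_i$, the neighbourhood of $z_i$ in $G(L_i\ast\hat F)$ becomes exactly $O\setminus\{z_i\}$, which has bounded size $\le w-1$ and by nestedness lies inside $C_i$. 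Thus the bound on the neighbourhood size is enforced by choosing the right base (a twist), not by deleting elements, and the branch-depth lost is $O(w)$ total rather than $O(w\log\abs{C_0})$. In short, you correctly identified the difficulty (unbounded $z_0$-degree) but the deletion/shrinking strategy cannot overcome it; the twist, together with the nested-lollipop device that makes $Z$ small, is the essential new ingredient.
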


As we noted in Proposition~\ref{prop:path-to-fan}, induced paths in the fundamental graph are the correct object to look for when looking for fan matroids as a minor of a matroid. 
So lollipops are defined in terms of a long path in the fundamental graph to recover these minors, as we note in the following corollary of Proposition~\ref{prop:path-to-fan}. 
This corollary also shows that
Theorem~\ref{thm:lollipops} implies Theorem~\ref{thm:main-intro}.

\begin{corollary}\label{cor:lollipop-fan}
    Let~$n$ be a positive integer and let~$L$ be a~${(2n,0)}$-lollipop. 
    Then every matroid associated with~$L$ contains a minor isomorphic to~${M(F_n)}$. \qed
\end{corollary}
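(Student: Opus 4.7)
The plan is to invoke Proposition~\ref{prop:path-to-fan}: given an arbitrary matroid $M$ associated with $L$, it suffices to produce a base $B$ of $M$ for which $G(M,B)$ contains an induced path on $2n-1$ vertices whose endpoints both lie in $B$.

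First I would fix any pair $(M,B) \in \mathcal{M}(L)$. By Proposition~\ref{prop:twisted1}\ref{prop-item:fundamental-graph} we have $G(M,B) = G(L)$, and as noted after Proposition~\ref{prop:weaker-axiom}, the bipartite graph $G(L)$ admits a proper $2$-colouring with $B$ as one of its colour classes. By Definition~\ref{def:lollipop} the \emph{stick} $P := G(L)[S \cup \{z\}]$ is an induced path on $\abs{S} + 1 \geq 2n + 1$ vertices, and along $P$ the vertices alternate between $B$ and $E(L) \setminus B$.

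Next, write $P$ as $v_1 v_2 \cdots v_m$ with $m \geq 2n+1$. Any subpath $Q_i := v_i v_{i+1} \cdots v_{i+2n-2}$ has its endpoints at distance $2n-2$ along $P$, hence in the same colour class. The three starting positions $i \in \{1,2,3\}$ are all available and occupy both colour classes, so some $i$ yields $v_i \in B$; for that $i$ both endpoints of $Q_i$ lie in $B$. Since $P$ is an induced subgraph of $G(L)$ and $V(Q_i) \subseteq V(P)$, the subgraph induced by $G(L)$ on $V(Q_i)$ coincides with $Q_i$, so $Q_i$ is an induced path on $2n-1$ vertices in $G(M,B)$ whose endpoints both lie in $B$. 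Applying Proposition~\ref{prop:path-to-fan} to $M$ with this base now delivers the desired $M(F_n)$-minor. No serious obstacle is anticipated: the argument is a direct unpacking of the lollipop definition combined with the identification of the fundamental graph of $L$ with $G(M,B)$ for any associated pair, together with the bipartiteness of $G(L)$ witnessed by $B$.
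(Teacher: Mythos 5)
Your proof is correct and takes the route the paper intends by leaving this as an immediate corollary of Proposition~\ref{prop:path-to-fan}: identify $G(M,B)$ with $G(L)$, observe the stick is an induced path of length at least $2n$ alternating between $B$ and its complement, and extract a subpath on $2n-1$ vertices with both endpoints in $B$. The only minor inefficiency is considering three starting positions where two consecutive ones already suffice (adjacent vertices lie in opposite colour classes), but this does not affect correctness.
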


We also remark that Theorem~\ref{thm:main-intro} implies Theorem~\ref{thm:lollipops} because for all non-negative integers~$a$
and~$b$, there is an integer~$n$ such that for some base~$B$ of~${M(F_n)}$ the twisted matroid~${M(F_n) \ast B}$ is an ${(a,b)}$-lollipop, since~$M(F_n)$ has large branch-depth. 

The reason for considering lollipops as opposed to fan matroids is that it allows an inductive approach to find lollipop minors in twisted matroids of sufficiently high branch-depth. 
If we find a lollipop whose candy has sufficiently high branch-depth, then we can iteratively find another lollipop as a minor of the candy. 
Having found a large enough number of such `nested' lollipops, the small branch-width allows us to identify the stick of one of the lollipops which we are able to `extend' into its candy and `attach' it to a deletion minor of sufficiently large branch-depth. 
This way, we are able to inductively find a lollipop with a longer stick. 

Since lollipops are defined as twisted matroids, the choice of a base of the original matroid is important. 
However, the following corollary of the results of the previous section allows us a large amount of flexibility in exchanging parts of the base of the matroid associated with the candy. 

\begin{corollary}\label{cor:lollipop-twist}
    Let~$a$ and~$b$ be non-negative integers. 
    Let~$L$ be an $(a,b)$-lollipop with 
    witness~${(S,z,C)}$ 
    and let~${F\in \mathcal{F}(L)|C}$. 
    Then~${L \ast F}$ is an ${(a,b)}$-lollipop with witness~${(S,z,C)}$. 
\end{corollary}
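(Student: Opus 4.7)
The plan is to verify each of the five conditions in Definition~\ref{def:lollipop} for the twisted matroid $L \ast F$ with the same witness $(S, z, C)$. Condition~(1) is immediate since $S$ does not change. The remaining verifications split naturally: claims involving elements of $S$ are handled via Proposition~\ref{prop:fundamental-graph-outside-twist}, while claims about $C$ or $C \cup \{z\}$ follow by expressing the relevant restriction of $L \ast F$ as a twist of a restriction of $L$, using Proposition~\ref{prop:twisted-minors}\ref{prop-item:restricted-twist}.

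For the first part, I would observe that $F \subseteq C$ and, since $G(L)[C]$ is a connected component of $G(L) - z$ by assumption, there is no edge in $G(L)$ between $S$ and $C$. In particular, for each $e \in S$ the hypothesis of Proposition~\ref{prop:fundamental-graph-outside-twist} holds, so for every $x \in E(L)$ the pair $\{e,x\}$ is feasible in $L$ if and only if it is feasible in $L \ast F$. Two consequences follow at once: $G(L \ast F)[S \cup \{z\}] = G(L)[S \cup \{z\}]$ is the required path with terminal vertex $z$ (condition~(3)), and no edges appear between $S$ and $C$ in $G(L \ast F)$.

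For the second part, since $F \subseteq C \subseteq C \cup \{z\}$ and $F \in \mathcal{F}(L)$, Proposition~\ref{prop:twisted-minors}\ref{prop-item:restricted-twist} gives $(L \ast F)[X] = L[X] \ast F$ for each $X \in \{C, C \cup \{z\}\}$. The key observation is that twisting a twisted matroid by a feasible set does not change the associated matroid, only the chosen base: if $(M, B) \in \mathcal{M}(L[X])$, then $(M, B \triangle F) \in \mathcal{M}(L[X] \ast F)$ via Propositions~\ref{prop:twisted1}\ref{prop-item:feasible} and~\ref{prop:twisted-equivalence}. Consequently, by Proposition~\ref{prop:twisted-matroid-properties}, $L[X]$ and $L[X] \ast F$ share branch-depth and connectedness. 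This gives condition~(5) directly. For condition~(4), $G((L \ast F)[C])$ is connected because $G(L)[C]$ is, and combined with the absence of $S$-to-$C$ edges this makes $G(L \ast F)[C]$ a connected component of $G(L \ast F) - z$. For condition~(2), $L[C \cup \{z\}]$ is connected because $G(L)$ being connected with $G(L)[C]$ a component of $G(L) - z$ forces $z$ to be adjacent to $C$ in $G(L)$; hence $G(L \ast F)[C \cup \{z\}]$ is connected, and since it shares $z$ with the path on $S \cup \{z\}$, the full graph $G(L \ast F)$ is connected.

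The argument is essentially a verification rather than a genuine proof, and the only delicate point is checking that the hypothesis of Proposition~\ref{prop:fundamental-graph-outside-twist} is met for every $e \in S$; this is exactly where the component structure of $G(L) - z$ from condition~(4) of Definition~\ref{def:lollipop} plays its role.
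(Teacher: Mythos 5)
Your proof is correct and uses the same key tools as the paper's: Proposition~\ref{prop:fundamental-graph-outside-twist} for the stick (and the absence of edges from~$S$ into~$C$), Proposition~\ref{prop:twisted-minors}\ref{prop-item:restricted-twist} to identify ${(L\ast F)[C]}$ with ${L[C]\ast F}$, and the invariance of the associated matroid under feasible twists. The one place you deviate is condition~(2): the paper just notes that the matroid~$M$ associated with~$L$ is also associated with~${L\ast F}$ (Proposition~\ref{prop:twisted-minors}\ref{prop-item:feasible-twist}), so connectedness of~$G(L)$ forces~$M$, and hence~$G(L\ast F)$, to be connected in one step; your local-to-global argument via~$G(L\ast F)[C\cup\{z\}]$ and pasting with the stick is valid but longer. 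Both are minor variants of the same verification.
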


\begin{proof}
    Let~${G := G(L)}$ and~${G' := G(L \ast F)}$.  
    By Proposition~\ref{prop:twisted-minors}\ref{prop-item:feasible-twist}, there is a matroid~$M$ associated with both~$L$ and~${L \ast F}$. 
    Since~$G$ is connected, so is~$M$ by Proposition~\ref{prop:fundamental-graph}, and hence so is~$G'$. 

    By Proposition~\ref{prop:twisted-minors}\ref{prop-item:restricted-twist}, ${(L \ast F)[C]}$ is equal to~${L[C] \ast F}$, and hence has branch-depth at least~$b$ and is connected. 
    
    Now by Proposition~\ref{prop:fundamental-graph-outside-twist}, the neighbourhood of each~${s \in S} $ is the same in~$G$ and~$G'$. 
    Hence ${G[S \cup \{z\}] = G'[S \cup \{z\}]}$, and no~${s \in S}$ has a neighbour in~$C$ in~$G'$. 
    Hence~${G'[C]}$ is indeed a component of~${G' - z}$. 
\end{proof}

\subsection{The induction}\label{subsec:induction}

As mentioned in the previous subsection, we aim to prove Theorem~\ref{thm:lollipops} by induction on~$a$.
For the start of the induction we consider the following lemma. 

\begin{lemma}\label{lem:induction-start}
    Let~$b$ be a non-negative integer. 
    Every twisted matroid~$W$ of branch-depth at least~${b + 2}$ has a minor which is a~${(0,b)}$-lollipop.
\end{lemma}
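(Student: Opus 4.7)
The plan is to produce the $(0,b)$-lollipop with empty stick, i.e.\ with witness of the form $(\emptyset, z, C)$, so that the lollipop axioms collapse to: $G(L)$ is connected, $G(L)[C]$ is the unique connected component of $G(L)-z$, and $L[C]$ has branch-depth at least~$b$. The $b+2$ in the hypothesis is meant to absorb two independent one-unit drops in branch-depth, one for passing to a connected component of $W$ and one for removing the chosen element $z$.

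First I would apply Lemma~\ref{lem:component} to the partition of $E(W)$ into its components. Since $W$ has branch-depth at least $b+2$, some component $E_0$ satisfies that $W[E_0]$ has branch-depth at least $b+1$. Replacing $W$ by $W[E_0]$, which is a minor, I may assume $W$ is connected of branch-depth at least $b+1$; in particular $\abs{E(W)} \geq 2$, so there is at least one element to remove.

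Next I would pick an arbitrary $z \in E(W)$ and apply Lemma~\ref{lem:Xcomponent} with $X \cup Y = \{z\}$. This yields a component $C$ of the twisted matroid $W[E(W) \setminus \{z\}]$ such that $W[C]$ has branch-depth at least $b$. By Proposition~\ref{prop:twisted-minors}\ref{prop-item:restriction}, the fundamental graph of $W[E(W)\setminus\{z\}]$ is exactly $G(W)-z$, and by Proposition~\ref{prop:fundamental-graph}\ref{rmk:connectivity} applied to the restriction to each matroid component, $C$ is precisely the vertex set of a connected component of $G(W)-z$. Because $G(W)$ itself is connected, $z$ must have a neighbour in every component of $G(W)-z$, in particular in $C$.

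Finally I would set $L := W[C \cup \{z\}]$, a minor of the original $W$, and check: $G(L) = G(W)[C \cup \{z\}]$ is connected (since $G(W)[C]$ is connected and $z$ has a neighbour in $C$); the single-vertex subgraph $G(L)[\{z\}]$ qualifies as a degenerate path with terminal vertex~$z$; $G(L)-z = G(W)[C]$ is connected, so $G(L)[C]$ is the unique connected component of $G(L)-z$; and $L[C] = W[C]$ has branch-depth at least~$b$ by construction. Hence $L$ is a $(0,b)$-lollipop with witness $(\emptyset, z, C)$. I do not expect any real obstacle here: the only subtlety is the bookkeeping that the two successive unit drops (by Lemmas~\ref{lem:component} and~\ref{lem:Xcomponent}) still leave enough branch-depth in the candy, which is exactly what forces the threshold $b+2$ rather than, say, $b+1$.
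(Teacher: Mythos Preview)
Your proposal is correct and follows essentially the same approach as the paper's proof: pass to a component via Lemma~\ref{lem:component}, delete an arbitrary element~$z$, and use Lemma~\ref{lem:Xcomponent} to find a component~$C$ of the remainder with branch-depth at least~$b$, so that the restriction to $C\cup\{z\}$ is the desired $(0,b)$-lollipop. You supply more detail than the paper on why $G(L)$ is connected, but the logical structure is identical.
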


\begin{proof}
    By Lemma~\ref{lem:component}, 
    $W$ has a component~$C$ such that~${W[C]}$ has branch-depth at least~${b+1}$. 
    Let~${z \in C}$ be arbitrary. 
    By Lemma~\ref{lem:Xcomponent}, ${W[C \setminus \{z\}]}$
    has a connected component~$C'$ of branch-depth at least~${b}$. 
    Now~${W[C' \cup \{z\}]}$ 
    is a $(0,b)$-lollipop witnessed 
    by~${(\emptyset, z, C')}$ 
    since~${G(W[C' \cup \{z\}])}$ is connected. 
\end{proof}

For the induction step, the following two lemmas are the main tools we will need. 

\begin{lemma}\label{lem:lollipop-extension}
    Let~$a$,~$b$, and~$b'$ be non-negative integers. 
    Let~$L$ be an ${(a, b)}$-lollipop with 
    witness~${(S,z,C)}$ 
    and let~${C' \subseteq C}$ be non-empty such that 
    \begin{enumerate}
        [label=(\arabic*)]
        \item ${L[C']}$ is connected and has branch-depth at least~${b'}$; and 
        \item the neighbourhood of~$z$ in~${G := G(L)}$ is disjoint from~$C'$. 
    \end{enumerate}
    Then there exist a set~${S' \supseteq S}$ and an element~${z' \in C \setminus C'}$ such that~${L[S' \cup \{z'\} \cup C']}$ is an ${(a+1,b')}$-lollipop with witness~${(S', z', C')}$. 
\end{lemma}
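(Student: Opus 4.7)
\smallskip

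The plan is to extend the existing stick of $L$ through its tip $z$ by walking along a shortest path in $G := G(L)$ from $z$ into $C'$, and to choose $z'$ to be the last vertex before entering $C'$. Since $G$ is connected and $G[C]$ is a component of $G - z$, the vertex $z$ has at least one neighbour in $C$; by hypothesis~(2), no such neighbour lies in $C'$, so $C \setminus C'$ is non-empty. Thus $G[C \cup \{z\}]$ is connected and admits a shortest path $P = v_0 v_1 \cdots v_m$ with $v_0 = z$, $v_m \in C'$, and $m \geq 2$. By minimality, $v_1, \dots, v_{m-1} \in C \setminus C'$, no $v_i$ with $i \leq m-2$ has a neighbour in $C'$, and $P$ is an induced path in $G[C \cup \{z\}]$.

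Then I will set
\[
    z' := v_{m-1}, \qquad S' := S \cup \{z, v_1, \dots, v_{m-2}\},
\]
where the displayed set of $v_i$'s is interpreted as empty when $m = 2$, and claim that $L[S' \cup \{z'\} \cup C']$ is an $(a+1,b')$-lollipop witnessed by $(S', z', C')$. Note that $|S'| = |S| + (m-1) \geq a + 1$, and by Proposition~\ref{prop:twisted-minors}\ref{prop-item:restriction} the relevant fundamental graphs are induced subgraphs of $G$, so all structural checks reduce to verifying statements about $G$.

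The checks of conditions (2)--(5) in Definition~\ref{def:lollipop} are all short. For condition~(3), concatenating the old stick $s_1 \cdots s_a z$ with $v_1 \cdots v_{m-1}$ yields a path whose two halves are each induced (the old stick by hypothesis; the $P$-portion by its shortest-path property), and there are no crossing edges because $G[C]$ is a component of $G-z$, so no edge of $G$ runs between $S$ and $C$. For condition~(4), $G[C']$ is connected by hypothesis~(1), and no vertex of $C'$ is adjacent (in $G$) to any vertex of $S'$: there are no $S$-to-$C$ edges; $z$ has no neighbour in $C'$ by hypothesis~(2); and each $v_i$ with $i \leq m-2$ has no neighbour in $C'$ by the minimality of~$P$. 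Condition~(2) then follows because the stick is connected and $z'$ is adjacent to $v_m \in C'$, linking the stick to the connected $G[C']$. Finally, for condition~(5), the iterated restriction $(L[S' \cup \{z'\} \cup C'])[C']$ equals $L[C']$ directly from the definition of restriction, and $L[C']$ has branch-depth at least $b'$ by hypothesis~(1).

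There is no genuine obstacle here; the only mild subtlety is ensuring that $P$ is induced (so the new stick is indeed an induced path) and that no interior $v_i$ of $P$ has a chord into $C'$ (so that $G[C']$ remains a full component after $z'$ is removed), both of which follow immediately from choosing $P$ to be a \emph{shortest} path from $z$ to $C'$.
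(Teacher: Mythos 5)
Your proof is correct and follows the same approach as the paper: you take a shortest path from $z$ to $C'$ in $G[C \cup \{z\}]$, set $z'$ to be the last vertex of that path before it enters $C'$, and absorb the remaining interior vertices together with $z$ into the stick. The paper's proof is much terser, essentially leaving the verification of the lollipop conditions to the reader, whereas you spell out each check, correctly, using the shortest-path property to rule out chords and edges from $\{v_1,\dots,v_{m-2}\}$ into $C'$.
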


\begin{proof}
    There is a shortest path~$P$ from~$z$ to~$C'$ in~${G[C \cup \{z\}]}$. 
    Let~$x$ be the unique vertex in~${V(P) \cap C'}$, let~$z'$ be the neighbour of~$x$ in~$P$, and let~${S' := S \cup (V(P) \setminus \{x,z'\})}$. 
    Now ${\abs{S'} \geq \abs{S} + 1 \geq a + 1}$,
    since~$z$ has no neighbour in~$C'$. 
    Hence ${L' := L[S'\cup \{z'\}\cup C']}$ is an ${(a + 1, b')}$-lollipop 
    witnessed by ${(S', z', C')}$, as desired. 
\end{proof}

If we can iteratively find lollipops in the candies of previously chosen lollipops, the next lemma will allow us to find a feasible set $F$ such that twisting by $F$ `displays' all the lollipops at the same time. 

\begin{lemma}\label{lem:nested-lollipops}
    Let~$\ell$ be a positive integer and let~$a$ and $g_\ell$ be non-negative integers. 
    Let~$W$ be a twisted matroid and let ${(g_i \colon 0 \leq i <\ell)}$ be a family of integers such that
    \begin{enumerate}
        [label=(\arabic*)]
        \item $W$ has branch-depth at least~$g_0$; 
        \item for all~${i < \ell}$, every minor of~$W$ of branch-depth at least~$g_i$ contains an ${(a,g_{i+1})}$-lollipop as a minor.
    \end{enumerate}
    Then there is a feasible set~$F$ and for each~${i \in [\ell]}$ there is a 
    set ${E_i = S_i \dot\cup \{z_i\} \dot\cup C_i}$ 
    such that 
    for~${W' := W \ast F}$ 
    the following properties hold.
    \begin{enumerate}
        [label=(\roman*)]
        \item ${L_i := W'[E_i]}$ is an ${(a,g_i)}$-lollipop witnessed 
        by~${(S_i,z_i,C_i)}$ for all~${i \in [\ell]}$; and
        \item ${E_{i+1} \subseteq C_i}$ for all~${i \in [\ell - 1]}$. 
    \end{enumerate}
\end{lemma}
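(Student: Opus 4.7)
The plan is induction on~$\ell$. The base case~${\ell = 1}$ is immediate from hypothesis~(2) applied with~${i = 0}$: since~$W$ has branch-depth at least~$g_0$ and is a minor of itself, it contains an~${(a,g_1)}$-lollipop as a minor, and by the very definition of a twisted-matroid minor this produces a feasible set~$F$ and a set~${E_1 \subseteq E(W)}$ with the required properties.

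For the inductive step, I would assume the statement for~${\ell - 1}$ and apply it to obtain a feasible set~${F \in \mathcal{F}(W)}$ together with sets~${E_1, \dots, E_{\ell-1}}$ that witness~${(a,g_i)}$-lollipops~${L_i = (W \ast F)[E_i]}$ with~${E_{i+1} \subseteq C_i}$. The twisted matroid~${W'' := (W \ast F)[C_{\ell-1}]}$ is the candy of~$L_{\ell-1}$, hence has branch-depth at least~$g_{\ell-1}$, and is a minor of~$W$. Hypothesis~(2) with~${i = \ell - 1}$ then produces a feasible set~${F'' \in \mathcal{F}(W'')}$ and a set~${E_\ell \subseteq C_{\ell-1}}$ such that~${(W'' \ast F'')[E_\ell]}$ is an~${(a,g_\ell)}$-lollipop. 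By Proposition~\ref{prop:twisted-minors}\ref{prop-item:restriction},~${F'' \in \mathcal{F}(W \ast F)}$ with~${F'' \subseteq C_{\ell-1}}$.

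The heart of the argument is to merge the two twists into~${F_{\mathrm{new}} := F \triangle F''}$. By Proposition~\ref{prop:twist-transitivity} we have~${F_{\mathrm{new}} \in \mathcal{F}(W)}$ and~${(W \ast F_{\mathrm{new}})[E_\ell] = (W'' \ast F'')[E_\ell]}$, so the new lollipop is correctly displayed by the combined twist. The main obstacle is verifying that this change of twist does not destroy the earlier lollipops. For this I would first note, by telescoping the nesting~${E_{j+1} \subseteq C_j \subseteq E_j}$ given by the inductive step (together with the trivial inclusion~${C_{\ell-1} \subseteq E_{\ell-1}}$), that~${C_{\ell-1} \subseteq C_i}$ for every~${i \in [\ell-1]}$; hence~${F'' \subseteq C_i}$ for every such~$i$. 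Applying Proposition~\ref{prop:twist-transitivity} once more yields~${(W \ast F_{\mathrm{new}})[E_i] = L_i \ast F''}$, and since~${F'' \in \mathcal{F}(L_i)|C_i}$ lies entirely inside the candy of~$L_i$, Corollary~\ref{cor:lollipop-twist} guarantees that~${L_i \ast F''}$ is still an~${(a,g_i)}$-lollipop with the same witness~${(S_i, z_i, C_i)}$. Setting~${W' := W \ast F_{\mathrm{new}}}$ and~${L_i := W'[E_i]}$ then completes the induction, the only bookkeeping being to ensure that the containments~${F'' \subseteq C_i}$ really are the hypothesis needed to invoke Corollary~\ref{cor:lollipop-twist} for each previously constructed lollipop.
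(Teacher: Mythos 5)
Your proof is correct and, while organized as an external induction on~$\ell$ rather than the paper's direct construction with a telescoping internal claim, it relies on exactly the same key tools (Proposition~\ref{prop:twist-transitivity} for merging twists and Corollary~\ref{cor:lollipop-twist} for preserving earlier lollipops) and the same central observation that the newly introduced twist lies inside every previous candy. The two arguments are essentially the same; yours is a cleaner repackaging of the paper's bookkeeping.
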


\begin{proof}
    Let~${W_0 := W}$ and let~${C_0 := E(W)}$. 
    For~${i \in [\ell]}$ let~$L'_i$ be an ${(a,g_i)}$-lollipop with candy~$W_i$ such that~$L'_i$ is a minor of~$W_{i-1}$.
    Note that~$L'_i$ exists by the premise of the lemma. 
    Let~${(S_i,z_i,C_i)}$ be the witness of~$L'_i$ and let~${E_i := S_i \cup \{z_i\} \cup C_i}$.

    Let~${F'_0 := \emptyset}$. 
    For~${i \in [\ell]}$, let ${F_{i-1} \in \mathcal{F}(W_{i-1})}$ be such that ${L'_i = (W_{i-1} \ast F_{i-1})[E_i]}$, 
    and recursively define~${F'_i := F'_{i-1} \triangle F_{i-1}}$. 
    We now prove the following.
    \begin{claim*}\label{clm:nested-lollipop-claim}
        For~${i \in [\ell]}$, we have
        \begin{enumerate}
            [label=(\alph*)]
            \item\label{item:F'-feasibility} ${F'_i \in \mathcal{F}(W)}$,
            \item\label{item:big-triangle} ${L'_i = (W \ast F'_i)[E_i]}$, and
            \item\label{item:F-F'-containment} ${F'_{\ell-i} \triangle F'_\ell \subseteq C_{\ell-i}}$.
        \end{enumerate}
    \end{claim*}
    
    \begin{proof}[Proof of Claim]
        For ${i = 1}$, \ref{item:F'-feasibility} and~\ref{item:big-triangle} follow from the fact that~${F'_1 = F_0}$. 
        For~${i > 1}$, assume inductively that~${F'_{i-1} \in \mathcal{F}(W)}$ and~${L'_{i-1} = (W \ast F'_{i-1})[E_{i-1}]}$. 
        Since~${W_{i-1} = L'_{i-1}[C_{i-1}]}$, we have~${L'_{i} = (L'_{i-1} \ast F_{i-1})[E_i]}$.
        Therefore~\ref{item:F'-feasibility} and~\ref{item:big-triangle} follow from Proposition~\ref{prop:twist-transitivity}. 
        
        For~${i = 1}$,~\ref{item:F-F'-containment} follows from the fact that~${F'_\ell=F'_{\ell-1}\triangle F_{\ell-1}}$, and so~${F'_{\ell-1}\triangle F'_\ell=F_{\ell-1}}$. 
        For~${i > 1}$, assume by induction that~${F'_{\ell-(i-1)} \triangle F'_\ell \subseteq C_{\ell-(i-1)}}$. 
        Since~${F'_{\ell-(i-1)} = F'_{\ell-i} \triangle F_{\ell-i}}$, we have~${F'_{\ell-i} = F'_{\ell-(i-1)} \triangle F_{\ell-i}}$ and hence~$F'_{\ell-i} \triangle F'_\ell = (F'_{\ell-(i-1)} \triangle F'_{\ell}) \triangle F_{\ell-i}$. 
        Hence,~\ref{item:F-F'-containment} follows from the inductive hypothesis and the fact that both~$C_{\ell-(i-1)}$ and~$F_{\ell-i}$ are subsets of~$C_{\ell-i}$. 
    \end{proof}
    
    Define~${F := F'_\ell}$. 
    For~${i \in [\ell]}$, we have ${F \triangle F'_i \in \mathcal{F}(W \ast F)}$ by~\ref{item:F'-feasibility}. Therefore, by~\ref{item:big-triangle} and~\ref{item:F-F'-containment}, we have 
    \begin{align*}
        L'_i 
        &= (W \ast F'_i)[E_i]\\
        &= (W \ast (F'_i \triangle (F \triangle F)))[E_i]\\
        &= ((W \ast F)[E_i]) \ast (F \triangle F'_i).
    \end{align*}
    Hence, by Corollary~\ref{cor:lollipop-twist}, ${L_i := (W \ast F)[E_i]}$ is an ${(a,g_i)}$-lollipop witnessed by~${(S_i,z_i,C_i)}$, as required.
\end{proof}

Combining these two lemmas will be the heart of the induction step, as noted in the following corollary of the previous two lemmas. 

\begin{corollary}\label{cor:induction-step}
    In the situation of Lemma~\ref{lem:nested-lollipops}, additionally let~$b$ be a non-negative integer and assume that 
    \begin{enumerate}
        [label=$(\ast)$]
        \item\label{item:step-assumption} 
            there is an~${i \in [\ell]}$, a set~${C \subseteq C_\ell\subseteq C_i}$, and a feasible set~${\hat{F} \in \mathcal{F}(L_i)|C_i}$ 
            such that 
            \begin{enumerate}
                [label=(\arabic*)]
                \item $(L_i \ast \hat{F})[C]$ is connected and has branch-depth at least~$b$; and
                \item the neighbourhood of~$z$ in $G(L_i \ast \hat{F})$ is disjoint from~$C$.
            \end{enumerate}
    \end{enumerate}
    Then~$W$ contains an ${(a+1,b)}$-lollipop as a minor. \qed
\end{corollary}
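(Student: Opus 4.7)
The plan is to directly assemble the conclusion from Lemma~\ref{lem:nested-lollipops}, Corollary~\ref{cor:lollipop-twist}, and Lemma~\ref{lem:lollipop-extension}, and then use Proposition~\ref{prop:twist-transitivity} at the end to verify that the $(a+1,b)$-lollipop we produce is indeed a minor of $W$ in the twisted-matroid sense.

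First I would invoke Lemma~\ref{lem:nested-lollipops} to obtain the feasible set $F \in \mathcal{F}(W)$ and, for each $j \in [\ell]$, the set $E_j = S_j \cup \{z_j\} \cup C_j$ such that $L_j := (W \ast F)[E_j]$ is an $(a,g_j)$-lollipop witnessed by $(S_j,z_j,C_j)$ with $E_{j+1} \subseteq C_j$. Let $i$, $C$, and $\hat{F}$ be as provided by hypothesis~\ref{item:step-assumption}. Since $\hat{F} \in \mathcal{F}(L_i)|C_i$, Corollary~\ref{cor:lollipop-twist} implies that $L_i \ast \hat{F}$ is also an $(a,g_i)$-lollipop with the same witness $(S_i,z_i,C_i)$. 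I would then apply Lemma~\ref{lem:lollipop-extension} to $L := L_i \ast \hat{F}$, with our $C$ playing the role of $C'$ in the lemma and $b' := b$; the two hypotheses of that lemma are precisely the two conditions of~\ref{item:step-assumption} (using $z = z_i$). This yields a set $S' \supseteq S_i$ and an element $z' \in C_i \setminus C$ such that
$$U := (L_i \ast \hat{F})[S' \cup \{z'\} \cup C]$$
is an $(a+1,b)$-lollipop.

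It remains to exhibit $U$ as a minor of $W$. Inspection of the proof of Lemma~\ref{lem:lollipop-extension} shows that $S'$ is obtained from $S_i$ by adjoining some vertices of a path lying inside $G[C_i \cup \{z_i\}]$, so $S' \cup \{z'\} \cup C \subseteq E_i$. Since additionally $\hat{F} \in \mathcal{F}(W \ast F)|E_i$, I can apply Proposition~\ref{prop:twist-transitivity} with the feasible sets $F$ and $\hat{F}$ and the nested subsets $E_i \supseteq S' \cup \{z'\} \cup C$ to obtain $F \triangle \hat{F} \in \mathcal{F}(W)$ together with
$$U = \bigl(W \ast (F \triangle \hat{F})\bigr)\bigl[S' \cup \{z'\} \cup C\bigr],$$
which is exactly the definition of $U$ being a minor of $W$. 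I do not anticipate any real obstacle in executing this plan: the only small bookkeeping point is the containment $S' \cup \{z'\} \cup C \subseteq E_i$ needed to invoke Proposition~\ref{prop:twist-transitivity}, and this is immediate from the construction of $S'$ in Lemma~\ref{lem:lollipop-extension}.
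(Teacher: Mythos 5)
Your proof is correct and is precisely the intended elaboration of what the paper leaves as a one-line \texttt{\textbackslash qed}: twist $L_i$ by $\hat F$ (Corollary~\ref{cor:lollipop-twist}), extend the stick (Lemma~\ref{lem:lollipop-extension}), and then verify the result is a minor of $W$ via Proposition~\ref{prop:twist-transitivity}, using the containment $S' \cup \{z'\} \cup C \subseteq E_i$ and the fact that $\hat F \in \mathcal{F}(L_i)|C_i \subseteq \mathcal{F}(W \ast F)|E_i$.
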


Up to this point, we have not used the fact that the twisted matroid has bounded branch-width. 
In the next subsection we will prove the following lemma, which will complete the proof of Theorem~\ref{thm:lollipops}. 

\begin{restatable}{lemma}{step}\label{lem:induction-step-possible}
    Let~${b \geq 0}$ and~${w > 2}$ be integers. 
    Suppose we are in the situation of Lemma~\ref{lem:nested-lollipops} with ${\ell := 3 w -2}$ and~${g_\ell := b + 2w -1}$. 
    If~$W$ has branch-width at most~$w$, then we satisfy assumption~\ref{item:step-assumption} from Corollary~\ref{cor:induction-step}. 
\end{restatable}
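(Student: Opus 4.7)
The plan is to apply Lemma~\ref{lem:AXYseparation} once to $W' := W \ast F$, identify a suitable index $i$ via a matroid crossing argument, and then extract a high-branch-depth component from the candy $W'[C_\ell]$ using Lemma~\ref{lem:XYcomponents}. Apply Lemma~\ref{lem:AXYseparation} to $W'$ with $Z := \{z_1, \ldots, z_\ell\}$ and $k := w - 1$; this is valid since $|Z| = 3w - 2 = 3(w-1) + 1$. We obtain a partition $(X, Y)$ of $E(W')$ with $\lambda_{W'}(X) \leq w - 1$ and $|Z \cap X|, |Z \cap Y| \geq w$. Fix any $(M, B) \in \mathcal{M}(W')$; for each $i$, the fundamental (co)circuit of $z_i$ with respect to $B$ equals $\{z_i\} \cup N_i$, where $N_i$ is the neighborhood of $z_i$ in $G(W')$ intersected with $C_\ell$.

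The heart of the argument is the following crossing bound: among the $z_i \in X$, at most $\lambda_M(X)$ of them have $N_i \cap Y \neq \emptyset$. Writing $T^+$ and $T^-$ for the crossing indices with $z_i \in B$ and with $z_i \in E \setminus B$ respectively, a matroid exchange argument produces, for each $i \in T^+$, distinct elements $e_i \in (Y \setminus B) \cap N_i$ such that $(B \cap Y) \cup \{e_i : i \in T^+\}$ is independent in $M$, yielding $|T^+| \leq r_M(Y) - |B \cap Y|$. Dualizing gives $|T^-| \leq r_{M^*}(Y) - |(E \setminus B) \cap Y|$, and the identity $r_{M^*}(Y) = r_M(X) + |Y| - r_M(E)$ collapses the sum to $|T^+| + |T^-| \leq \lambda_M(X) \leq w - 1$. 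Since $|Z \cap X| \geq w$, some $i^* \in Z \cap X$ satisfies $N_{i^*} \subseteq X$; symmetrically there is $j^* \in Z \cap Y$ with $N_{j^*} \subseteq Y$.

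Finally, apply Lemma~\ref{lem:XYcomponents} to $W'[C_\ell]$ with the partition $(C_\ell \cap X, C_\ell \cap Y)$. The connectivity function is minor-monotone in the sense that $\lambda_N(A \cap E(N)) \leq \lambda_M(A)$ for any minor $N$ of a matroid $M$ and any $A \subseteq E(M)$, which follows by iterating submodularity through single deletions and contractions. This yields $\lambda_{W'[C_\ell]}(C_\ell \cap X) \leq \lambda_{W'}(X) \leq w - 1$. Since $w \geq 3$ (so $k := w - 1 \geq 2$), Lemma~\ref{lem:XYcomponents} together with the bound that $W'[C_\ell]$ has branch-depth at least $b + 2w - 1$ produces a connected component $C$ of $W'[C_\ell \cap X]$ or of $W'[C_\ell \cap Y]$ of branch-depth at least $(b + 2w - 1) - (w - 1) = b + w \geq b$. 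Choose $i := i^*$ if $C \subseteq C_\ell \cap Y$ and $i := j^*$ otherwise, and take $\hat{F} := \emptyset$; then $(L_i \ast \hat{F})[C]$ is connected with branch-depth at least $b$, and the neighborhood of $z_i$ in $G(L_i \ast \hat{F}) = G(L_i)$ is disjoint from $C$, verifying $(\ast)$.

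The main obstacle is the crossing bound: the distinctness of the replacement elements $e_i$ (via a parallelism argument in the contraction $M/(B \setminus T^+)$, where $\{z_i, e_i\}$ becomes a circuit and two coincident $e_i$'s would force an element of $T^+$ into a circuit with another) and the rank identity that collapses $|T^+| + |T^-|$ to $\lambda_M(X)$ are both standard matroid facts, but they must be established carefully; the minor-monotonicity of $\lambda$ is also a routine but non-trivial case analysis.
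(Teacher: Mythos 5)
Your approach diverges from the paper's and has a genuine gap at the central ``crossing bound.'' The claim that at most $\lambda_{W'}(X)$ of the $z_i \in Z \cap X$ have their (co)circuit neighbourhood meeting $Y$ is false as a matroid statement. For a counterexample, take a matroid with base $B = \{z_1, \ldots, z_k, b_1, \ldots, b_m\}$ and a single non-basis element $e$ whose fundamental circuit is $\{e, z_1, \ldots, z_k\}$ (for instance, represent $z_1, \ldots, z_k, b_1, \ldots, b_m$ by linearly independent vectors over $\mathbb{R}$ and set $e := z_1 + \cdots + z_k$). With $X := \{z_1, \ldots, z_k\}$ and $Y := \{b_1, \ldots, b_m, e\}$ one has $\lambda(X) = 1$, yet every $z_i$ has $e$ in its fundamental cocircuit, so all $k$ indices lie in $T^+$, while $r_M(Y) - |B \cap Y| = 1$. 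The exchange argument also cannot produce distinct replacements: $(Y \setminus B) \cap N_i = \{e\}$ for every $i$, and in $M / (B \setminus \{z_1, \ldots, z_k\})$ the circuit through $e$ is $\{e, z_1, \ldots, z_k\}$, not a parallel pair, so nothing forbids several $z_i$'s from sharing the single replacement $e$. (Your identification of the fundamental (co)circuit of $z_i$ with $\{z_i\} \cup (N_{G(W')}(z_i) \cap C_\ell)$ is also inaccurate---the full neighbourhood is what matters---but that is a secondary slip.)

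The paper avoids this trap: it does not hope for a $z_i$ whose neighbourhood is already one-sided. After Lemma~\ref{lem:lollipop-bridge} (which, internally, does use Lemma~\ref{lem:AXYseparation} and Lemma~\ref{lem:XYcomponents} in roughly the way you do) produces $X \subseteq Z$ of size at least $w$ and a connected, high-branch-depth $Y \subseteq C_\ell$ with $\lambda_{W'[X \cup Y]}(X) < w$, it invokes Lemma~\ref{lem:feasibility} to extract a circuit $O \subseteq X \setminus B$ of size at most $w$ in a suitable contraction. Taking the minimal $i$ with $z_i \in O$ gives $O \setminus \{z_i\} \subseteq \{z_{i+1}, \ldots, z_\ell\} \subseteq C_i$, and a \emph{nontrivial} base exchange $\hat F$ then makes $O$ the fundamental circuit of $z_i$, forcing its new neighbourhood to avoid the chosen candy component. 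Your choice $\hat F = \emptyset$ is exactly where the control over the neighbourhood of $z_i$ is surrendered; Lemma~\ref{lem:feasibility} and the base change it licences are the ingredients you are missing.
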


\subsection{Proof of Lemma~\ref{lem:induction-step-possible}}
\label{subsec:morelemmas}

The following two lemmas are the final tools we will need for this proof.

\begin{lemma}\label{lem:lollipop-bridge}
    Let~${w > 2}$,~${k> 0}$, and~${b \geq 0}$ be integers, let~$W$ be a twisted matroid of branch-width at most~$w$ and let~$Z$ and~$C$ be disjoint subsets of~${E(W)}$ 
    such that~${\abs{Z} \geq 3k + 1}$ and~$W[C]$ has branch-depth at least~${b + w -1}$. 
    Then for some~${X \subseteq Z}$ and~${Y \subseteq C}$, the following hold. 
    \begin{enumerate}
        [label=(\roman*)]
        \item ${\abs{X} \geq k + 1}$;
        \item ${W[Y]}$ is connected and has branch-depth at least~$b$;
        \item ${\lambda_{W[X \cup Y]}(X) < w}$.
    \end{enumerate}
\end{lemma}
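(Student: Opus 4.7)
The plan is to combine Lemma~\ref{lem:AXYseparation}, which uses the branch-width bound to split $Z$, with Lemma~\ref{lem:XYcomponents}, which uses the branch-depth bound on $W[C]$ to find a component of high branch-depth on one side of that split. The bridge between them is a submodularity calculation showing that the $W$-connectivity value of the split bounds the $W[C]$-connectivity value of its trace on $C$.

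First I would apply Lemma~\ref{lem:AXYseparation} to $W$, the set $Z$, and the integer $k$, using $\abs{Z}\geq 3k+1$ and $\bw(W)\leq w$, to obtain a partition $(A,B)$ of $E(W)$ with $\lambda_W(A)<w$ and $\abs{Z\cap A},\abs{Z\cap B}\geq k+1$. Next, I would establish the inequality
\[
    \lambda_{W[C]}(A\cap C)\ \leq\ \lambda_W(A),
\]
by a standard submodularity computation: writing $r$ for the rank function of $W$, submodularity gives $r(A\cap C)+r(A\cup C)\leq r(A)+r(C)$, $r(B\cap C)+r(B\cup C)\leq r(B)+r(C)$, and $r(A\cup C)+r(B\cup C)\geq r(C)+r(E)$; summing and rearranging yields $r(A\cap C)+r(B\cap C)-r(C)\leq r(A)+r(B)-r(E)$.

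With this in hand, I would apply Lemma~\ref{lem:XYcomponents} to $W[C]$ with the restriction-minors $N_1:=W[A\cap C]$ and $N_2:=W[B\cap C]$. If every component of $N_1$ and $N_2$ had branch-depth at most $b-1$, the lemma combined with the bound above would give
\[
    \bd(W[C])\ \leq\ \max(b-1+w-1,\,b-1+2)\ =\ b+w-2,
\]
where the final equality uses $w>2$, contradicting the hypothesis $\bd(W[C])\geq b+w-1$. Therefore, after possibly swapping $A$ and $B$, some component $Y$ of $W[A\cap C]$ has branch-depth at least $b$; by definition of component, $W[Y]$ is connected.

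Finally I would set $X:=Z\cap B$, so $\abs{X}\geq k+1$, and verify the last bound. Since $X\subseteq B$ and $Y\subseteq A$, the partition $(A,B)$ of $E(W)$ restricts to the partition $(Y,X)$ of $X\cup Y$; hence repeating the submodularity argument above with $X\cup Y$ in place of $C$ yields
\[
    \lambda_{W[X\cup Y]}(X)\ \leq\ \lambda_W(B)\ =\ \lambda_W(A)\ <\ w,
\]
where the middle equality is~\ref{item:f2}. The main obstacle is the interplay between the two notions of connectivity: one must verify that the branch-width separation of $Z$ (which is a statement about $\lambda_W$) is automatically compatible with the branch-depth decomposition of $W[C]$ (which needs control on $\lambda_{W[C]}$), and the condition $w>2$ enters precisely to make the resulting arithmetic yield a strict contradiction in Lemma~\ref{lem:XYcomponents}.
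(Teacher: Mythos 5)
Your proof is correct and follows essentially the same route as the paper: apply Lemma~\ref{lem:AXYseparation} to split $Z$, apply Lemma~\ref{lem:XYcomponents} to $W[C]$ to extract a high-branch-depth component on one side, and verify the connectivity bound on $W[X\cup Y]$. The only cosmetic differences are that the paper first restricts to $W[Z\cup C]$ before invoking Lemma~\ref{lem:AXYseparation} and relies on the (implicit) monotonicity of $\lambda$ under restriction rather than spelling out the submodularity computation you give, so the two arguments coincide in substance.
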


\begin{proof}
    Since~$W$ has branch-width at most~$w$, so does~${W' := W[Z \cup C]}$ by Lemma~\ref{lem:branch-width-minor-closed}. 
    Therefore, by Lemma~\ref{lem:AXYseparation}, 
    there exists a bipartition~${(X',Y')}$ of~${E(W')}$ with~${\lambda_{W'}(X') < w}$ 
    such that ${\abs{Z \cap X'} > k}$ 
    and~${\abs{Z \cap Y'} > k}$. 
    Now we can apply Lemma~\ref{lem:XYcomponents} to the matroids associated with~${W[X' \cap C]}$ and~${W[Y' \cap C]}$ as obtained from Proposition~\ref{prop:twisted-minor-associated}\ref{prop-item:twisted-minor-associated1}. 
    Without loss of generality~$W[Y' \cap C]$ 
    has a component~$Y$ of branch-depth at least~$b$. 
    Let~${X := X' \cap Z}$. 
    Since~${\lambda_{W'}(X') < w}$, it follows from Lemma~\ref{lem:connfunction} 
    that~${\lambda_{W[X \cup Y]}(X) < w}$. 
\end{proof}

\begin{lemma}\label{lem:feasibility}
    Let~${w > 2}$ be an integer and let~$W$ be a twisted matroid. 
    Then for every bipartition~${(X,Y)}$ of~${E(W)}$ with ${\abs{X} \geq w }$ and~${\lambda_W(X) < w}$ there is a base $B$ of $W$ and a set~${O \subseteq X \setminus B}$ of size at most~${w}$ such that~$O$ is a circuit in the matroid~${(W \ast B)\slash (B\cap X)}$.
\end{lemma}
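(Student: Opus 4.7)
The plan is to choose a base $B$ of $W$ so that $X$ becomes dependent in the associated matroid $M := W \ast B$, and then harvest a circuit of size at most $w$ in $M/(B \cap X)$ using the rank-deficiency of $X$ together with the bound $\lambda_W(X) < w$. The key elementary identity I will lean on is that, for any matroid $N$ on a ground set $C$ and any $S \subseteq C$,
\[
    r_N(S) + r_{N^*}(S) = \lambda_N(S) + \abs{S},
\]
so $S$ is independent in both $N$ and $N^*$ precisely when $\lambda_N(S) = \abs{S}$.

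First I would fix any $(M_0, B_0) \in \mathcal{M}(W)$ and decompose $W$ along its components $C_1, \dots, C_k$. The bases of $W$ are exactly the sets obtained by choosing, for each~$i$, one of the two bipartition classes of the connected graph $G(W)[C_i]$; correspondingly, the restriction $M|C_i$ is either $M_0|C_i$ or its dual. Writing $X_i := X \cap C_i$, additivity of $\lambda$ over components together with the hypothesis $\abs{X} \geq w > \lambda_W(X)$ forbids $\lambda_W(X_i) = \abs{X_i}$ from holding for every~$i$, so some index $i$ satisfies $\lambda_W(X_i) < \abs{X_i}$. The identity above then lets me pick the base restriction on~$C_i$ so that $X_i$ is dependent in $M|C_i$; making any choice on the remaining components yields a base $B$ of $W$ such that $X$ is dependent in $M$.

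Second, with $B$ chosen, set $a := r_M(X_i) - \abs{B \cap X_i}$. Expanding $\lambda_{M|C_i}(X_i)$ along the base $B \cap C_i$ of $M|C_i$ gives the decomposition $\lambda_{M|C_i}(X_i) = a + (r_M(Y_i) - \abs{B \cap Y_i})$, so $a \leq \lambda_W(X_i) \leq \lambda_W(X) < w$ and in particular $a + 1 \leq w$. Dependence of $X_i$ in $M|C_i$ gives $\abs{X_i \setminus B} \geq a + 1$, so I can take any $T \subseteq X_i \setminus B$ with $\abs{T} = a + 1$. A short rank computation using the direct-sum decomposition of $M$ across components shows that $r_{M/(B \cap X)}(T) \leq a < \abs{T}$, so $T$ is dependent in $M/(B \cap X)$ and must contain a circuit $O$ with $O \subseteq T \subseteq X \setminus B$ and $\abs{O} \leq a + 1 \leq w$, as required.

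I expect the first step to be the main obstacle: ``base of $W$'' is a far more restricted notion than ``base of some associated matroid $M_0$'', since the only freedom lies in swapping each bipartition class of a component, giving at most $2^k$ choices rather than the full base family of $M_0$. In particular, one cannot simply arrange $B \cap X$ to be a base of $M|X$. The component-wise argument combined with the identity linking independence in $N$ and $N^*$ to $\lambda_N$ is precisely what extracts a workable $B$ from this limited freedom; the second step is then essentially forced by $\lambda_W(X) < w$.
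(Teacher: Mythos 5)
Your proof is correct, and it reaches the result via a genuinely different route, so a comparison is worthwhile. The paper's argument never decomposes into components. It fixes an arbitrary base $B_1$ of~$W$, sets $B_2 := E(W)\setminus B_1$ (automatically a base of~$W$ by (T3)), and expands
\[
\lambda_W(X) = r_{M_1/(B_1\cap X)}(X\setminus B_1) + r_{M_2/(B_2\cap X)}(X\setminus B_2),
\]
where $M_i := W\ast B_i$; this is exactly your identity $r_N(S)+r_{N^*}(S)=\lambda_N(S)+\abs{S}$ applied with $N=M_1$ and $S=X$. Since this sum is $<w\leq\abs{X}=\abs{X\setminus B_1}+\abs{X\setminus B_2}$, at least one term is strictly less than the corresponding $\abs{X\setminus B_i}$; for that choice of~$B$, the set $X\setminus B$ is dependent in $(W\ast B)/(B\cap X)$ and has rank at most $\lambda_W(X)<w$ there, so it contains a circuit of size at most~$w$, and the proof is done. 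You instead decompose $W$ into its matroid components, use additivity of $\lambda$ across components to find one component $C_i$ with $\lambda_W(X\cap C_i)<\abs{X\cap C_i}$, flip the bipartition class of just that component, and harvest a circuit inside $C_i$ via a careful rank count in the direct sum; all of this is correct. Your remark that a ``base of $W$'' offers far less freedom than a base of an associated matroid is also correct, and your component-wise flipping is a careful way of navigating that constraint. But the paper's version shows the constraint is not really an obstacle: the global complement $E(W)\setminus B_1$ already supplies the needed dichotomy, so only two bases, rather than $2^k$, ever need to be examined, and the component decomposition can be dispensed with entirely.
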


\begin{proof}
    Let~$B_1$ be a base of~$W$. 
    We set~${B_2 := E(W) \setminus B_1}$, as well as ${M_1 := W \ast B_1}$ and ${M_2 := W \ast B_2}$. 
    Note that~${M_1^* = M_2}$. 
    Now we observe that 
    \begin{align*}
        \lambda_{W}(X) 
            &= \lambda_{M_1}(X) 
            = r_{M_1}(X) + r_{M_2}(X) - \abs{X}\\ 
            &= (r_{M_1}(X) - \abs{X \cap B_1}) + (r_{M_2}(X) - \abs{X \cap B_2})\\ 
            &= r_{M_1 \slash (B_1 \cap X)}(X \setminus B_1) + r_{M_2 \slash (X \cap B_2)}(X \setminus B_2). 
    \end{align*}
    Hence~${r_{M_1 \slash (B_1 \cap X)}(X \setminus B_1) + r_{M_2 \slash (X\cap B_2)}(X \setminus B_2) < w}$. 
    Since~${\abs{X} \geq w}$, for some base ${B \in \{ B_1, B_2 \}}$ and for~${M := W \ast B}$, 
    we have that~${\abs{X \setminus B} \geq r_{M \slash (B \cap X)}(X \setminus B) + 1}$.
    It follows that~${X \setminus B}$ contains a circuit~$O$ of size at most~${w}$ in~${M \slash (B \cap X)}$. 
\end{proof}


\begin{proof}[Proof of Lemma~\ref{lem:induction-step-possible} ]
    By applying Lemma~\ref{lem:lollipop-bridge} to~$W'$,~${Z := \{ z_i \colon i \in [\ell] \}}$, and~$C_\ell$, there are sets~${X \subseteq Z}$ and~${Y \subseteq C_\ell}$ such that 
    \begin{enumerate}
        [label=(\roman*)]
        \item\label{item:size-X} ${\abs{X} \geq w }$;
        \item\label{item:branch-depth-Y} ${W'[Y]}$ is connected and has branch-depth at least~${b+w}$;
        \item\label{item:connectivity-X} ${\lambda_{W'[X \cup Y]}(X) < w}$.
    \end{enumerate}

    By applying Lemma~\ref{lem:feasibility} to~${W'' := W'[X\cup Y]}$
    there exists a base~$B$ of $W''$ and a set~${O \subseteq X \setminus B}$ of size at most~${w}$ such that~$O$ is a circuit in~${(W'' \ast B) \slash (X \cap B)}$. 
    Now, for the restriction of that matroid to~${O \cup Y}$, which we call~$M$, it follows that~$O$ is a circuit of~$M$. 
    Note that since ${B \setminus E(M) = B \cap X}$, we get~${M = (W''[E(M)]) \ast (B \cap Y)}$ by Proposition~\ref{prop:twisted-minors}\ref{prop-item:compatible-minor}. 
    Since~${M|Y = W'[Y] \ast (B \cap Y)}$, it follows from~\ref{item:branch-depth-Y} that~$M$ has branch-depth at least~${b+w}$. 
    
    Let~${i \in [\ell]}$ be minimal such that~${z_i \in O}$. 
    Then we obtain~${O \setminus \{z_i\} \subseteq C_i}$. 
    Let~$\hat{B}$ be a base of~$M$ such that~${z_i \notin \hat{B}}$ and~$O$ is the fundamental circuit of~$z_i$ with respect to~$\hat{B}$. 
    
    Since the branch-depth of~${M \ast \hat{B}}$ equals the branch-depth of~$M$, by Lemma~\ref{lem:Xcomponent} 
    there is a component~$C$ of~${(M \ast \hat{B})[E(M) \setminus O]}$ of branch-depth at least~$b$. 
    
    Now ${\hat{F} := (B \cap E(M)) \triangle \hat{B}}$ is feasible with respect to~${W''[E(M)]}$ by Proposition~\ref{prop:twisted1}\ref{prop-item:feasible}, and since~${z_i \notin \hat{F}}$ we get~${\hat{F} \in \mathcal{F}(L_i)|C_i}$. 
    By Propositions~\ref{prop:twisted1}\ref{prop-item:fundamental-graph} 
    and~\ref{prop:twisted-minors}\ref{prop-item:restriction}, 
    \[
        {G(M, \hat{B}) = G(W'[E(M)] \ast \hat{F}) = G(L_i \ast \hat{F})[E(M)]}.
    \] 
    Hence, by our choice of~$\hat{B}$ the neighbourhood of~$z_i$ in~$G(L_i \ast \hat{F})$ is~${O \setminus z_i}$, which is disjoint from~$C$. 
    And since $(L_i \ast \hat{F})[C] = (M \ast \hat{B})[C]$, we obtain condition~\ref{item:step-assumption} of Corollary~\ref{cor:induction-step}, as desired. 
\end{proof}

\subsection{Proof of Theorem~\ref{thm:lollipops}}\label{subsec:proof}

We now prove Theorem~\ref{thm:lollipops}, which completes the proof of Theorem~\ref{thm:main-intro}. 

\begin{definition}\label{def:function}
    Given non-negative integers~$a$,~$b$,~$w$ and~$i$ with~${w \geq 2}$, we make the following definition, with~${\ell := 3w-2}$.
    \[
        g(a,b,w,i) := 
        b+(2w-1) + \left( (2w-1) \frac{(\ell^{a-1}-1)}{\ell-1} + 2\ell^{a-1} \right) (\ell-i).
    \]
    
    For each integer~${w > 2}$, 
    we define a function~${f_w \colon \mathbb{N}^2 \to \mathbb{N}}$ 
    by~${f_w(a,b) := g(a,b,w,0)}$.
\end{definition}

\begin{lemma}\label{lem:recursive-function}
    If~$a$,~$b$,~$w$, and~$i$ are non-negative integers with~${w \geq 2}$ and~${i < 3w-2}$, then
    \begin{enumerate}
        [label=(\roman*)]
        \item \label{lem-item:rec1} ${f_w(0,b) = b+2}$,
        \item \label{lem-item:rec2} ${g(a,b,w,3w-2) = b+2w-1}$,
        \item \label{lem-item:rec3} ${g(a+1,b,w,i) = f_w(a,g(a+1,b,w,i+1))}$, and 
        \item \label{lem-item:rec4} ${f_w(a,b) \leq b+3(3w-2)^a}$. 
    \end{enumerate}
\end{lemma}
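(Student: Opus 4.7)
The plan is to verify each of the four parts by direct computation from the explicit formula for $g(a,b,w,i)$, with $\ell := 3w-2$ throughout. Since everything reduces to algebraic manipulation, the real content is just to organize the bookkeeping; the only ``tricky'' part is (iii), where the recursion embedded in the definition needs to collapse, and this is where I would spend the most care.

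For (i), I would substitute $a=0$, $i=0$ into $g$, writing $\ell^{-1}$ literally. The coefficient of $\ell$ becomes
\[
    (2w-1)\frac{\ell^{-1}-1}{\ell-1} + 2\ell^{-1}
    = -\frac{2w-1}{\ell}+\frac{2}{\ell},
\]
so multiplying by $\ell$ and adding $b + (2w-1)$ gives $b+2$. For (ii), just substitute $i = \ell$: the factor $(\ell - i)$ vanishes, leaving $g(a,b,w,\ell) = b + (2w-1)$.

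For (iii), let $c := (2w-1)\frac{\ell^a-1}{\ell-1} + 2\ell^a$ and $c' := (2w-1)\frac{\ell^{a-1}-1}{\ell-1} + 2\ell^{a-1}$, so that
\[
    g(a+1,b,w,i) = b + (2w-1) + c(\ell-i),
    \qquad
    f_w(a,y) = y + (2w-1) + c'\ell.
\]
Then
\[
    f_w(a, g(a+1,b,w,i+1)) - g(a+1,b,w,i)
    = (2w-1) + c'\ell - c,
\]
and substituting the definitions of $c$ and $c'$ and collecting over the common denominator $\ell-1$ gives
\[
    (2w-1)\frac{(\ell-1) + \ell(\ell^{a-1}-1) - (\ell^a-1)}{\ell-1} + 2\ell^a - 2\ell^a,
\]
whose numerator telescopes to $0$. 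This is the key identity that shows the function $g$ was designed to realize the recursion needed by Corollary~\ref{cor:induction-step}, with Lemma~\ref{lem:induction-start} as the base case.

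Finally, for (iv), the $a=0$ case is immediate from (i) since $b+2 \leq b+3$. For $a\geq 1$, I would unfold
\[
    f_w(a,b)
    = b + (2w-1) + c'\ell
    = b + (2w-1)\frac{\ell^a-1}{\ell-1} + 2\ell^a,
\]
and then observe that for $w\geq 2$ one has $2w-1 \leq 3w-3 = \ell-1$, so $(2w-1)\frac{\ell^a-1}{\ell-1} \leq \ell^a-1 < \ell^a$, giving $f_w(a,b) \leq b + 3\ell^a = b + 3(3w-2)^a$. The hardest step is confirming the telescoping in (iii); the rest is routine once the right constants are named.
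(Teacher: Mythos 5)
Your proof is correct and follows essentially the same approach as the paper: direct algebraic verification from the explicit formula for $g$, with the same telescoping computation in (iii). Your treatment of (iv) is slightly more careful than the paper's in that you explicitly fall back on (i) for the $a=0$ case, where the bound $(2w-1)\frac{\ell^{a-1}-1}{\ell-1}\leq \ell^{a-1}-1$ would not apply directly.
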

\begin{proof}
    Define~${\ell := 3w - 2}$. 
    For~\ref{lem-item:rec1}, 
    \begin{align*}
        f_w(0,b) 
        &= g(0,b,w,0)
        = b+(2w-1)+\left( (2w-1) \frac{\ell^{-1}-1}{\ell-1} + 2\ell^{-1} \right)\ell\\
        &= b+(2w-1)+\left( (2w-1)\frac{1-\ell}{\ell-1}+2\right)=b+2.
    \end{align*}
    Property~\ref{lem-item:rec2} is trivial. 
    For~\ref{lem-item:rec3}, 
    \begin{align*}
        \lefteqn{f_w(a,g(a+1,b,w,i+1))}\\
        &= g(a+1,b,w,i+1)+(2w-1)+\left( (2w-1) \frac{\ell^{a-1}-1}{\ell-1} + 2\ell^{a-1} \right)\ell\\
        &= \left( g(a+1,b,w,i)-\left( (2w-1) \frac{\ell^{a}-1}{\ell-1} + 2\ell^{a} \right) \right) + (2w-1)\frac{(\ell-1)+(\ell^{a}-\ell)}{\ell-1}+2\ell^{a}\\
        &= g(a+1,b,w,i).
    \end{align*}
    For~\ref{lem-item:rec4}, note that~${2w-1 \leq \ell-1}$, and hence
    \begin{align*}
        b + (2w-1) + \left( (2w-1) \frac{\ell^{a-1}-1}{\ell-1} + 2\ell^{a-1} \right) \ell 
        &\leq b + \ell + (3\ell^{a-1}-1) \ell 
        = b + 3(3w-2)^a.
        \qedhere
    \end{align*}
\end{proof}

For convenience, we restate Theorem~\ref{thm:lollipops}.

\lollipops*

\begin{proof}
    We may assume without loss of generality that~$w$ is at least~$3$. 
    We will prove by induction on~$a$ that the theorem holds with~${d := f_w(a,b)}$ as in Definition~\ref{def:function}. 
    The base case is true by Lemma~\ref{lem:induction-start} and~Lemma~\ref{lem:recursive-function}\ref{lem-item:rec1}. 
    For the induction step, let~${\ell:=3w-2}$, and note that the premise of Lemma~\ref{lem:nested-lollipops} holds with the family~${(g_i \colon 0 \leq i \leq \ell)}$, where~${g_i}$ is defined as~${g(a+1,b,w,i)}$ as in Definition~\ref{def:function} for all~${i \in [\ell]}$, by Lemma~\ref{lem:branch-width-minor-closed}, Lemma~\ref{lem:recursive-function}, and the induction hypothesis. 
    Hence, Lemma~\ref{lem:induction-step-possible} together with Corollary~\ref{cor:induction-step} completes the proof. 
\end{proof}

\section{Consequences}\label{sec:consequences}

\subsection{Matroids representable over a fixed finite field}

Now we can prove Corollary~\ref{cor:main-intro-representable}, 
which we restate for the convenience of the reader. 

\mainthmintrorepresentable*

Since neither~$U_{2,q+2}$ nor~$U_{q,q+2}$ is representable over~$\field(q)$, 
we will instead show the following stronger corollary, implying Corollary~\ref{cor:main-intro-representable}.

\begin{corollary}\label{cor:main-intro-representable2}
    For any positive integers~$n$ and~$q$, 
    there is an integer~$d$
    such that every matroid having no minor isomorphic to~$U_{2,q+2}$ or~$U_{q,q+2}$
    with branch-depth at least~$d$ 
    contains a minor isomorphic to~${M(F_n)}$. 
\end{corollary}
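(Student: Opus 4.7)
The plan is to combine Theorem~\ref{thm:main-intro} with the grid theorem of Geelen, Gerards, and Whittle~\cite{GGW:large-branchwidth}. In the form relevant here, the grid theorem asserts that for every positive integer~$N$ and every prime power~$q$, there is an integer~${w = w(N,q)}$ such that every matroid with no~${U_{2,q+2}}$ and no~${U_{q,q+2}}$ minor and branch-width at least~$w$ contains at least one of~${M(K_N)}$,~${M^\ast(K_N)}$, or~${PG(N-1,q')}$ (for some prime power~${q' \leq q}$) as a minor. The idea is to take~$N$ large enough that each of these unavoidable minors itself contains~${M(F_n)}$ as a minor, and then plug the resulting~$w$ into Theorem~\ref{thm:main-intro}.

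First I would check that these three families of unavoidable minors contain~${M(F_n)}$ once~$N$ is sufficiently large in terms of~$n$. For~${M(K_N)}$ with~${N \geq n+1}$, the fan~$F_n$ is literally a subgraph of~${K_{n+1}}$, so~${M(F_n)}$ is a minor of~${M(K_N)}$. For~${M^\ast(K_N)}$, I would use planarity of~$F_n$: the planar dual~$F_n^d$ has at most~${n+1}$ vertices and therefore embeds as a subgraph of~${K_N}$ for~$N$ large, so~${M(F_n) = M^\ast(F_n^d)}$ is a minor of~${M^\ast(K_N)}$. Finally, for~${PG(N-1,q')}$, I would use that~${M(F_n)}$ is graphic and hence representable over every field, so it occurs as a restriction of any projective geometry of sufficiently large rank over any finite field.

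With~$N$ chosen so that all three unavoidable minors contain~${M(F_n)}$, I would fix ${w := w(N,q)}$ and then set~${d := 3(3w)^{2n}}$ as provided by Theorem~\ref{thm:main-intro}. Given a matroid~$M$ with no~${U_{2,q+2}}$ or~${U_{q,q+2}}$ minor and branch-depth at least~$d$, Theorem~\ref{thm:main-intro} yields one of two cases: either~$M$ already has an~${M(F_n)}$-minor, and we are finished, or the branch-width of~$M$ exceeds~$w$. In the latter case, combining the hypothesis on forbidden uniform minors with the grid theorem produces one of~${M(K_N)}$,~${M^\ast(K_N)}$,~${PG(N-1,q')}$ as a minor of~$M$, each of which in turn contains~${M(F_n)}$ by our choice of~$N$.

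There is no real technical obstacle here beyond citing the correct formulation of the grid theorem; the only content is the observation that the unavoidable grid-type minors all contain the fan matroid, for which the key point is the planarity of~$F_n$ handling the cographic case and the graphicness of~$F_n$ handling the projective-geometry case. Corollary~\ref{cor:main-intro-representable} then follows immediately, since~${\field(q)}$-representable matroids have neither~${U_{2,q+2}}$ nor~${U_{q,q+2}}$ as a minor.
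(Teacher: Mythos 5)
Your overall strategy --- combining Theorem~\ref{thm:main-intro} with a qualitative matroid grid theorem for the class of matroids with no $U_{2,q+2}$- or $U_{q,q+2}$-minor --- matches the paper's, but the formulation of the grid theorem you invoke is false. You claim that matroids in this class of large branch-width contain one of $M(K_N)$, $M^\ast(K_N)$, or $\mathrm{PG}(N-1,q')$ as a minor. Already for $q=2$ and $N=5$ this fails: the cycle matroid of the $m\times m$ grid is binary and has branch-width at least $m$, but it is graphic (hence has no $\mathrm{PG}(4,2)$-minor), its underlying graph is planar (hence it has no $M(K_5)$-minor), and its dual is the cycle matroid of the planar dual grid (hence it has no $M^\ast(K_5)$-minor). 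So large branch-width alone cannot force a clique, coclique, or projective-geometry minor; whatever trichotomy you have in mind needs a stronger connectivity hypothesis than bounded branch-width.

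The correct form of the theorem, which the paper states as Theorem~\ref{thm:grid-theorem2}, says that large branch-width (in the absence of $U_{2,q+2}$ and $U_{q,q+2}$) forces a minor isomorphic to the cycle matroid of the $n\times n$ grid. With that statement in hand, your three separate verifications become unnecessary: the fan $F_n$ is a graph minor of the $n\times n$ grid (take one boundary row as the rim and contract the rest of the grid to the hub), so $M(F_n)$ is a minor of the grid's cycle matroid, and the proof finishes in one line. Your checks that $M(K_N)$, $M^\ast(K_N)$, and $\mathrm{PG}(N-1,q')$ each contain $M(F_n)$ are essentially sound (though the planar dual of $F_n$ is a multigraph, so ``subgraph of $K_N$'' should be ``minor of $K_N$''), but they are solving a problem that does not arise once the grid theorem is quoted correctly.
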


The \emph{${m \times n}$ grid} is the graph with vertices~${\{ (i,j) \colon i \in [m], j \in [n]\}}$, where~${(i,j)}$ and~${(i',j')}$ are adjacent if and only if~${\abs{i-i'} + \abs{j-j'} = 1}$.
The above corollary is obtained by using 
the following theorem of Geelen, Gerards, and Whittle~\cite{GGW:large-branchwidth}, because 
the cycle matroid of the~${n \times n}$ grid contains~${M(F_n)}$ as a minor.

\begin{theorem}[Geelen, Gerards, and Whittle~\cite{GGW:large-branchwidth}*{Theorem~2.2}]\label{thm:grid-theorem2}
    For any positive integers $n$ and~$q$,
    there is an integer~${w(n,q)}$ 
    such that every matroid having no minor isomorphic to~$U_{2,q+2}$ or~$U_{q,q+2}$
    with branch-width at least~${w(n,q)}$     
    contains a minor isomorphic to the cycle matroid of the~${n \times n}$ grid. 
\end{theorem}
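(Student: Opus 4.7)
The plan is to follow the overall strategy of the Geelen--Gerards--Whittle matroid minors programme, which is a direct extension of Robertson and Seymour's graph minors work to matroids. The role of the excluded minors $U_{2,q+2}$ and $U_{q,q+2}$ is to play the part of a ``tameness'' hypothesis: without some such assumption, $U_{n,2n}$ itself realises arbitrarily large branch-width while containing no graphic minor, so some restriction on the ``uniform-like'' behaviour is essential. These two uniform minors are exactly what is needed to force every rank-$2$ flat to have boundedly many points and, dually, every corank-$2$ cocircuit to have boundedly many elements, placing $M$ in a class where the geometry is close to that of $\field(q)$-representable matroids.

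First, I would apply the tangle--branch-width duality for matroids to convert the hypothesis of branch-width at least $w(n,q)$ into the existence of a tangle $\mathcal{T}$ of large order in $M$. This matroid analogue of the graph tangle theorem is the correct substitute for ``highly connected substructure'' in the matroid setting. Next, I would invoke the structure theorem for matroids excluding $U_{2,q+2}$ and $U_{q,q+2}$: this asserts, roughly, that up to a bounded perturbation on the parts of $M$ captured by $\mathcal{T}$, the matroid is representable over $\field(q)$ (or, more precisely, is obtained from a $\field(q)$-representable matroid by a bounded number of template operations).

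Once inside a near-representable setting with a tangle of large order, the next step is to extract a suitable flower or clump structure inside $\mathcal{T}$: a collection of pairwise crossing separations arranged so that many pieces interact through high-rank connecting flats. Following the GGW programme, such a configuration in a $\field(q)$-representable matroid can be converted, after possibly passing to a minor and shifting to a graphic substructure, into the cycle matroid of a large wall, which contains the cycle matroid of the $n \times n$ grid as a minor; this is the analogue of the final ``grid from wall'' step in Robertson--Seymour.

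The main obstacle is the structure theorem in the second step. This is the deepest ingredient, and essentially the content of the multi-paper GGW matroid minors project: it requires tangles, branch decompositions, templates, and the eventual reduction of the excluded-minor class to representable matroids, and none of these reductions has a short combinatorial shortcut. The translation from the representable setting to an actual graphic minor in the third step is also non-trivial, since one must argue that a generic flower in a representable matroid can be ``realised graphically,'' again relying on deep matroid connectivity lemmas rather than direct graph-theoretic intuition.
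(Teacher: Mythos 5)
This theorem is not proved in the paper at all; it is cited as an external result of Geelen, Gerards, and Whittle~\cite{GGW:large-branchwidth} and used as a black box in Section~\ref{sec:consequences}. So there is no internal proof to compare your proposal against, and the real question is whether your sketch would work on its own terms.

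It would not, and the obstruction is circularity. Your second step invokes ``the structure theorem for matroids excluding $U_{2,q+2}$ and $U_{q,q+2}$,'' meaning the matroid minors structure theorem (bounded perturbation of a $\field(q)$-representable matroid, up to templates). That structure theorem postdates the 2007 grid theorem you are trying to prove by several years, and more importantly the grid theorem is one of the key ingredients in its proof; invoking it here begs the question. The same objection applies to the flower/clump-to-wall machinery you cite in step three, which in the GGW programme again presupposes the grid theorem. The actual proof in~\cite{GGW:large-branchwidth} never passes through a structure theorem. It begins where you begin---tangle--branch-width duality gives a tangle of large order, and excluding $U_{2,q+2}$ and $U_{q,q+2}$ bounds the number of points on rank-$2$ flats (and, dually, controls corank-$2$ cocircuits)---but from there it proceeds by a direct combinatorial construction, building up a grid-like configuration inside the tangle using connectivity and covering arguments that exploit the local finiteness, not by reducing to the representable case and applying later structural results. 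Your opening observations about the role of the two uniform exclusions and the tangle reformulation are accurate; the middle of the argument needs to be replaced wholesale by the direct construction of GGW 2007.
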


\begin{proof}[Proof of Corollary~\ref{cor:main-intro-representable2}]
    Let~${w := w(n,q)}$ given by Theorem~\ref{thm:grid-theorem2}. 
    Let~$d$ be the integer given by Theorem~\ref{thm:main-intro} for~$n$ and~$w$. 
    Since the cycle matroid of the ${n \times n}$ grid contains~${M(F_n)}$ as a minor, 
    we deduce the conclusion easily.
\end{proof}

\subsection{Quasi-graphic matroids}

Geelen, Gerards, and Whittle~\cite{quasi-graphic1} introduced 
the class of quasi-graphic matroids, 
which includes the classes of 
graphic matroids, 
bicircular matroids, 
frame matroids, 
and lift matroids. 
We will show that quasi-graphic matroids of large branch-depth contain large fan minors, as a corollary of Theorem~\ref{thm:main-intro}. 

Though the original definition 
of quasi-graphic matroids is due to Geelen, Gerards, and Whittle~\cite{quasi-graphic1}, we present the equivalent definition of Bowler, Funk, and Slilaty~\cite{quasi-graphic2}. 
Let~$G$ be a graph. 
A tripartition~${(\mathcal{B}, \mathcal{L}, \mathcal{F})}$ of cycles of~$G$ into possibly empty sets 
is called \emph{proper} if it satisfies the following properties. 
\begin{enumerate}
    [label=(\roman*)]
    \item $\mathcal{B}$ satisfies the \emph{theta property}: 
        if~$C_1$,~$C_2$ are two cycles in~$\mathcal{B}$ such that~${E(C_1) \triangle E(C_2)}$ is the edge set of a cycle~$C$, 
        then~$C$ is in~$\mathcal{B}$. 
    \item Whenever~$L$ is in~$\mathcal{L}$ and~$F$ is in~$\mathcal{F}$, there is at least one common vertex of~$L$ and~$F$. 
\end{enumerate}
A cycle is \emph{balanced} if it is in~$\mathcal{B}$
and \emph{unbalanced} otherwise. 
Let~$X$ be a subset of~${E(G)}$. 
If the subgraph~${G[X]}$ contains no unbalanced cycle, then we say that~$X$ and~${G[X]}$ are \emph{balanced}, and otherwise we say they are \emph{unbalanced}. 
A \emph{theta graph} is a subgraph consisting of three internally disjoint paths joining two distinct vertices.

For a graph~$G$ and a proper tripartition~$(\mathcal{B},\mathcal{L},\mathcal{F})$ of its cycles, 
we define a matroid ${M = M(G, \mathcal{B}, \mathcal{L}, \mathcal{F})}$ by describing its circuits as follows. 
A subset~$X$ of~${E(G)}$ is a circuit of~$M$ if and only if~$X$ is the edge set of one of the following: 
\begin{enumerate}
    [label=(\arabic*)]
    \item a balanced cycle, 
    \item a theta graph containing no balanced cycle, 
    \item the union of two edge-disjoint unbalanced cycles sharing exactly one vertex 
        (such a subgraph is called a \emph{tight handcuff}),  
    \item the union of two vertex-disjoint cycles in~$\mathcal{L}$, and
    \item the union of two vertex-disjoint cycles in~$\mathcal{F}$ and a minimal path joining these two cycles 
        (such a subgraph is called a \emph{loose handcuff}). 
\end{enumerate}

If~$\mathcal{B}$ contains every cycle of~$G$, then ${M = M(G)}$ is a graphic matroid. 
If~$\mathcal{L}$ is empty, then~$M$ is a \emph{frame} matroid, 
and if~$\mathcal{F}$ is empty, then~$M$ is a \emph{lift} matroid. 
If both~$\mathcal{B}$ and~$\mathcal{L}$ are empty, then~$M$ is a \emph{bicircular} matroid. 

\begin{proposition}\label{prop:branch-width-graphs-quasi-graphic}
    Let~$G$ be a graph, and let~${(\mathcal{B},\mathcal{L},\mathcal{F})}$ be a proper tripartition of the cycles of~$G$. 
    If~$G$ has branch-width at most~$w$, then the quasi-graphic matroid ${M := M(G,\mathcal{B},\mathcal{L},\mathcal{F})}$ has branch-width at most~${w+3 }$. 
\end{proposition}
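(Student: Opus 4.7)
The strategy is to reuse a branch-decomposition of $G$ as a branch-decomposition of $M$. Let $(T, \sigma)$ be a branch-decomposition of $G$ of width at most $w$; since $E(G) = E(M)$, the pair $(T, \sigma)$ is a valid candidate branch-decomposition of $M$. I will bound the matroid width of each internal edge by $w+3$. Fix such an edge $e$ with induced partition $(A_e, B_e)$ of $E(G)$, and let $V_{AB}$ denote the set of vertices of $G$ incident to edges in both $A_e$ and $B_e$; then $\abs{V_{AB}} \leq w$ by the graph's branch-decomposition. Since the matroid width at $e$ equals $\lambda_M(A_e) + 1$, it suffices to prove the key inequality $\lambda_M(A_e) \leq \abs{V_{AB}} + 2$.

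To establish this, I would use the rank function of quasi-graphic matroids. A basis of $M|X$ consists of a spanning forest of $G[X]$ augmented by at most one extra edge per unbalanced component, subject to the global constraint that two vertex-disjoint $\mathcal{L}$-cycles form a circuit of type~(4). This yields
\[
    r_M(X) \;=\; \abs{V(G[X])} - c(G[X]) + \nu(X),
\]
where $\nu(X) \geq 0$ encodes the unbalanced rank contribution. Substituting into $\lambda_M(A_e) = r_M(A_e) + r_M(B_e) - r_M(E(G))$ gives
\[
    \lambda_M(A_e) \;=\; \abs{V_{AB}} - \bigl[ c(A_e) + c(B_e) - c(G) \bigr] + \bigl[ \nu(A_e) + \nu(B_e) - \nu(G) \bigr].
\]
The first bracket is non-negative, since each component of $G$ is a union of components of $G[A_e]$ and $G[B_e]$, giving $c(A_e) + c(B_e) \geq c(G)$. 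Hence the key inequality reduces to showing that the second bracket is at most $2$.

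Bounding the second bracket is the main technical obstacle, and I would handle it by case analysis of the tripartition. When $\mathcal{F} = \emptyset$ (the lift case) or both $\mathcal{L}$ and $\mathcal{F}$ are nonempty (the mixed case), $\nu$ takes values in $\{0, 1\}$ and is monotone with respect to inclusion, so the second bracket is at most $1$; here the shared-vertex property of the proper tripartition, which forces all $\mathcal{L}$- and $\mathcal{F}$-cycles into a single component of $G$ whenever both are nonempty, plays a crucial role in verifying the rank formula. When $\mathcal{L} = \emptyset$ (the frame case), $\nu$ counts unbalanced components of $G[X]$, and a component-by-component analysis shows that the excess in $\nu(A_e) + \nu(B_e) - \nu(G)$ from splitting an unbalanced component of $G$ is dominated by the corresponding excess in $c(A_e) + c(B_e) - c(G)$, so the second bracket never exceeds the first. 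In all three subcases the bound $\lambda_M(A_e) \leq \abs{V_{AB}} + 2$ follows, establishing $\bw(M) \leq w + 3$.
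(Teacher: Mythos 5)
Your overall strategy matches the paper's exactly: reuse the graph branch-decomposition as a matroid branch-decomposition and bound $\lambda_M(A_e)$ via the Bowler--Funk--Slilaty rank formula. However, there are two interlocking errors in the reduction. First, the statement that ``the key inequality reduces to showing that the second bracket is at most~$2$'' is wrong: the quantity $\nu(A_e)+\nu(B_e)-\nu(G)$ is \emph{not} bounded by any constant, since when $G[A_e]$ contains an $\mathcal{F}$-cycle the correct value of $\nu(A_e)$ (to make your rank formula match $r_M(A_e)=\abs{V(A_e)}-b(A_e)$) is $c(A_e)-b(A_e)$, i.e.\ the number of \emph{unbalanced} components of $G[A_e]$, which can be arbitrarily large. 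Discarding the non-negative first bracket throws away exactly the term needed to absorb this; what you actually must show is that $\bigl[\nu(A_e)+\nu(B_e)-\nu(G)\bigr]-\bigl[c(A_e)+c(B_e)-c(G)\bigr]\le 2$, and indeed your own frame-case analysis compares the second bracket to the first rather than to~$2$. Second, and more seriously, the claim that in the mixed case ($\mathcal{L},\mathcal{F}$ both nonempty) ``$\nu$ takes values in $\{0,1\}$ and is monotone'' is false for the same reason: the value of $\nu(X)$ depends on whether $G[X]$ itself contains an $\mathcal{F}$-cycle, not on whether $\mathcal{F}$ is globally nonempty, and when it does, $\nu(X)=c(X)-b(X)$ need not be at most~$1$ nor monotone under edge-addition (merging two unbalanced components decreases it). The shared-vertex property puts all $\mathcal{L}$- and $\mathcal{F}$-cycles of $G$ into one component of $G$, but says nothing about how many unbalanced components $G[A_e]$ has.

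The fix, which is what the paper does, is to split cases not by which of $\mathcal{L},\mathcal{F}$ are globally empty, but by which of $G[A_e]$ and $G[B_e]$ contain an $\mathcal{F}$-cycle, since that is what the rank formula is sensitive to. When both sides contain $\mathcal{F}$-cycles, $r_M(X)=\abs{V(X)}-b(X)$ on both sides and on $E$, and $b(A_e)+b(B_e)\ge b(E)$ gives $\lambda_M(A_e)\le\abs{V_{AB}}$. When one or neither side contains an $\mathcal{F}$-cycle, the $\ell$-terms (at most one per side) produce the $+1$ or $+2$ slack, using $b(A_e)+c(B_e)\ge b(E)$ or $c(A_e)+c(B_e)\ge c(E)\ge b(E)$ as appropriate. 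Your lift-case and frame-case observations are correct and correspond to two of these subcases, but the mixed-case argument as written does not go through.
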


\begin{proof}
    We may assume that~$G$ has at least~$2$ edges. 
    Let~${(T,\sigma)} $ be a branch-decomposition of the graph~$G$ with width at most~$w$. 
    This means that whenever~$e$ is an edge of~$T$, there are at most~$w$ vertices incident with both sides of a partition~${(A_{e},B_{e})}$ of~${E(G)}$ induced by the components of~${T - e}$ under~$L^{-1}$. 
    We will demonstrate that~${\lambda_{M}(A_{e}) \leq w+2 }$ for every edge~$e$, and then~${(T,\sigma)} $ will certify the branch-width of~$M$ to be at most~${w+3 }$. 
    
    Let~$X$ be a subset of~${E(G)}$. 
    Let~${c(X)}$ denote the number of connected components in the subgraph~${G[X]}$, and let~${b(X)}$ denote the number of these components that are balanced. 
    Moreover, let~${\ell(X)}$ be~$1$ if~${G[X]}$ contains a cycle in~$\mathcal{L}$, and otherwise let~${\ell(X)}$ be~$0$. 
    The rank~${r_{M}(X)}$ is given by the formula~${\abs{V(X)} - b(X)}$ when~${G[X]}$ contains a cycle in~$\mathcal{F}$, and otherwise by~${\abs{V(X)} - c(X) +\ell(X)}$ \cite{quasi-graphic2}*{Lemma~2.4}. 
    
    Let~$n$ be the number of vertices in~$G$ and let~${E := E(G)}$. 
    Let~$n_{A}$ and~$n_{B}$ be the number of vertices in the subgraphs~${G[A_{e}]}$ and~${G[B_{e}]}$, so that~${n_{A} + n_{B} - n}$ is the number of vertices incident both with edges in~$A_{e}$ and edges in~$B_{e}$. 
    
    First assume that~$\mathcal{F}$ is non-empty. 
    Then~${r(M) = n - b(E)}$.
    We split into three subcases depending on whether both, one, or neither of~${G[A_{e}]}$ and~${G[B_{e}]}$ contain cycles in~$\mathcal{F}$. 
    Assume that both~${G[A_{e}]}$ and~${G[B_{e}]}$ contain cycles in~$\mathcal{F}$. 
    Any subgraph of a balanced subgraph is itself balanced, and it follows that~${b(A_{e}) + b(B_{e}) \geq b(E)}$. 
    Therefore
    \begin{align*}
        \lambda_{M}(A_{e}) 
        &= r_{M}(A_{e}) + r_{M}(B_{e}) - r(M) 
        = (n_{A} - b(A_{e})) + (n_{B} - b(B_{e})) - (n - b(E))
        \leq \abs{V(A_{e}) \cap V(B_{e})} \leq w    
    \end{align*}
    as desired. 
    Now assume that~${G[A_{e}]}$ contains a cycle in~$\mathcal{F}$ but that~${G[B_{e}]}$ does not. 
    In this case ${b(A_{e})+c(B_{e})\geq b(E)}$, so
    \[
        \lambda_{M}(A_{e}) 
        = (n_{A} - b(A_{e})) + (n_{B} - c(B_{e}) +\ell(B_{e})) - (n - b(E)) \leq \abs{V(A_{e}) \cap V(B_{e})}+\ell(B_{e})\leq w +1 . 
    \]
    If neither~${G[A_{e}]}$ nor~${G[B_{e}]}$ contains a cycle in~${\mathcal{F}}$, then since~${c(A_{e}) + c(B_{e}) \geq c(E) \geq b(E)}$, 
    we conclude that~${\lambda_{M}(A_{e}) \leq w+\ell(A_e)+\ell(B_e)\leq w+2}$. 
    
    Now we assume that~$\mathcal{F}$ is empty. Therefore 
    \[
        {r(M) = n - c(E) +\ell(E)},~ 
        {r(A_{e}) = n_{A} - c(A_{e}) +\ell(A_{e})}, 
        \textnormal{ and }
        {r(B_{e}) = n_{B} - c(B_{e}) +\ell(B_{e})}. 
    \]
    As~${c(A_{e}) + c(B_{e}) \geq c(E)}$, it follows easily that~${\lambda_M(A_{e}) \leq w + 2}$, and this completes the proof. 
\end{proof}

We will use the following grid theorem due to Robertson and Seymour. 
Note that in~\cite{graphminorsV} they proved this theorem in terms of tree-width, but in~\cite{graphminorsX} they established that graphs have small tree-width if and only if they have small branch-width, yielding the following version of the theorem. 

\begin{theorem}[Robertson and Seymour~\cite{graphminorsV}*{(2.1)} and \cite{graphminorsX}*{(5.1)}]\label{thm:grid}
    For any positive integer~$n$, there is an integer~$N(n)$ such that every graph of branch-width at least~${N(n)}$ contains a minor isomorphic to the~${n \times n}$ grid.
\end{theorem}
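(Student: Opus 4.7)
The plan is to prove the grid minor theorem by routing paths through a highly connected substructure forced by large branch-width. First I would invoke the well-known duality between branch-width and tangles (or equivalently brambles): a graph of branch-width at least~$k$ carries a tangle of order~$k$, i.e.\ a consistent orientation of all separations of order less than~$k$. Thus the hypothesis gives me, for~$N(n)$ chosen large enough, a tangle~$\mathcal{T}$ of very high order. Everything downstream is an effort to extract grid-like rigidity from the abundance of highly connected substructure that~$\mathcal{T}$ certifies.

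Next I would use the tangle to build a large \emph{wall} rather than attempting to build the grid directly. The elementary wall of order~$m$ (a planar subdivision-friendly cousin of the grid from which an $\lfloor m/2\rfloor\times\lfloor m/2\rfloor$ grid minor can be extracted) is the natural target because its weaker rigidity makes it accessible by iterated path-routing. I would construct the wall inductively: given a wall of order~$r$, use the tangle to produce many disjoint paths in a side of the wall that is~``large'' in~$\mathcal{T}$, then splice these paths into the wall to increase~$r$. The key combinatorial input at each step is a Menger-type theorem: if every separation of small order is oriented by~$\mathcal{T}$ away from the current wall, then the many disjoint paths must exist by counting considerations and the consistency of~$\mathcal{T}$.

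Once a wall of sufficiently high order is in hand, converting it to an~$n\times n$ grid minor is a direct topological argument: subdivided walls contain grids as minors after contracting carefully chosen edges, with a linear loss in the parameter.

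The hard part will be the quantitative control in the wall-building step. Each iteration has to route many disjoint paths while avoiding destroying the tangle-structure on the rest of the graph, and a naive argument forces~$N(n)$ to grow as a tower of exponentials in~$n$, which is essentially the bound Robertson and Seymour originally obtained. Any improvement (down to exponential, or to the polynomial bound of Chekuri–Chuzhoy) requires sophisticated routing that simultaneously tracks connectivity, congestion, and the relative position of paths with respect to the evolving tangle. For the purposes of invoking the theorem in the present paper, however, the existence of some finite bound~$N(n)$ is all that is needed, so the proof sketch above suffices even with its crude quantitative behaviour.
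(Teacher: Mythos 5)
The paper does not prove Theorem~\ref{thm:grid}; it invokes it as a black box, citing Robertson and Seymour's Graph Minors V and X, so there is no in-paper argument to compare your proposal against. Your sketch outlines the modern tangle-and-wall route: pass from large branch-width to a high-order tangle, build a wall by iterated disjoint-path routing against the tangle, then contract a subdivided wall to a grid. Note, though, that this is not the route in the cited source: Graph Minors V predates the introduction of tangles (which first appear in Graph Minors X), and the original proof works directly with tree-decompositions via an induction on the number of vertices; the tangle/bramble formulation you describe is closer to the later Robertson--Seymour--Thomas ``Quickly excluding a planar graph'' argument and to the treatments in Diestel's book and in Chekuri--Chuzhoy. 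The duality you invoke in your first step was in fact a \emph{consequence} of the machinery Graph Minors X develops in order to state results such as the one quoted here.

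More importantly, your proposal has a genuine gap at its centre. The phrase ``by counting considerations and the consistency of~$\mathcal{T}$'' conceals essentially all of the combinatorial content: the wall-building step \emph{is} the theorem, and neither Menger's theorem nor a counting argument, applied naively, yields the required system of crossing paths. One needs the iterated linkedness machinery (finding many disjoint paths between carefully chosen horizontal and vertical families, discarding the tangle-small side of each separation encountered, and controlling how the new paths interact with the wall already built) that occupies the bulk of any complete proof; your sketch identifies the scaffolding but leaves the load-bearing step unproved. You are right that for the purpose this theorem serves in the present paper only the existence of some finite~$N(n)$ matters, which is exactly why the authors cite it rather than prove it.
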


For a positive integer~$n$, let~$P_n^\circ$ be the graph obtained from the path on~$n$ vertices by adding one loop at each vertex.
By comparing circuits, it is easy to observe the following lemma.
\begin{lemma}\label{lem:bicircular-fan}
    For every positive integer~$n$, the bicircular matroid $M(P_n^\circ,\emptyset,\emptyset,\mathcal{C}_n^\circ)$ is isomorphic to~$M(F_n)$, where~$\mathcal{C}_n^\circ$ is the set of cycles of~$P_n^\circ$. \qed
\end{lemma}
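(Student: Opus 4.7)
The plan is to directly compare the circuits of both matroids under a natural bijection of their ground sets. Both matroids have $2n-1$ elements: the fan $F_n$ has $n$ "star edges" joining the central vertex to each vertex of the outer path and $n-1$ "path edges" joining consecutive vertices of the outer path, while $P_n^\circ$ has $n$ loops (one at each of its $n$ vertices) and $n-1$ path edges. I would fix the natural bijection sending the loop $\ell_i$ at $v_i$ to the star edge joining $v_i$ to the apex, and the path edge $v_iv_{i+1}$ of $P_n^\circ$ to the corresponding path edge of $F_n$.

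Next I would enumerate the cycles of $P_n^\circ$: since the non-loop part of $P_n^\circ$ is a tree, the only cycles are the $n$ individual loops, so $\mathcal{C}_n^\circ = \{\ell_1, \ldots, \ell_n\}$, and every cycle lies in $\mathcal{F}$. I would then run through the five types of circuits in the definition of a quasi-graphic matroid for $M(P_n^\circ, \emptyset, \emptyset, \mathcal{C}_n^\circ)$. There are no balanced cycles, hence no circuits of type~(1); there are no theta subgraphs, since loops are attached at a single vertex and so cannot participate in three internally disjoint paths between two distinct vertices, killing type~(2); no vertex carries two loops, so there are no tight handcuffs, killing type~(3); $\mathcal{L}$ is empty, killing type~(4); and the loose handcuffs (type~(5)) are precisely the edge sets of the form $\{\ell_i\} \cup \{v_kv_{k+1} : i \le k < j\} \cup \{\ell_j\}$ for $1 \le i < j \le n$.

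I would then observe that the cycles of the fan $F_n$ are exactly the edge sets obtained by picking two distinct star edges at vertices $u_i, u_j$ with $i < j$ and joining them through the unique $u_i$--$u_j$ subpath of the outer path, giving sets $\{cu_i\} \cup \{u_ku_{k+1} : i \le k < j\} \cup \{cu_j\}$. Under the bijection fixed above, this family of circuits coincides termwise with the loose handcuff circuits computed in the previous step, which establishes the isomorphism $M(P_n^\circ, \emptyset, \emptyset, \mathcal{C}_n^\circ) \cong M(F_n)$.

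There is no real obstacle; the only place requiring care is checking that no other types of quasi-graphic circuits appear in $P_n^\circ$, and in particular verifying that loops cannot form theta subgraphs or tight handcuffs, so that the loose handcuffs are the complete list of circuits.
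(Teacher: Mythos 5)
Your proof is correct and is exactly the approach the paper has in mind: the paper dismisses the lemma with the single sentence ``By comparing circuits, it is easy to observe the following lemma,'' and your write-up simply carries out that comparison in full. The bijection $\ell_i \mapsto cu_i$, $v_kv_{k+1}\mapsto u_ku_{k+1}$, the observation that the only cycles of $P_n^\circ$ are the $n$ loops (all placed in $\mathcal{F}$), the elimination of circuit types (1)--(4), and the match-up of loose handcuffs with the cycles of $F_n$ are all as intended.
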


\begin{proposition}\label{prop:quasi-branch-width}
    For every positive integer~${n}$,
    there is an integer~${w}$ 
    such that every quasi-graphic matroid with branch-width at least~$w$ 
    contains a minor isomorphic to~${M(F_n)}$. 
\end{proposition}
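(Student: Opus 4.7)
The plan is to combine Proposition~\ref{prop:branch-width-graphs-quasi-graphic} with Theorem~\ref{thm:grid} and then carry out a case analysis on the tripartition restricted to a large grid minor inside~$G$. Suppose $M = M(G,\mathcal{B},\mathcal{L},\mathcal{F})$ is a quasi-graphic matroid of branch-width at least~$w$. Choose $w := N(k)+3$ where $N(\cdot)$ is as in Theorem~\ref{thm:grid} and $k$ is large relative to~$n$ (the exact bound coming out of the case analysis below). By the contrapositive of Proposition~\ref{prop:branch-width-graphs-quasi-graphic}, $G$ has branch-width at least~$N(k)$, so Theorem~\ref{thm:grid} supplies a $k\times k$ grid~$H$ as a graph minor of~$G$. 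After restricting~$M$ to the edges corresponding to~$H$ (and treating $H$ as a subgraph at the cost of contracting path edges of subdivisions), we have a quasi-graphic matroid whose underlying graph is the grid~$H$, with the inherited tripartition on its cycles.

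Next, apply pigeonhole to the $(k-1)^2$ small $4$-cycles (faces) of~$H$, which each lie in exactly one of the three classes. A dense collection of faces lies in a single class, and by a further sub-grid argument we may assume an entire large sub-grid $H'$ has all of its small faces in that one class. In the $\mathcal{B}$-case, the theta property (applied to pairs of faces whose symmetric difference is again a cycle of~$H'$) propagates balancedness to every cycle of~$H'$, so the restriction of~$M$ to $E(H')$ is exactly the cycle matroid of~$H'$; since the grid contains the fan~$F_n$ as a graph minor, $M(F_n)$ appears as a minor of~$M$. In the $\mathcal{F}$-case, I would single out $n$ vertex-disjoint faces arranged along one row of~$H'$, contract a spanning tree of each face to collapse it to an unbalanced loop attached to a single vertex, delete unused edges, and obtain a minor whose underlying graph is $P_n^{\circ}$ with all loops in~$\mathcal{F}$; this is the bicircular matroid of~$P_n^{\circ}$, which is~$M(F_n)$ by Lemma~\ref{lem:bicircular-fan}. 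The $\mathcal{L}$-case requires a separate argument because the lift matroid of $P_n^{\circ}$ is not~$M(F_n)$; here I would instead exploit the properness condition (every $\mathcal{L}$-cycle meets every $\mathcal{F}$-cycle) together with the abundance of pairwise vertex-disjoint cycles in a large grid to either force $\mathcal{F}$ to be empty on the grid (reducing to a pure lift matroid where a direct construction of a balanced fan subgraph becomes possible after suitable deletions) or to detect enough balanced cycles to return to the $\mathcal{B}$-case on a smaller sub-grid.

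The main obstacle is the $\mathcal{L}$-case, where Lemma~\ref{lem:bicircular-fan} does not transfer verbatim, so a separate construction is needed to exhibit $M(F_n)$. A second technical point is to verify that the graph operations used in the reduction (taking a grid minor, contracting spanning trees of faces, deleting side edges) lift to valid matroid minor operations in the quasi-graphic setting; this relies on the minor-closure of quasi-graphic matroids and on understanding how the tripartition evolves under contraction of edges in unbalanced cycles, which is routine but must be done with care. Once the $\mathcal{L}$-case is settled by one of the strategies sketched above, combining the three cases with the chosen bound $w := N(k)+3$ completes the proof.
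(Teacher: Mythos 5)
Your opening moves (applying Proposition~\ref{prop:branch-width-graphs-quasi-graphic} and Theorem~\ref{thm:grid} to extract a large grid minor carrying an inherited proper tripartition, then handling $\mathcal{B}$ via the theta property and $\mathcal{F}$ via Lemma~\ref{lem:bicircular-fan}) are in the same spirit as the paper, but there are two genuine gaps, and the paper's argument is organised very differently precisely to avoid them.

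First, the reduction ``by a further sub-grid argument we may assume an entire large sub-grid $H'$ has all of its small faces in that one class'' is false as stated. Colour the face in position $(i,j)$ of a $k\times k$ grid by $(i+j)\bmod 3$; then no sub-grid with two or more faces is monochromatic, no matter how large $k$ is. Pigeonhole does give a colour of density at least $1/3$, but density alone cannot produce a monochromatic sub-grid (and even density $1-\varepsilon$ would not, by peppering a sparse pattern of exceptional faces). The paper sidesteps this entirely: after obtaining an $n^2\times n^2$ grid $G'$, it fixes a single $2\times n^2$ strip $G_1$ (the first two rows) and \emph{counts}. In $G_1$ any maximal run of consecutive balanced $4$-cycles has length at most $n-2$ (a longer run gives, via the theta property, a balanced $2\times n$ sub-grid whose cycle matroid contains $M(F_n)$), there are at most $n-1$ pairwise vertex-disjoint $4$-cycles in $\mathcal{F}'$ (else your $P_n^\circ$ construction together with Lemma~\ref{lem:bicircular-fan} applies), and once one shows that $\mathcal{L}'$ contains no $4$-cycle at all, these two bounds cannot account for the $n^2-1$ faces of $G_1$.

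Second, you correctly identify the $\mathcal{L}$-case as the crux but leave it unresolved, and the strategies you sketch do not obviously close it: properness alone does not let you ``force $\mathcal{F}$ to be empty on the grid'', and even in the pure lift case you would still need to exhibit $M(F_n)$. The paper's resolution is a short, sharp trick that fires as soon as there is \emph{one} $4$-cycle $\{c_1,c_2,c_3,c_4\}$ in $\mathcal{L}'$: contract $c_1,c_2,c_3$ so that $c_4$ becomes a loop in $\mathcal{L}''$ at some vertex $v$; by properness every cycle in $\mathcal{F}''$ meets every cycle in $\mathcal{L}''$, hence passes through $v$; this forces the matroid to equal the lift matroid $M(G'',\mathcal{B}'',\mathcal{L}''\cup\mathcal{F}'',\emptyset)$; and since for any $S$ the set $S\cup\{c_4\}$ is dependent exactly when $S$ contains a cycle of $G''/c_4$, contracting the loop yields precisely the graphic matroid $M(G''/c_4)$, which still contains a $2\times n^2$ grid and hence $M(F_{n})$. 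In other words, the $\mathcal{L}$-case is not a residual difficulty in the paper's proof---it is eliminated outright before the counting argument begins, which is what makes the remaining two-way dichotomy on a single strip tractable.
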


\begin{proof}
    We may assume that~${n > 2}$. 
    Let~${w := N(n^2) + 3}$ where $N(n^2)$ is the integer given in Theorem~\ref{thm:grid}.

    Let~${M = M(G,\mathcal{B},\mathcal{L},\mathcal{F})}$
    be a quasi-graphic matroid of branch-width at least~$w$. 
    Assume for a contradiction that~$M$ does not have a minor isomorphic to~${M(F_{n})}$. 
    
    By Proposition~\ref{prop:branch-width-graphs-quasi-graphic}, $G$ has branch-width at least~$N(n^2)$. 
    By Theorem~\ref{thm:grid}, $G$ has a minor~$G'$ isomorphic to the ${n^2 \times n^2}$ grid. 
    We may assume that $G'$ is equal to the ${n^2 \times n^2}$ grid. 
    
    As~$G'$ is obtained from~$G$ by deleting edges and contracting non-loop edges, it follows immediately from \cite{quasi-graphic2}*{Theorem~4.5} that there is a proper tripartition~${(\mathcal{B}', \mathcal{L}', \mathcal{F}')}$ of the cycles of~$G'$ such that~${M' := M(G', \mathcal{B}', \mathcal{L}', \mathcal{F}')}$ is a minor of~$M$. 
    
    First assume~$\mathcal{L}'$ contains a cycle of length~$4$ with edge set~${\{c_{1},c_{2},c_{3},c_{4}\}}$. 
    By~\cite{quasi-graphic2}*{Theorem~4.5}, ${M'' := M' \slash \{c_{1},c_{2},c_{3}\}}$ is a quasi-graphic matroid. 
    Let ${G'' := G' \slash \{c_{1},c_{2},c_{3}\}}$, and let~${(\mathcal{B}'',\mathcal{L}'',\mathcal{F}'')}$ be a proper tripartition of the cycles of~$G''$ so that~${M'' = M(G'',\mathcal{B}'',\mathcal{L}'',\mathcal{F}'')}$. 
    Again by \cite{quasi-graphic2}*{Theorem~4.5} we may assume that the cycle with edge set~$\{c_4\}$ is in~$\mathcal{L}''$. 
    Let~$v$ be the vertex of $G''$ that is incident with~$c_{4}$. 
    By definition, every cycle in~$\mathcal{F}''$ contains~$v$. 
    This means that~${M'' = M(G'',\mathcal{B}'', \mathcal{L}'' \cup \mathcal{F}'', \emptyset)}$ (see~\cite{quasi-graphic2}*{Section~2.3}). 
    For~${S \subseteq E(M'' \slash c_4)}$, the set~${S \cup \{c_4\}}$ is dependent in~$M''$ if and only if~$S$ contains the edge set of some cycle of~${G'' \slash c_4}$, so~${M''\slash c_4 = M(G'' \slash c_4) = M(G' \slash \{c_{1},c_{2},c_{3},c_{4}\})}$. 
    As~$n^2$ is greater than four, it follows that for some~${\alpha \in [n^2]}$ the graph $G''[\{(i,j) \colon i \in \{\alpha, \alpha+1\},~j \in [n^2]\}]$ is a subgraph of~${G'' \slash c_{4}}$ isomorphic to the ${2 \times n^2}$ grid. 
    By contracting the edges in the path~${(\alpha,1)(\alpha,2)\cdots (\alpha,n^2)}$, we obtain a minor isomorphic to~$F_{n^2}$. 
    Now~${n^2 > n}$ implies that~$M$ has a minor isomorphic to~${M(F_{n})}$, a contradiction. 
    Therefore~$\mathcal{L}'$ contains no cycle of length~$4$. 
    
    Consider the subgraph~${G_1 := G'[\{(i,j) \colon i \in \{1,2\},~j \in [n^2] \}]}$. 
    If~$G_1$ contains~$n$ vertex-disjoint cycles of length~$4$ in $\mathcal{F}'$, 
    then~$M'$ has a minor isomorphic to ${M(P_n^\circ,\emptyset, \emptyset, \mathcal{C}_n^\circ)}$, where~$\mathcal{C}_n^\circ$ is the set of cycles of $P_n^\circ$, contradicting our assumption by Lemma~\ref{lem:bicircular-fan}.
    
    Since the ${2 \times n}$ grid contains~$F_n$ as a minor, by our assumption, 
    any sequence of consecutive balanced cycles of length~$4$ in~$G_1$ contains at most~${n-2}$ such cycles.
    As~$G_1$ contains no cycles of length~$4$ in~$\mathcal{L}'$, and at most~$n-1$ 
    vertex-disjoint cycles of length~$4$ in~$\mathcal{F}'$, it follows that~$G_1$ contains at most~${(n - 2)n +  2(n-1) = n^{2} - 2}$ cycles of length~$4$.
    This is impossible, as~$G_1$ has at least~${n^2-1}$ cycles of length~$4$. 
\end{proof}

Now it is routine to combine Proposition~\ref{prop:quasi-branch-width}
with Theorem~\ref{thm:main-intro} to deduce the following result. 

\begin{corollary}\label{cor:quasigraphic}
    For every positive integer~${n}$, 
    there is an integer~${d}$ 
    such that every quasi-graphic matroid with branch-depth at least~$d$ 
    contains a minor isomorphic to~${M(F_n)}$. \qed
\end{corollary}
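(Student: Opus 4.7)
The plan is to combine Proposition~\ref{prop:quasi-branch-width} with Theorem~\ref{thm:main-intro} in the obvious way. Given a positive integer $n$, let $w = w(n)$ be the integer provided by Proposition~\ref{prop:quasi-branch-width}, so that every quasi-graphic matroid of branch-width at least $w$ contains a minor isomorphic to $M(F_n)$. Then set $d := 3(3w)^{2n}$ according to Theorem~\ref{thm:main-intro}.

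Now let $M$ be a quasi-graphic matroid of branch-depth at least $d$. By Theorem~\ref{thm:main-intro}, either $M$ has a minor isomorphic to $M(F_n)$ (in which case we are done), or $M$ has branch-width strictly greater than $w$. In the latter case, since $M$ is itself quasi-graphic and has branch-width at least $w$, Proposition~\ref{prop:quasi-branch-width} applies directly to $M$ and yields the desired $M(F_n)$ minor.

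The only point worth mentioning is that we apply Proposition~\ref{prop:quasi-branch-width} to $M$ itself rather than to a minor, so we do not need any minor-closedness of the quasi-graphic class at this final stage. The combination is therefore immediate and there is no genuine obstacle beyond bookkeeping of the two integer parameters $w$ and $d$.
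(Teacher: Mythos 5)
Your proposal is correct and spells out exactly the routine combination the paper intends (the paper simply states "Now it is routine to combine Proposition~\ref{prop:quasi-branch-width} with Theorem~\ref{thm:main-intro}" and gives no further detail). Your remark that Proposition~\ref{prop:quasi-branch-width} is applied to $M$ itself, so no minor-closedness of quasi-graphic matroids is needed here, is a sensible observation consistent with the paper's intent.
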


\subsection{General matroids}

The following conjecture about branch-width is due to Johnson, Robertson, and Seymour. 

\begin{conjecture}[Johnson, Robertson, and Seymour; see~\cite{excluding-uniform}*{Conjecture~6.1}]
    \label{conj:grid-theorem}
    For every positive integer~${n}$, 
    there is an integer~${d}$ 
    such that every matroid of branch-width at least~$d$ 
    contains a minor isomorphic to either 
    \begin{itemize}
        \item the cycle matroid of the ${n \times n}$ grid; 
        \item the bicircular matroid of the ${n \times n}$ grid; 
        \item the dual of the bicircular matroid of the ${n \times n}$ grid; or
        \item the uniform matroid~$U_{n,2n}$. 
    \end{itemize}
\end{conjecture}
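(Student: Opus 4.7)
The plan is to attack this conjecture by first converting the branch-width hypothesis into a tangle of large order, and then running a structural dichotomy based on the presence or absence of a large uniform minor. A matroid of branch-width at least $d$ admits a tangle of order roughly $d$, which is a consistent orientation of all low-order separations; this tangle encodes the location of the ``dense part'' of the matroid and is the right combinatorial object on which to work.

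The first case split is whether the matroid contains $U_{n,2n}$ as a minor. If it does, we are done. Otherwise, every minor has bounded ``freeness'' (no rank-$k$ restriction is free on $2k$ elements for large $k$), and I would try to push this through to show that the tangle is controlled by some frame-like substructure. In the representable case the Geelen--Gerards--Whittle grid theorem (Theorem~\ref{thm:grid-theorem2}) already handles this and directly yields the cycle matroid of a grid, so the difficulty lies entirely in matroids that are not representable over any fixed field; here the uniform obstruction, rather than the $U_{2,q+2}$/$U_{q,q+2}$ obstructions, is what must be exploited.

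Next I would try to isolate, inside the tangle, one of three local frame types: a graphic-like frame (giving $M(n \times n \text{ grid})$), a bicircular-like frame consisting of a large collection of elements acting as unbalanced loops on a common skeleton (giving the bicircular grid), or the dual of such a configuration (giving its dual). To extract the actual grid minor I would mimic the lollipop/fundamental-graph technique of Theorem~\ref{thm:lollipops}: start from a long induced path in a fundamental graph and iteratively extend it in many parallel ``branches'', using the tangle to guarantee that successive extensions land in a consistent part of the matroid, so that the final object is a two-dimensional grid rather than only a one-dimensional fan.

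The main obstacle -- and the reason this conjecture remains open -- is that there is no analogue of the Geelen--Gerards--Whittle structure theorem for general matroids avoiding only $U_{n,2n}$. Without representability one loses the matroid minors project machinery, and the obstructions to being frame-like (which is what separates the two bicircular alternatives from the graphic alternative) are not understood. Developing a common generalization of frame matroids and their duals, together with a tangle-based local structure theorem forcing one of the four listed minors whenever the branch-width is large, is exactly where progress is needed, and I would expect most of the effort to go into that step rather than into the final extraction of a grid from the resulting structure.
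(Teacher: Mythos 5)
The statement you were asked about is a \emph{conjecture}, not a theorem: the paper attributes it to Johnson, Robertson, and Seymour (via Geelen's survey) and explicitly treats it as open. The paper supplies no proof of it; the only thing done with it is the short conditional argument at the end of Section~5, where Corollary~\ref{cor:quasigraphic} and Propositions~\ref{prop:path-to-fan} and~\ref{prop:fundamental-graph}\ref{rmk:dual-fundamental} are used to show that Conjecture~\ref{conj:grid-theorem} would imply Conjecture~\ref{conj:branchdepth}. So there is no ``paper proof'' to compare your attempt against.

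You have, correctly, not produced a proof either. Your write-up recognizes the conjecture as open and instead sketches a plausible programme (pass to a high-order tangle, split on whether $U_{n,2n}$ appears, then try to force a graphic, bicircular, or co-bicircular frame structure) and names the genuine obstacle: there is no analogue of the Geelen--Gerards--Whittle structure theory once one drops representability and keeps only the exclusion of a single uniform minor. That diagnosis is accurate and is essentially the reason the problem is regarded as hard. One small point worth flagging: the lollipop/fundamental-graph machinery of this paper is tailored to producing a one-dimensional fan (equivalently, a long induced path in a fundamental graph with small connectivity control), and it is not clear it scales to a two-dimensional grid; the paper uses the grid theorems (Theorem~\ref{thm:grid-theorem2}, Theorem~\ref{thm:grid}) as black boxes precisely because of this. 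So even in the representable or quasi-graphic settings, the grid-extraction step is imported rather than reproved, and your suggestion to ``mimic the lollipop technique in two dimensions'' is the part of your sketch that would need a genuinely new idea rather than an adaptation.
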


Note that all matroids appearing in this conjecture have large branch-width, making them necessary obstructions. 

Hence, Theorem~\ref{thm:main-intro} yields the following corollary.

\begin{corollary}
    Conjecture~\ref{conj:grid-theorem} implies Conjecture~\ref{conj:branchdepth}. 
\end{corollary}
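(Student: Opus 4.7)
The plan is to chain Theorem~\ref{thm:main-intro} with Conjecture~\ref{conj:grid-theorem}. Given a positive integer~$n$, I would first fix an integer~$N$ such that each of the four matroid families listed in the conclusion of Conjecture~\ref{conj:grid-theorem}, at parameter~$N$, contains $M(F_n)$ or $U_{n,2n}$ as a minor. Conjecture~\ref{conj:grid-theorem} then yields some~$w$, and setting $d := 3(3w)^{2n}$, Theorem~\ref{thm:main-intro} finishes the argument: any matroid of branch-depth at least~$d$ either contains $M(F_n)$ as a minor directly, or else has branch-width greater than~$w$, in which case the grid conjecture places us into one of the four designated families and hence supplies the required minor.

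The bulk of the work is verifying the four minor containments. For the cycle matroid of the $N\times N$ grid, any $N \geq n$ suffices: a $2 \times n$ subgrid with one row contracted to a single vertex is $F_n$, as in the proof of Corollary~\ref{cor:main-intro-representable2}. For $U_{N,2N}$, any $N \geq n$ suffices, by deleting and contracting $N-n$ elements each to reach $U_{n,2n}$. For the bicircular matroid of the $N\times N$ grid, a linear-size $N$ suffices: every cycle of the grid lies in $\mathcal{F}$ in the bicircular tripartition, so a $2 \times (n+1)$ subgrid supplies $n$ pairwise vertex-disjoint $4$-cycles in $\mathcal{F}$, and the quasi-graphic-minor construction from the proof of Proposition~\ref{prop:quasi-branch-width}, combined with Lemma~\ref{lem:bicircular-fan}, delivers an $M(F_n)$ minor.

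The genuinely delicate case, which I expect to be the main obstacle, is the dual of the bicircular matroid of the $N\times N$ grid. Write~$B_N^*$ for this matroid. Since branch-width is duality-invariant, Proposition~\ref{prop:branch-width-graphs-quasi-graphic} gives $\bw(B_N^*) \leq \bw(G_N) + 3 = O(N)$. The approach I would try is to bootstrap via Theorem~\ref{thm:main-intro} itself: if one can establish that the branch-depth of~$B_N^*$ grows faster than $(3 \bw(B_N^*))^{2n}$ in~$N$, then Theorem~\ref{thm:main-intro} applied directly to~$B_N^*$ forces an $M(F_n)$ minor for all sufficiently large~$N$. Proving the requisite branch-depth lower bound on~$B_N^*$, for example by exhibiting an explicit family of nested sets of large connectivity in the dual structure that preclude any shallow decomposition, is the step I expect to demand the most combinatorial work.
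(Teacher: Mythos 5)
Your overall plan (chain Theorem~\ref{thm:main-intro} with Conjecture~\ref{conj:grid-theorem}, then verify that each of the four families at a suitable parameter supplies the required minor) matches the paper, and your treatment of the cycle matroid of the grid, $U_{N,2N}$, and the bicircular matroid is workable, modulo a small arithmetic slip (a $2\times(n+1)$ subgrid contains only $\lceil n/2\rceil$ pairwise vertex-disjoint $4$-cycles, not $n$; you want a $2\times\Theta(n)$ strip).

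The genuine gap is in your handling of the dual of the bicircular matroid. Your proposed bootstrap cannot succeed: it asks for the branch-depth of $B_N^*$ to exceed $3(3\bw(B_N^*))^{2n}$, and since $\bw(B_N^*)=\Theta(N)$, this is a lower bound of order $N^{2n}$. But $B_N^*$ has only $\Theta(N^2)$ elements, so its branch-depth is $O(N^2)$ (indeed trivially at most the number of elements). For every fixed $n\ge 2$ the requested lower bound is therefore false, and for $n=1$ the conjecture is vacuous. So there is no $N$ at which the bootstrap would close, regardless of how much combinatorial work one invests. You correctly flagged this case as the delicate one, but the route you sketched is a dead end rather than merely hard.

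The paper sidesteps the issue entirely by dualising. Having shown that the bicircular matroid $B_N$ of a sufficiently large grid contains $M(F_n)$ as a minor (via Corollary~\ref{cor:quasigraphic}), it follows that $B_N^*$ contains $M(F_n)^*$ as a minor, so any matroid containing $B_N^*$ as a minor contains $M(F_n)^*$. The last ingredient is that $M(F_n)^*$ contains $M(F_{n-1})$ as a minor: by Proposition~\ref{prop:fundamental-graph}\ref{rmk:dual-fundamental} the fundamental graph of $M(F_n)^*$ with respect to $E\setminus B$ equals the fundamental graph of $M(F_n)$ with respect to $B$, which is a path on $2n-1$ vertices; deleting the two endpoints yields an induced path on $2n-3$ vertices starting and ending in $E\setminus B$, so Proposition~\ref{prop:path-to-fan} applies. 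Running the argument with $n$ replaced by $n+1$ then handles all four families. The paper also handles the bicircular and cycle-matroid cases uniformly via Corollary~\ref{cor:quasigraphic} (both grids are quasi-graphic with large branch-depth) rather than by ad-hoc constructions, but that is a cosmetic difference; the dual case is where your proposal needs to be rebuilt.
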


\begin{proof}
    
    Since both the cycle matroid and the bicircular matroid of the ${n \times n}$ grid are quasi-graphic and both have large branch-depth, by Corollary~\ref{cor:quasigraphic}, if a matroid~$M$ contains a minor isomorphic to one of them, it contains a minor of the fan matroid as well. 
    It follows that if a matroid~$M$ contains a minor isomorphic to the dual of the bicircular matroid of the ${n \times n}$ grid, then it contains the dual of the fan matroid.
    By Propositions~\ref{prop:path-to-fan} and~\ref{prop:fundamental-graph}\ref{rmk:dual-fundamental},~$M(F_n)^\ast$  contains a minor isomorphic to~$M(F_{n-1})$. 
\end{proof}


\section*{Acknowledgments} 
We wish to thank the reviewers for their many helpful suggestions. 
We are particularly grateful to the anonymous reviewer who took the time to determine the explicit bound in our main theorem from what was previously a somewhat opaque recursive definition.


\begin{bibdiv}
\begin{biblist}

\bib{quasi-graphic2}{article}{
   author={Bowler, Nathan},
   author={Funk, Daryl},
   author={Slilaty, Daniel},
   title={Describing quasi-graphic matroids},
   journal={European J. Combin.},
   volume={85},
   date={2020},
   pages={103062, 26},
   issn={0195-6698},
   review={\MR{4037634}},
   doi={10.1016/j.ejc.2019.103062},
}

\bib{brualdi}{article}{
   author={Brualdi, Richard A.},
   title={Comments on bases in dependence structures},
   journal={Bull. Austral. Math. Soc.},
   volume={1},
   date={1969},
   pages={161--167},
   issn={0004-9727},
   review={\MR{250914}},
   doi={10.1017/S000497270004140X},
}

\bib{branchdepth}{article}{
    author={DeVos, Matt},
    author={Kwon, O{-}joung},
    author={Oum, Sang{-}il},
    title={Branch-depth: generalizing tree-depth of graphs},
    journal={European J. Combin.},
    volume={90},
    date={2020},
    pages={103186, 23},
    issn={0195-6698},
    review={\MR{4129026}},
    doi={10.1016/j.ejc.2020.103186},
}

\bib{Dharmatilake1996}{article}{
   author={Dharmatilake, Jack S.},
   title={A min-max theorem using matroid separations},
   conference={
      title={Matroid theory},
      address={Seattle, WA},
      date={1995},
   },
   book={
      series={Contemp. Math.},
      volume={197},
      publisher={Amer. Math. Soc., Providence, RI},
   },
   date={1996},
   pages={333--342},
   review={\MR{1411694}},
   doi={10.1090/conm/197/02530},
}

\bib{excluding-uniform}{article}{
   author={Geelen, Jim},
   title={Some open problems on excluding a uniform matroid},
   journal={Adv. in Appl. Math.},
   volume={41},
   date={2008},
   number={4},
   pages={628--637},
   issn={0196-8858},
   review={\MR{2459453}},
   doi={10.1016/j.aam.2008.05.002},
}

\bib{ggrw}{article}{
   author={Geelen, J.},
   author={Gerards, B.},
   author={Robertson, N.},
   author={Whittle, G.},
   title={Obstructions to branch-decomposition of matroids},
   journal={J. Combin. Theory Ser. B},
   volume={96},
   date={2006},
   number={4},
   pages={560--570},
   issn={0095-8956},
   review={\MR{2232391}},
   doi={10.1016/j.jctb.2005.11.001},
}

\bib{quasi-graphic1}{article}{
   author={Geelen, Jim},
   author={Gerards, Bert},
   author={Whittle, Geoff},
   title={Quasi-graphic matroids},
   journal={J. Graph Theory},
   volume={87},
   date={2018},
   number={2},
   pages={253--264},
   issn={0364-9024},
   review={\MR{3742182}},
   doi={10.1002/jgt.22177},
}

\bib{GGK2000}{article}{
   author={Geelen, J. F.},
   author={Gerards, A. M. H.},
   author={Kapoor, A.},
   title={The excluded minors for ${\rm GF}(4)$-representable matroids},
   journal={J. Combin. Theory Ser. B},
   volume={79},
   date={2000},
   number={2},
   pages={247--299},
   issn={0095-8956},
   review={\MR{1769191}},
   doi={10.1006/jctb.2000.1963},
}

\bib{GGW:large-branchwidth}{article}{
   author={Geelen, Jim},
   author={Gerards, Bert},
   author={Whittle, Geoff},
   title={Excluding a planar graph from ${\rm GF}(q)$-representable
   matroids},
   journal={J. Combin. Theory Ser. B},
   volume={97},
   date={2007},
   number={6},
   pages={971--998},
   issn={0095-8956},
   review={\MR{2354713}},
   doi={10.1016/j.jctb.2007.02.005},
}

\bib{HM07}{article}{
   author={Hicks, Illya V.},
   author={McMurray, Nolan B., Jr.},
   title={The branchwidth of graphs and their cycle matroids},
   journal={J. Combin. Theory Ser. B},
   volume={97},
   date={2007},
   number={5},
   pages={681--692},
   issn={0095-8956},
   review={\MR{2344132}},
   doi={10.1016/j.jctb.2006.12.007},
}

\bib{KKLM-depth}{article}{
   author={Kardo\v{s}, Franti\v{s}ek},
   author={Kr\'{a}l', Daniel},
   author={Liebenau, Anita},
   author={Mach, Luk\'{a}\v{s}},
   title={First order convergence of matroids},
   journal={European J. Combin.},
   volume={59},
   date={2017},
   pages={150--168},
   issn={0195-6698},
   review={\MR{3546908}},
   doi={10.1016/j.ejc.2016.08.005},
}

\bib{krogdahl}{article}{
   author={Krogdahl, Stein},
   title={The dependence graph for bases in matroids},
   journal={Discrete Math.},
   volume={19},
   date={1977},
   number={1},
   pages={47--59},
   issn={0012-365X},
   review={\MR{543659}},
   doi={10.1016/0012-365X(77)90118-2},
}

\bib{obstructions}{article}{
   author={Kwon, O{-}joung},
   author={McCarty, Rose},
   author={Oum, Sang{-}il},
   author={Wollan, Paul},
   title={Obstructions for bounded shrub-depth and rank-depth},
   journal={J. Combin. Theory Ser. B},
   volume={149},
   date={2021},
   pages={76--91},
   issn={0095-8956},
   review={\MR{4208019}},
   doi={10.1016/j.jctb.2021.01.005},
}

\bib{MT07}{article}{
   author={Mazoit, Fr\'{e}d\'{e}ric},
   author={Thomass\'{e}, St\'{e}phan},
   title={Branchwidth of graphic matroids},
   conference={
      title={Surveys in combinatorics 2007},
   },
   book={
      series={London Math. Soc. Lecture Note Ser.},
      volume={346},
      publisher={Cambridge Univ. Press, Cambridge},
   },
   date={2007},
   pages={275--286},
   review={\MR{2252796}},
   doi={10.1017/CBO9780511666209.010},
}

\bib{NO2012}{book}{
   author={Ne\v{s}et\v{r}il, Jaroslav},
   author={Ossona de Mendez, Patrice},
   title={Sparsity},
   series={Algorithms and Combinatorics},
   volume={28},
   note={Graphs, structures, and algorithms},
   publisher={Springer, Heidelberg},
   date={2012},
   pages={xxiv+457},
   isbn={978-3-642-27874-7},
   review={\MR{2920058}},
   doi={10.1007/978-3-642-27875-4},
}

\bib{os2006}{article}{
   author={Oum, Sang{-}il},
   author={Seymour, Paul},
   title={Approximating clique-width and branch-width},
   journal={J. Combin. Theory Ser. B},
   volume={96},
   date={2006},
   number={4},
   pages={514--528},
   issn={0095-8956},
   review={\MR{2232389}},
   doi={10.1016/j.jctb.2005.10.006},
}

\bib{oxley}{book}{
   author={Oxley, James},
   title={Matroid theory},
   series={Oxford Graduate Texts in Mathematics},
   volume={21},
   edition={2},
   publisher={Oxford University Press, Oxford},
   date={2011},
   pages={xiv+684},
   isbn={978-0-19-960339-8},
   review={\MR{2849819}},
   doi={10.1093/acprof:oso/9780198566946.001.0001},
}

\bib{osw}{article}{
   author={Oxley, James},
   author={Semple, Charles},
   author={Whittle, Geoff},
   title={The structure of the 3-separations of 3-connected matroids},
   journal={J. Combin. Theory Ser. B},
   volume={92},
   date={2004},
   number={2},
   pages={257--293},
   issn={0095-8956},
   review={\MR{2099144}},
   doi={10.1016/j.jctb.2004.03.006},
}

\bib{graphminorsV}{article}{
   author={Robertson, Neil},
   author={Seymour, P. D.},
   title={Graph minors. V. Excluding a planar graph},
   journal={J. Combin. Theory Ser. B},
   volume={41},
   date={1986},
   number={1},
   pages={92--114},
   issn={0095-8956},
   review={\MR{854606}},
   doi={10.1016/0095-8956(86)90030-4},
}

\bib{graphminorsX}{article}{
   author={Robertson, Neil},
   author={Seymour, P. D.},
   title={Graph minors. X. Obstructions to tree-decomposition},
   journal={J. Combin. Theory Ser. B},
   volume={52},
   date={1991},
   number={2},
   pages={153--190},
   issn={0095-8956},
   review={\MR{1110468}},
   doi={10.1016/0095-8956(91)90061-N},
}

\bib{whitney}{article}{
   author={Whitney, Hassler},
   title={On the Abstract Properties of Linear Dependence},
   journal={Amer. J. Math.},
   volume={57},
   date={1935},
   number={3},
   pages={509--533},
   issn={0002-9327},
   review={\MR{1507091}},
   doi={10.2307/2371182},
}

\end{biblist}
\end{bibdiv}

%
%

\begin{aicauthors}
\begin{authorinfo}[jpg]
  J.~Pascal Gollin\\
  Discrete Mathematics Group\\
  Institute for Basic Science\\
  Daejeon, Korea\\
  pascalgollin\imageat{}ibs\imagedot{}re\imagedot{}kr \\
  \url{https://dimag.ibs.re.kr/home/gollin}
\end{authorinfo}
\begin{authorinfo}[kh]
  Kevin Hendrey\\
  Discrete Mathematics Group\\
  Institute for Basic Science\\
  Daejeon, Korea\\
  kevinhendrey\imageat{}ibs\imagedot{}re\imagedot{}kr \\
  \url{https://sites.google.com/view/kevinhendrey}
\end{authorinfo}
\begin{authorinfo}[dm]
  Dillon Mayhew\\
  School of Mathematics and Statistics\\
  Te Herenga Waka, 
  Victoria University of Wellington\\
  Wellington, New Zealand\\
  dillon\imagedot{}mayhew\imageat{}vuw\imagedot{}ac\imagedot{}nz\\
  \url{https://homepages.ecs.vuw.ac.nz/~mayhew/}
\end{authorinfo}
\begin{authorinfo}[so]
  Sang{-}il Oum\\
  \begin{tabular}{@{}p{0.4\textwidth}p{0.4\textwidth}}
  Discrete Mathematics Group & Department of Mathematical Sciences \\
  Institute for Basic Science & KAIST\\
  Daejeon, Korea & Daejeon, Korea
  \end{tabular}\\
  sangil\imageat{}ibs\imagedot{}re\imagedot{}kr \\
  \url{https://dimag.ibs.re.kr/home/sangil}
\end{authorinfo}
\end{aicauthors}

\end{document}